\documentclass[11pt, final, a4paper, reqno]{article}
\usepackage[utf8]{inputenc}
\usepackage{hyperref}
\usepackage{titlesec}
\usepackage{braket}
\usepackage{mathtools}
\usepackage[T1]{fontenc}
\usepackage{amsmath,amssymb,amsfonts,amsthm,shuffle,xargs}
\usepackage{dsfont}
\usepackage{tikz}
\usetikzlibrary{cd}
\usepackage{comment}
\usepackage{envmath}
\usepackage[a4paper,vmargin={3.5cm,3.5cm},hmargin={2.5cm,2.5cm}]{geometry}
\tikzstyle{guillpart} = [scale=0.5,baseline={(current bounding box.center)}]
\tikzstyle{guillfill}=[gray,opacity=0.5]
\tikzstyle{guillsep} = [ultra thick]
\tikzstyle{centerline} = [baseline={(current bounding box.center)}]
\tikzstyle{line}=[linewidth= 0.2pt,baseline={(current bounding box.center)}]

\newcommand{\setC}{\mathbb{C}}
\newcommand{\setN}{\mathbb{N}}
\newcommand{\setZ}{\mathbb{Z}}
\newcommand{\setR}{\mathbb{R}}


\newcommand{\EE}{\mathbb{E}}
\newcommand{\ii}{\textbf{i}}

\newcommand{\tr}{\text{Tr}}

\newcommand{\ee}{\boldsymbol{e}}


\newcommand{\vect}[2]{\begin{pmatrix}
		#1\\
		#2
\end{pmatrix}}


\newcommand{\covg}[2]{\mathrm{Cov}_{\mathrm{Gibbs}}\left(#1,#2\right)}

\newcommand{\covcg}[3]{\mathrm{Cov}_{\mathrm{Gibbs}}\left(#1,#2 \vert #3\right)}

\newcommand{\var}[1]{\mathrm{Var}\left(#1\right)}

\theoremstyle{plain}
\newtheorem{theorem}{Theorem}[section]
\newtheorem{property}{Property}[section]
\newtheorem{lemma}{Lemma}[section]

\newtheorem{corollary}{Corollary}[section]

\theoremstyle{definition}
\newtheorem{definition}{Definition}[section]
\newtheorem{rmk}{Remark}[section]

\newenvironment{assumptionp}[1]{
	
	\assumptionalt
}{\endassumptionalt}

\tikzstyle{guillpart} = [scale=0.5,baseline={(current bounding box.center)}]
\tikzstyle{guillfill}=[gray,opacity=0.5]
\tikzstyle{guillsep} = [ultra thick]
\tikzstyle{guilldot} = [dotted, ultra thick]
\tikzstyle{centerline} = [baseline={(current bounding box.center)}]

\title{One-dimensional discrete Gaussian Markov processes: Harmonic decomposition of invariant boundary conditions}
\author{Emilien Bodiot}

\begin{document}
	\maketitle
\begin{abstract}
	We study invariant boundary conditions for one dimensional discrete Gaussian Markov processes, basic toy models of spatial Markov processes in statistical mechanics. More precisely, we give a decomposition of boundary objects in a non trivial basis from the study of a meromorphic matrix-valued function $\Phi$ (inherent to the model) and its singularities. This provides a simple algorithm for the explicit computation of invariant measures. As an application, we give an "eigen" version of Szegő limit theorem for matrix valued trigonometric polynomials.
\end{abstract}
\tableofcontents

	\section{Introduction and results}
Gaussian Markov fields have been widely studied and arise in numerous research areas such as quantum field theory, statistical physics, statistics... For instance, continuous Gaussian Free Field appears in Liouville quantum gravity, see \cite{berestycki2021gaussian}, harmonic crystal in physics, and discrete Gaussian Markov fields in statistics, see \cite{rue2005gaussian}. Often, boundary conditions are taken to be periodic or deterministic (like zero boundary conditions). Rarely, invariant boundary conditions are studied.

In this article we will focus on the study of invariant boundary conditions for Gaussian Markov Processes on the lattice $\setZ$. Of course, it is well-known that most of interesting physical quantities, such as free energy and correlation functions, are easily and explicitly computable using Fourier transform when working with periodic boundary conditions (due to invariance by translation). Nevertheless, these global computations totally hide the Markov property and the associated rich structure from which the model inherits. Another method is to take advantage of the Markov property and look for invariant measures. However, their explicit expression requires to solve tedious non-linear equations, which is hard in practice. Nevertheless, this local approach also leads to the computation of the same interesting physical quantities. It is remarkable since we do not know any link between invariant measures and Fourier transform. 

The main objective of this article is to link these approaches together and, more importantly, to provide alternative computation tools and simple algorithms for the explicit calculation of invariant boundaries. To do so, we will investigate algebraic and recursive properties of the model and highlight its links with variety of other fields such as residue calculus, discrete harmonic analysis, Toeplitz analysis, etc... This will lead us to a decomposition of boundary objects in a non trivial basis from the study of a meromorphic matrix-valued function $\Phi$ (inherent to the model) and its singularities. This article may also be viewed as a warm up for forthcoming work on the higher dimensional case. 

After introducing and discussing the model, we expose the main results of this article. 
\subsection{Definitions and classical facts}
This section is devoted to define the model and expose classical computations and results. 
\subsubsection{Definition of the model}
\paragraph{The lattice $\setZ$.}In the whole article, $\setZ$ will be considered as a lattice composed of vertices $V$ and unoriented edges $E$,
$$V=\setZ$$
$$E=\lbrace \set{x,y}\subset V; \vert x-y\vert=1 \rbrace.$$
For any $e\in E$, we denote $L(e)=\min e$ the left vertex of $e$ and $R(e)=\max e$ the right vertex of $e$.
We say that $D$ is a (finite) domain  of $\setZ$ if $D$ is a (finite) subset of $E$. We denote its associated vertices $V(D)$, formally,
$$V(D)=\lbrace v\in V; v=L(e) \text{ or } v=R(e) \text{ for } e\in D\rbrace.$$
Moreover, we write $\partial D$ the set of its boundary vertices defined as the vertices which belong exactly to one edge in $D$, that is
$$\partial D=\lbrace v \in V(D); \exists! y\in V(D) \text{ s.t }  \set{v,y}\in E(D) \rbrace.$$ 
A partition of $D$ is a finite collection of non-empty domains $(D_i)_{i\in I}$ such that $D_i\cap D_j=\emptyset$ for $i\neq j$ and $\cup_{i\in I} D_i=D$. 

We say that $D$ is connected if $D$ is only composed of consecutive edges, formally $D$ is of form
$$D=\lbrace \lbrace L_D+k,L_D+k+1\rbrace \in E; k\in \set{0,\dots, R_D-L_D-1}\rbrace$$
for $L_D<R_D$ left and right vertices of $D$. Up to relabeling edges of $D$, we identify $D$ with
$$D=\lbrace \lbrace k,k+1\rbrace \in E; k\in \set{0,\dots, l(D)-1}\rbrace$$
and  $l(D)=R_D-L_D$ is the size of $D$.
\paragraph{Markov property on $\setZ$.}
We will only consider non-oriented Markov processes as we are, due to physical motivations, more interested in spatial processes rather than processes indexed by time. We recall the definition of the spatial Markov property.  
\begin{definition}[Spatial Markov]\label{markov}
	Let $(G,\mathcal{G})$ be a measurable space, $D$ a finite domain of $\setZ$ and $X^{(D)}=(X_v^{(D)})_{v\in  V(D)}$  a set of $(G,\mathcal{G})$-valued random variables on a probability space $(\Omega,\mathcal{F},\mathbb{P})$. We say that $X^{(D)}$ has the Markov property on $D$ if and only if for all partition $(D_i)_{i\in I}$ of $D$ and any bounded measurable function $h_i:G^{\vert D_i \vert}\to\setC$, we have
	$$\EE\left[\prod_{i\in I}h_i(X^{(D_i)})\middle\vert \sigma(X_v;v\in\cup_{i\in I}\partial D_i)\right]=\prod_{i\in I}\EE[h_i(X^{(D_i)})\vert \sigma(X_v;v\in \partial D_i)].$$
\end{definition}
\begin{rmk}
	If $D$ is not connected we can always write $D$ as a disjoint union of its connected components which induces a rewriting of $\partial D$ as a disjoint union of boundaries of its connected components. Then, $X^{(D)}$ can be decomposed in independent Markov processes defined on connected components of $D$. Therefore, without loss of generality, we restrict ourselves to the case of connected 
	domains $D=\lbrace \lbrace k,k+1\rbrace; k\in \set{0,\dots, P-1}\rbrace$ for the study Markov processes on $\setZ$. 
\end{rmk}
\paragraph{Gaussian weights and Gaussian boundary weights.}For all $d\in\setN^*$, $\alpha\in\setR_+^*$, $A\in M_{2d}(\setC)$ and $x$, $y\in\setC^d$, we denote
$$\ee_{\alpha,A}(x,y)\coloneqq\alpha \exp\left(-\frac{1}{2}\begin{pmatrix}
	x\\
	y
\end{pmatrix}^*A\begin{pmatrix}
	x\\
	y
\end{pmatrix}\right)$$
and for all $\beta\in\setR_+^*$ and $B\in M_d(\setC)$,
$$\ee_{\beta,B}'(x)=\beta\exp\left(-\frac{1}{2}x^*Bx\right),$$
(cf. equations \eqref{gweight} and \eqref{gbweight} for details). Constants $\alpha$ and $\beta$ do not play an important role in the model but we will keep them for potential renormalizations (see theorem \ref{propre} for instance).   Moreover, we will often refer to the block decomposition of $A\in M_{2d}(\setC)$,
\begin{equation}\label{blockdecompo}
	A=\begin{pmatrix}
		A_{LL} & A_{LR}\\
		A_{RL} & A_{RR}
	\end{pmatrix}
\end{equation}
with $A_{IJ}$ $(d\times d)$-square matrices  for all $I$ $J\in\set{L,R}$.

\paragraph{Gaussian Markov processes on $\setZ$.}
\begin{definition}[$1D$ Gaussian Markov Processes]\label{Gauss_Markov_Proc}
	Let $(\Omega, \mathcal{F},\mathbb{P})$ be a probability space. Let $d\in\setN^*$ and $D$ a connected domain of $\setZ$ of size $l(D)=P\in\setN^*$. A $\setC^d$-valued Gaussian Markov process on $D$  of weights $(\ee_{\alpha^{(k)},A^{(k)}})_{k=0}^{P-1}$ with boundary conditions $(\ee_{\beta_{L},B_{L}}',\ee_{\beta_{R},B_{R}}')$ is a collection of $\setC^d$-valued random variables $X^{(D)}=(X_k^{(D)})_{k=0}^{P}\in\setC^{d(P+1)}$, indexed by vertices of $D$, with density (with respect to product Lebesgue measure) given by
	\begin{align*}
		&g_{X^{(D)}}(x^{(D)})\\
		&=\frac{1}{Z_D(\alpha^{(\bullet)} ,A^{(\bullet)},\beta_L,B_L,\beta_R,B_R)} \ee_{\beta_{L},B_{L}}'(x_{0})\left(\prod_{k=0}^{P-1}\ee_{\alpha^{(k)},A^{(k)}}(x_{k},x_{k+1})\right)\ee_{\beta_{R},B_{R}}'(x_{P})\\
		&=\frac{\beta_L\beta_R\prod_{k=0}^{P-1}\alpha^{(k)}}{Z_D(\alpha^{(\bullet)} ,A^{(\bullet)},\beta_L,B_L,\beta_R,B_R)}\exp\left(-\frac{1}{2}(x^{(P)})^* Q^{(D)}_{(A^{(\bullet)}, B_L,B_R)}x^{(P)}\right)
	\end{align*}
	for all $x^{(D)}=(x_k^{(D)})_{k=0}^P\in\setC^{d(P+1)}$ with $\alpha^{(k)}\in\setR_+^*$ for all $k$, $\beta_L$, $\beta_R \in\setR_+^*$ and the matrices $(A^{(k)})_{k=0}^{P-1}$, $B_L$ and $B_R$ are chosen so that
	\begin{align}
		&Q^{(D)}_{(A^{(\bullet)}, B_L,B_R)}\nonumber\\
		&= \begin{pmatrix}
			B_L +A^{(0)}_{LL} & A^{(0)}_{LR} & 0 & 0 & \dots & 0\\
			A^{(0)}_{RL} & A^{(0)}_{RR}+A^{(1)}_{LL} & A^{(1)}_{LR} & 0 & \dots & 0\\
			0 & A^{(1)}_{RL} & A^{(1)}_{RR}+A^{(2)}_{LL} & A^{(2)}_{LR} & \ddots & 0\\
			\vdots & \ddots & \ddots & \ddots & \ddots & \vdots\\
			0 & \dots & 0 & A_{RL}^{(P-2)} & A^{(P-2)}_{RR}+A^{(P-1)}_{LL} & A^{(P-1)}_{LR}\\
			0 & 0 & \dots & 0 &A^{(P-1)}_{RL} & A^{(P-1)}_{RR}+B_R
		\end{pmatrix}\label{matrice_couplage}
	\end{align}
	is Hermitian and positive definite. 
	Moreover, the normalizing constant $Z_D(\alpha^{(\bullet)} ,A^{(\bullet)},\beta_L,B_L,\beta_R,B_R )$, the so-called partition function in statistical mechanics, is given by
	\begin{align}
		Z_D(\alpha^{(\bullet)} ,A^{(\bullet)},\beta_L,B_L,\beta_R,B_R )&=\int_{\setC^{d(P+1)}}\ee_{\beta_L,B_L}'(x_{0})\left(\prod_{k=0}^{P-1}\ee_{\alpha^{(k)},A^{(k)}}(x_{k},x_{k+1})\right)\ee_{\beta_R,B_R}'(x_{P})dx^{(D)}\\
		&=(2\pi)^{d(P+1)}\det \left(Q^{(D)}_{(A_\bullet , B_L,B_R)}\right)^{-1}\beta_L\beta_R\prod_{k=0}^{P-1}\alpha^{(k)}.\nonumber
	\end{align}
	
	We say that this process is homogeneous of weight $(\alpha,A)$ if $\alpha^{(k)}=\alpha$ and $A^{(k)}=A$ for all $k\in \lbrace 0,\dots,P-1\rbrace$.  
\end{definition}
It is clear, from the factorized form, that any Gaussian Markov Process has indeed the Markov property. This justifies, as long as we stick to finite dimension, defining Gaussian processes by their density rather than by their covariance matrices on which the Markov property is less easy to read.
\begin{rmk}
	Our processes are $\setC^d$-valued circularly symmetric ($e^{\ii\phi}X\sim X$ for any $\phi\in[-\pi,\pi[$) only to simplify discrete Fourier transform computations. We could have chosen $\setR^d$-valued processes and most of results remain the same (up to square roots on determinants). 
\end{rmk}
\begin{rmk}
	In the definition of Gaussian Markov Processes \ref{Gauss_Markov_Proc}, the only assumption on the matrices involved is $Q^{(D)}_{(A^{(\bullet)} , B_L,B_R)}$ being Hermitian and positive definite. This ensures that the random vector is indeed a multivariate Gaussian on $\setC^{d(P+1)}$. Of course, this global assumption implies local properties such as
	$$\begin{pmatrix}
		A^{(k-1)}_{RR} +A^{(k)}_{LL} & A^{(k)}_{LR}\\
		A^{(k)}_{RL} & A^{(k)}_{RR} +A^{(k+1)}_{LL}
	\end{pmatrix}$$
	being Hermitian and positive definite for $k\in\lbrace 1,\dots k-2\rbrace$. Nevertheless, this is not a property on $A^{(k)}$ itself but on $A^{(k)}$ surrounded by its neighbors $A^{(k-1)}$ and $A^{(k+1)}$. 
	In a view of giving a consistent algebraic structure to Gaussian weights, we are interested in considering $A^{(k)}$ (thus $\ee_{\alpha^{(k)},A^{(k)}}$) on its own. What property the matrices $(A^{(k)})_k$ should satisfy? Same question holds for $B_L$ and $B_R$. The assumption that $(A^{(k)})_k$, $B_L$ and $B_R$ are Hermitian and positive definite basically allows to do whatever we want in the "Gaussian world": marginalization, computing observables etc... Moreover, the (homogeneous) model admits a unique infinite volume Gibbs measure if the inverse of a certain function $\Phi$, defined in section \ref{phi}, is not singular on the unit circle, see section 13.3 of \cite{georgii2011gibbs}. This is automatic when $(A^{(k)})_k$, $B_L$ and $B_R$ are assumed to verify Hermitian and positive definite properties. We will therefore work with the following set.
\end{rmk} 

\paragraph{The set $H_d^+(\setC)$.}
For any $d\in\setN^*$, we introduce the following set 
\begin{equation}\label{herm_pos_def}
	H_d^+(\setC)=\lbrace M\in M_d(\setC); M\text{ is Hermitian positive definite}\rbrace.
\end{equation}
In this article, due to the latter discussion, we will only be considering matrices $A^{(k)}\in H_{2d}^+(\setC)$, and $B_L$, $B_R \in H_{d}^+(\setC)$.

\paragraph{Particular cases.} To highlight the previous assumption, here are some specific cases covered or not by the model.
\begin{itemize}
	\item Discretised Orstein-Uhlenbeck is a special case of the model we are considering. Let $d=1$ and take $m>0$. The Gaussian Markov process with weight $\ee_{\alpha,A}$ for some $\alpha\in\setR_+^*$ and
	$$A=\begin{pmatrix}
		1+\frac{m^2}{2} & -1\\
		-1 & 1 + \frac{m^2}{2}
	\end{pmatrix}\in H_2^+(\setC).$$
	is a discretised Orstein-Uhlenbeck. It corresponds to the Hamiltonian
	$$H(x)=\frac{1}{2}\sum_k(x_{k+1}-x_k)^2+\frac{m^2}{2}\sum_k x_k^2$$
	of a one-dimensional massive Gaussian free field of mass $m$, see \cite{friedli2017statistical} for instance.
	\item Discretised Brownian motion (or one-dimensional harmonic crystal) does not verify the positive definite assumption and is therefore not covered by the model. Indeed, discretised Brownian motion requires to work with following kernel
	$$\ee(x_k,x_{k+1}) \sim e^{-\frac{1}{2}\vert x_{k+1} - x_k \vert^2}=\exp\left({-\frac{1}{2}\vect{x_k}{x_{k+1}}^*\begin{pmatrix} 1 & -1\\
			-1 & 1
		\end{pmatrix}\vect{x_k}{x_{k+1}}}\right)$$
	and $\begin{pmatrix} 1 & -1\\
		-1 & 1
	\end{pmatrix}\notin H_2^+(\setC)$ since it is not full rank. However, not covering the Brownian case is not a problem as we are interested, in this article, in invariant probability measures (which discretised Brownian motion does not have).
\end{itemize}
\paragraph{Homogeneous case.} Except if the contrary is mentioned, we will stick to the homogeneous case. Let $l(D)=P\in\setN^*$ be the size of $D$. In that case, $X^{(D)}$, the partition function, and the matrix $Q^{(D)}_{(A_\bullet , B_L,B_R)}$ are simply denoted, respectively, $X^{(P)}$, $Z_P(\alpha,A,\beta_L,B_L,\beta_R,B_R)$ and $Q^{(P)}_{(A,B_L,B_R)}$. We say that $X^{(P)}$ is a (homogeneous) Gaussian Markov process of size $P$. 
\paragraph{The full rank assumption.} Remark that if $A_{LR}$ is not full rank, so is $A_{RL}$ and $x$ and $y\in\setC^d$ are only coupled by $\ee_{\alpha,A}(x,y)$ through $x_{\ker(A_{RL})^\perp}$ and $y_{\ker(A_{LR})^\perp}$, their orthogonal projection onto, respectively, $\ker(A_{RL})^\perp$ and $\ker(A_{LR})^\perp$. Therefore, in most of the results we adopt the following assumption.
\begin{assumptionp}{I}\label{full_rank} The matrix $A$ satisfies
	\begin{equation}
		\mathrm{rank}(A_{LR})=d.
	\end{equation}
\end{assumptionp}

Section \ref{non-invertible} is devoted to the extension of these results in the case where $A_{LR}$ is not full rank. This only makes computations much heavier and does not change the nature of the results.
\subsubsection{Classical motivations and the problem with boundaries}
Given a Gaussian Markov process of weight $\ee_{\alpha,A}$ with boundary conditions $(\ee_{\beta_L,B_L}',\ee_{\beta_R,B_R}')$, we could be interested in computing the following limit
\begin{equation}
	f(\alpha,A,\beta_L,B_L,\beta_R,B_R)=\lim_{P \to \infty}\frac{1}{P}\log Z_P(\alpha,A,\beta_L,B_L,\beta_R,B_R),
\end{equation}
the so-called free energy. Another quantity of interest is the correlation function in the limit $P \to \infty$. That is, for $n\leq P$ and for any $v_1,\dots, v_n\in V(D)$, the computation of
\begin{equation}
	\lim_{P\to\infty}\mathbb{E}_{B_L,B_R}[X_{v_1}^{(P)}\cdots X_{v_{n}}^{(P)}],
\end{equation}
which, for Gaussian processes reduces to the limit of the covariance matrix by Wick theorem.  Being able to define the model on the whole lattice $\setZ$ is also one of the main motivation when dealing with statistical physics problems. This point is particularly easy here since for an infinite dimensional Gaussian measure to exist, one only need to give its covariance operator. Therefore, the computation of the correlation function will lead to the existence of the associated infinite volume Gibbs measure.
\paragraph{The choice of boundary conditions.} Remark that, no matter which quantity we wish to compute, we face the problem of specifying the boundary conditions $B_L$ and $B_R$. As explained bellow, some choices are particularly efficient. Nevertheless, because we expect exponential decorrelation (see remark \ref{decrease}), these quantities should not be depending on the initial boundary settings we choose. The present paper discusses the similarities between two common boundary conditions: periodic and invariant boundary conditions. 
\subsubsection{Various tools of study}
Let us briefly present various ways to compute those quantities. 
\paragraph{Via Discrete Fourier Transform.} One common approach is to work with Gaussian Markov Processes on the discrete torus $\setZ/P\setZ$. Then, with the help of discrete Fourier transform, computations become easier.
\paragraph{Notation.}
Let $P\in\setN^*$, and $k\in\setN$. We write
\begin{equation}
	\omega_{P}^{k}\coloneqq e^{\frac{2\ii\pi k}{P}}.
\end{equation}
Under periodic boundary conditions, we have the following theorem which answers most of classical questions. 
\begin{theorem}\label{Fourier}
	Let $P\in\setN^*$, $\alpha\in\setR_+^*$, and $A\in H_{2d}^+(\setC)$. Let $X^{(P),\mathrm{per}}=(X_k^{(P),\mathrm{per}})_{k\in\setZ/P\setZ}$ be a $\setC^d$-valued Gaussian Markov process on $\setZ/P\setZ$ with weight $\ee_{\alpha,A}$. Denote for all $k\in \setZ/P\setZ$, the Fourier transform
	$$\widehat{X}_k^{(P)}=\frac{1}{\sqrt{P}}\sum_{i\in\setZ/P\setZ}X_i^{(P),\mathrm{per}}\omega_{P}^{ik}.$$
	Then, the r.v $(\widehat{X}_k^{(P)})_{k\in\setZ/P\setZ}$ are independent and each $\widehat{X}_k^{(P)}$ has density against Lebesgue measure
	$$\frac{1}{2\pi \det\Phi_A(\omega_P^k)^{-1}}\exp\left(-\frac{1}{2}x^*\Phi_{A}(\omega_{P}^{k})x\right)$$
	where $\Phi_A:\setC^*\to M_d(\setC)$ is a matrix-valued function built from blocks of $A$, non singular on the unit circle $S^1$, see \eqref{defphi}. We then have
	$$Z_P^{\mathrm{per}}(\alpha,A)=\alpha^P(2\pi)^{dP}\prod_{k=0}^{P-1}\det \Phi_{A}(\omega_P^k)^{-1}$$
	so that the free energy $f^{\mathrm{per}}(\alpha,A)$ is equal to
	\begin{equation}\label{free_energy_per}
		f^{\mathrm{per}}(\alpha,A)=\lim_{P\to\infty}\frac{1}{P}\log Z_P^{\mathrm{per}}(\alpha,A)=\log(\alpha (2\pi)^d)-\frac{1}{2\pi}\int_0^{2\pi}\log \det \Phi_A(e^{\ii\theta})d\theta.	
	\end{equation}
	Moreover, for any $k$, $l\in\setZ$ fixed,
	$$\lim_{P\to\infty}\EE[X_k^{(P),\mathrm{per}}X_l^{(P),\mathrm{per}}]=C_{l-k}(\Phi_A^{-1})$$
	where $C_n\left(\Phi_A^{-1}\right)$ is the $n$-th Fourier coefficient of $\Phi_A^{-1}$ for any $n\in\setZ$, see \eqref{fc}.
\end{theorem}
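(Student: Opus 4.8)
The plan is to diagonalise the periodic precision matrix by the discrete Fourier transform, after which every assertion becomes a Gaussian computation or a Riemann-sum limit. By construction the density of $X^{(P),\mathrm{per}}$ on $\setZ/P\setZ$ is proportional to $\prod_{k\in\setZ/P\setZ}\ee_{\alpha,A}(x_k,x_{k+1})$ with indices read modulo $P$, hence proportional to $\exp\!\left(-\tfrac12\sum_{k}\vect{x_k}{x_{k+1}}^{*}A\vect{x_k}{x_{k+1}}\right)$. First I would expand this quadratic form with the block decomposition \eqref{blockdecompo} and reindex the two shifted sums to rewrite it as $\sum_k x_k^{*}(A_{LL}+A_{RR})x_k+\sum_k\left(x_k^{*}A_{LR}x_{k+1}+x_{k+1}^{*}A_{RL}x_k\right)$.

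Next I would substitute the inversion formula $x_i=\tfrac1{\sqrt P}\sum_k\widehat x_k\,\omega_P^{-ik}$ and use the orthogonality relation $\sum_{i\in\setZ/P\setZ}\omega_P^{i(k-l)}=P\,\delta_{k,l}$: by Plancherel the diagonal part becomes $\sum_k\widehat x_k^{*}(A_{LL}+A_{RR})\widehat x_k$, while each shifted sum collapses onto a single Fourier mode and produces a factor $\omega_P^{\pm k}$, so that the whole quadratic form equals $\sum_{k}\widehat x_k^{*}\,\Phi_A(\omega_P^k)\,\widehat x_k$ with $\Phi_A$ the matrix-valued function of \eqref{defphi}. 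Since $A$ is Hermitian one has $A_{LR}^{*}=A_{RL}$, so each $\Phi_A(\omega_P^k)$ is Hermitian; moreover $v^{*}\Phi_A(e^{\ii\theta})v=w^{*}Aw$ for the nonzero vector $w=\vect{v}{e^{\pm\ii\theta}v}$ (the sign matching the convention of \eqref{defphi}), so the hypothesis $A\in H_{2d}^{+}(\setC)$ forces $\Phi_A(e^{\ii\theta})$ to be positive definite for every $\theta$, in particular non-singular on $S^1$ as claimed — and this is exactly what will make the integrands in the two limits continuous. The discrete Fourier transform is a unitary, hence (in the $\setC^d$-valued circularly symmetric setting) measure-preserving for the product Lebesgue measure, change of variables, so the joint density of $(\widehat X_k^{(P)})_k$ factorises over $k$; this yields both the independence and the per-mode density in the statement, and integrating mode by mode — each $\setC^d$ Gaussian integral being $(2\pi)^d\det\Phi_A(\omega_P^k)^{-1}$ — gives $Z_P^{\mathrm{per}}(\alpha,A)=\alpha^P(2\pi)^{dP}\prod_{k=0}^{P-1}\det\Phi_A(\omega_P^k)^{-1}$.

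The two limits are then Riemann sums. For the free energy, $\tfrac1P\log Z_P^{\mathrm{per}}(\alpha,A)=\log(\alpha(2\pi)^d)-\tfrac1P\sum_{k=0}^{P-1}g\!\left(\tfrac{2\pi k}{P}\right)$ with $g(\theta)=\log\det\Phi_A(e^{\ii\theta})$; since $\Phi_A(e^{\ii\theta})$ is positive definite, $g$ is continuous on $[0,2\pi]$, so the Riemann sum converges to $\tfrac1{2\pi}\int_0^{2\pi}g(\theta)\,d\theta$, which is \eqref{free_energy_per}. For the covariance, independence of the modes gives $\EE[\widehat X_k^{(P)}(\widehat X_l^{(P)})^{*}]=\delta_{k,l}\,\Phi_A(\omega_P^k)^{-1}$, and substituting $X_a^{(P),\mathrm{per}}=\tfrac1{\sqrt P}\sum_k\widehat X_k^{(P)}\omega_P^{-ak}$ and taking the expectation produces $\tfrac1P\sum_{k=0}^{P-1}\Phi_A(\omega_P^k)^{-1}\omega_P^{(b-a)k}$, the Riemann sum of the continuous function $\theta\mapsto\Phi_A(e^{\ii\theta})^{-1}e^{\ii(b-a)\theta}$, whose limit is the Fourier coefficient $C_{l-k}(\Phi_A^{-1})$ of \eqref{fc} (with $a=k$, $b=l$).

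None of the steps is deep: the computation is elementary once the Fourier diagonalisation is carried out carefully. The points that genuinely require attention are the bookkeeping in that diagonalisation — the reindexing of the shifted sums and matching the resulting powers $\omega_P^{\pm k}$ against the chosen convention \eqref{defphi} for $\Phi_A$ — the fact that a unitary change of variables leaves the underlying real $2dP$-dimensional Lebesgue measure invariant, and the a priori positive-definiteness of $\Phi_A$ on all of $S^1$ (not merely at the roots of unity), which is what guarantees continuity of the integrands and hence convergence of the two Riemann sums.
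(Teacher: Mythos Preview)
Your proof is correct and follows essentially the same approach as the paper: the paper's own proof is a one-line reference to a discrete Fourier change of variable and a Riemann sum, with the details deferred to section~\ref{fromtftomarkov}, where the block-circulant precision matrix $Q_{A,\mathrm{per}}^{(P)}$ is block-diagonalised by DFT into the multiplication operator $M^{(P)}(\Phi_A)$, exactly as you do by hand. Your write-up is in fact more explicit than the paper's on the bookkeeping (the reindexing of the shifted sums, the unitarity of the change of variables, and the positive-definiteness of $\Phi_A$ on $S^1$ ensuring continuity of the Riemann integrands), but the argument is the same.
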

\begin{proof}
	This follows from a discrete Fourier change of variable and \eqref{free_energy_per} is a Riemann sum. See section \ref{fromtftomarkov} or chapter 13 of \cite{georgii2011gibbs} for more details.
\end{proof}

\paragraph{The function $\Phi_A$.}
As we will see, the function $\Phi_A$, that appears in the previous theorem, plays an important role in the model and the whole section \ref{phi} is dedicated to its complete study.

\paragraph{Via Invariant Measure Approach.} Another classical approach is to take advantage of the Markov property and look for left (or right) invariant measure. For instance, that is finding $B_L$ such that
$$\int_{\setC^d}\ee_{\beta_L,B_L}'(x)\ee_{\alpha,A}(x,y)dx=\Lambda\ee_{\beta_L,B_L}'(y)$$
for some eigenvalue $\Lambda$. We start with the following definition.
\begin{definition}[Left and right Schur-invariant matrices]
	Let $A\in H_{2d}^+(\setC)$. A matrix $B_L\in H_d^+(\setC)$ is left Schur-invariant for $A$ if it satisfies the following non linear equation
	\begin{equation}\label{leftinv}
		B_L=A_{RR}-A_{RL}(B_L+A_{LL})^{-1}A_{LR}
	\end{equation}
	where we recognize a Schur complement. Similarly, a matrix $B_R\in H_d^+(\setC)$  is right Schur-invariant for $A$ if it satisfies
	\begin{equation}\label{rightinv}
		B_R=A_{LL}-A_{LR}(A_{RR}+B_R)^{-1}A_{RL}.
	\end{equation}
\end{definition}
Standard Gaussian computations give the following theorem. 
\begin{theorem}[Left and right eigen-boundary conditions]\label{propre}
	Let $\alpha\in\setR_+^*$, $A\in H_{2d}^+(\setC)$ and $B_L$ (resp. $B_R$) left Schur-invariant (resp. right Schur-invariant) for $A$. Then
	\begin{equation}
		\int_{\setC^d}\ee_{\beta_L,B_L}'(x)\cdot \ee_{\alpha,A}(x,y)dx=\alpha (2\pi)^d \det (B_L+A_{LL})^{-1} \ee_{\beta_L,B_L}'(y)
	\end{equation}
	for all $y\in\setC^d$. In that case, we say that $\ee_{\beta_L,B_L}'$ is a left eigen-boundary density for $\ee_{\alpha,A}$.  Similarly,
	\begin{equation}
		\int_{\setC^d}\ee_{\alpha,A}(x,y)\cdot\ee'_{\beta_R,B_R}(y)dy =\alpha (2\pi)^d \det (A_{RR}+B_R)^{-1} \ee'_{\beta_R,B_R}(x).
	\end{equation}
	for all $x\in\setC^d$. In that case, we say that $\ee_{\beta_R,B_R}'$ is a right eigen-boundary density for $\ee_{\alpha,A}$. Moreover, the partition function of any homogeneous Gaussian Markov process of weight $\ee_{\alpha,A}$ with boundary conditions $(\ee_{\beta_L,B_L}',\ee_{\beta_R,B_R}')$ on any finite component $D$ of size $P\in\setN^*$ is 
	\begin{equation}
		Z_P(\alpha,A,\beta_L,B_L,\beta_R,B_R)=\Lambda^{P}\kappa
	\end{equation}
	with the eigenvalue $\Lambda=\alpha (2\pi)^d \det (B_L+A_{LL})^{-1} =\alpha (2\pi)^d \det (A_{RR}+B_R)^{-1} $and $\kappa=\langle \ee_{\beta_L,B_L}',\ee_{\beta_R,B_R}'\rangle_{L^2(\setC^d)}=1$ (with the good choices of $\beta_L$ and $\beta_R$). Then,
	\begin{equation}\label{free_energy_eigen}
		f^{\mathrm{eigen}}(\alpha,A)\coloneqq f(\alpha,A,\beta_L, B_L,\beta_R,B_R)=\log\Lambda.
	\end{equation} 
\end{theorem}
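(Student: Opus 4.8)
The plan is to verify the two integral identities by a direct Gaussian computation, then deduce the partition-function formula by telescoping.

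First I would establish the left identity. Writing out the integrand,
\[
\int_{\setC^d}\ee_{\beta_L,B_L}'(x)\,\ee_{\alpha,A}(x,y)\,dx
=\alpha\beta_L\int_{\setC^d}\exp\!\left(-\tfrac12\Big(x^*(B_L+A_{LL})x+x^*A_{LR}y+y^*A_{RL}x+y^*A_{RR}y\Big)\right)dx,
\]
where I used the block decomposition \eqref{blockdecompo} to expand the quadratic form $\vect{x}{y}^*A\vect{x}{y}$. Since $B_L+A_{LL}\in H_d^+(\setC)$ (it is the top-left block of the positive-definite matrix $Q^{(P)}$, or more directly a sum of a positive-definite and a positive-definite matrix when $B_L$ is chosen Hermitian positive definite), I complete the square in $x$ by the substitution $x\mapsto x-(B_L+A_{LL})^{-1}A_{LR}y$ and apply the standard circularly-symmetric Gaussian integral $\int_{\setC^d}e^{-\frac12 x^*Mx}dx=(2\pi)^d\det(M)^{-1}$ for $M\in H_d^+(\setC)$. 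The leftover $y$-dependent exponent is $-\tfrac12 y^*\big(A_{RR}-A_{RL}(B_L+A_{LL})^{-1}A_{LR}\big)y$, and here I invoke the left Schur-invariance equation \eqref{leftinv} to recognize the bracket as $B_L$. This yields exactly $\alpha(2\pi)^d\det(B_L+A_{LL})^{-1}\,\ee_{\beta_L,B_L}'(y)$. The right identity is proved identically, integrating in $y$ instead, completing the square using $A_{RR}+B_R\in H_d^+(\setC)$, and using \eqref{rightinv}.

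Next, for the partition function: by Definition \ref{Gauss_Markov_Proc}, $Z_P$ is the iterated integral of $\ee_{\beta_L,B_L}'(x_0)\big(\prod_{k=0}^{P-1}\ee_{\alpha,A}(x_k,x_{k+1})\big)\ee_{\beta_R,B_R}'(x_P)$. Integrating successively over $x_0,x_1,\dots,x_{P-1}$ and applying the left identity $P$ times (each application pushes the factor $\ee_{\beta_L,B_L}'$ one step to the right and produces a scalar $\Lambda=\alpha(2\pi)^d\det(B_L+A_{LL})^{-1}$), I am left with $\Lambda^{P}\int_{\setC^d}\ee_{\beta_L,B_L}'(x_P)\ee_{\beta_R,B_R}'(x_P)\,dx_P=\Lambda^{P}\langle\ee_{\beta_L,B_L}',\ee_{\beta_R,B_R}'\rangle_{L^2(\setC^d)}$. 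Alternatively, telescoping from the right using the right identity gives the same value with eigenvalue $\alpha(2\pi)^d\det(A_{RR}+B_R)^{-1}$, so the two determinant expressions for $\Lambda$ agree. Choosing $\beta_L,\beta_R$ so that the inner product $\kappa$ equals $1$ (which is possible since $\ee_{\beta_L,B_L}'$ and $\ee_{\beta_R,B_R}'$ are, up to normalization, Gaussian densities, so their $L^2$ pairing is a positive finite constant that scales linearly in $\beta_L\beta_R$) gives $Z_P=\Lambda^{P}$, and hence $f^{\mathrm{eigen}}(\alpha,A)=\lim_{P\to\infty}\frac1P\log Z_P=\log\Lambda$, which is \eqref{free_energy_eigen}.

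The only subtle points, rather than genuine obstacles, are bookkeeping ones: checking that $B_L+A_{LL}$ and $A_{RR}+B_R$ are indeed invertible (immediate from positive-definiteness, but worth a sentence), making sure the substitution shifting $x$ by a $y$-dependent vector is legitimate over $\setC^d$ for a convergent Gaussian integral, and tracking the constants $\alpha,\beta_L,\beta_R$ and the factors of $2\pi$ through the telescoping so that the final normalization $\kappa=1$ is consistent. None of these require more than routine care; the equivalence of the two formulas for $\Lambda$ is the most conceptually satisfying output, and it falls out for free from doing the telescoping in both directions.
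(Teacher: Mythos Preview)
Your proof is correct and is essentially the same argument as the paper's: the paper defers to Properties~\ref{gluingboundwithweight}, \ref{gluingboundwithbound}, and \ref{partitionfunction}, which amount precisely to the complete-the-square Gaussian integral you carry out (yielding the Schur complement $S_L(B_L,A)$, recognized as $B_L$ via \eqref{leftinv}) followed by the same telescoping. The only cosmetic difference is that the paper packages the Gaussian computation as the algebraic products $m_L$, $m_R$ and the pairing of Property~\ref{gluingboundwithbound}, whereas you unfold it in place.
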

\begin{proof}
	This is a direct consequence of properties \ref{gluingboundwithweight}, \ref{gluingboundwithbound} and \ref{partitionfunction}, see section \ref{algebraicstructure} for algebraic details.
\end{proof}
This theorem first reduces the research of invariant measures to the understanding of the non-linear fix point relations \eqref{leftinv} and \eqref{rightinv}. Solutions are, in general, difficult to compute explicitly due to the non-linearity of these equations. Our main occupation will be to simplify these computations using a well-chosen decomposition. This theorem also shows that the computation of the partition function and the free energy is particularly simple under eigen-boundary conditions. Remark that, with this approach, the free energy is not a limit anymore.
\begin{rmk}\label{infinite_gibbs}
	Take $(D_n)_{n\in\setN}$ an increasing sequence of finite connected components such that 
	$$\bigcup_{n\in\setN}D_n=\setZ.$$
	The family of homogeneous Gaussian Markov processes $(X^{(D_n)})_{n\in\setN}$ of weight $\ee_{\alpha,A}$ with left and right eigen-boundary conditions defines a consistent family of probability measures. Kolmogorov extension theorem ensures the existence of a stochastic process $X$ on a certain probability space $(\Omega,\mathcal{F},\mathbb{P})$ whose marginal laws are the $X^{(D_n)}$. The law of $X$ is the infinite volume Gibbs measure associated to this problem.  
\end{rmk}  

\paragraph{What is the link?} Fourier transform and eigen-boundary conditions provide two methods for computing the same quantities. Nevertheless one doesn't easily see the link between these two approaches. Fourier transform is a global computation which totally hides local Markov property whereas invariant measure is a local computation based on the Markov property of the model. Moreover, Fourier transform provides explicit formulas (see theorem \ref{Fourier}) when invariant measure approach is based on equations \eqref{leftinv} and \eqref{rightinv}, which are difficult to solve explicitly. The main goal of this article is to reconcile these methods and to specify the underlying structure which describes the links that they share. By doing so, we will provide a simple algorithm for solving explicitly equations \eqref{leftinv} and \eqref{rightinv}.
\paragraph{A step towards higher dimension.} Establishing bridges between Fourier and eigen-boundary conditions for one dimensional Markov processes is interesting in itself but it becomes even more relevant in higher dimension. In higher dimension, Fourier transform approach keeps working and is well understood. Moreover, recent work from \cite{damien} uses operad theory to give a coherent notion of eigen-boundary ("invariant-boundary") conditions for higher dimensional Markov Processes. Due to geometrical properties, eigen-boundary objects are much more sophisticated than in dimension one and are not well understood yet. For higher dimensional Gaussian Markov processes, our understanding of Fourier method should be converted into the understanding of those new boundary objects via bridges between the two approaches. The present article can be viewed as a warm-up for upcoming work on higher dimensional Gaussian processes using this novel approach. 

\subsection{Main theorems and structure of the article}
We now expose the structure of the present paper. 
\subsubsection{Study of the meromorphic function $\Phi_A$}
As it can be seen in theorem \ref{Fourier}, the function $\Phi_A$ plays an important role: one can compute many quantities of interest from this single function. It is therefore fully studied in section \ref{phi}, which contains every relevant technical results. In particular, we will focus on the points of $\setC^*$ where $\Phi_A$  is not invertible, simply called the zeros of $\Phi_A$, see lemma \ref{zeros}. We will also construct two non trivial bases of $\setC^d$, indexed by zeros of $\Phi_A$, see definition \ref{basisindexedbyzeros}, along which many objects related to the model can be decomposed. This paper establishes a dictionary between zeros of  $\Phi_A$, together with their associated bases, and various probability laws of the model.

\subsubsection{From periodic boundary conditions to translation invariant Gaussian Markov on $\setZ$}
Section \ref{fromtftomarkov} gives more details for the periodic case and the structures behind theorem \ref{Fourier} obtained with Fourier transform. This is common material, for instance, see \cite{georgii2011gibbs}, chapter 13 for homogeneous Gaussian fields. We will compute marginal laws under thermodynamic limit, see \eqref{cvlaw}, and show that the Markov property is preserved, see theorem \ref{markov_prop}. This gives a natural candidate for the Hilbert space which "carries" the infinite volume Gibbs measure. 
\subsubsection{Markov and Schur complement: algebraic formulas}
Motivated by theorem \ref{propre}, geometric and algebraic structures of $(Z_P(\alpha,A))_{\alpha,A,P}$ or, equivalently, the ones given by weights $(\ee_{\alpha,A})_{\alpha,A}$, are studied in section \ref{algebraicstructure}. There is a natural associative product $m$ which allows to glue them together. Moreover, there is a non linear product $S$ at the level of Hermitian matrices  and a matrix $K$ such that we have the lift
$$m(\ee_{\alpha,A},\ee_{\Tilde{\alpha},\Tilde{A}})=\ee_{(2\pi)^d\alpha\Tilde{\alpha}\det K,S(A,\Tilde{A})},$$
see property \ref{schur_prod}. We will also have a look at algebraic properties of Gaussian boundary densities $\ee_{\beta,B}'$ on which Gaussian weights $\ee_{\alpha,A}$ naturally act. 

\subsubsection{Eigen-boundary conditions}
Section \ref{eigen-bound} focuses on the study of eigen-boundary conditions. In view of a higher dimension generalization, we wish to give a "dimension-free" recipe for the computation of $B_L$ and $B_R$ satisfying the non linear fix point relations \eqref{leftinv} and \eqref{rightinv}. This section aims at a better understanding of these equations and especially, at their translation in terms of $\Phi_A$(to establish links with Fourier). We briefly recall that we denote $\mathcal{S}(\Phi_A)=\mathcal{S}_{<1}(\Phi_A)\sqcup \mathcal{S}_{>1}(\Phi_A)$ the set of zeros of $\Phi_A$ inside and outside the unit disk, see lemma \ref{zeros} and equation \eqref{zerosinsideandoutside}. To each of these sets is associated a non-trivial basis of $\setC^d$, denoted $(u_w)_{w\in\mathcal{S}_{<1}(\Phi_A)}$ and $(u_w)_{w\in\mathcal{S}_{>1}(\Phi_A)}$, see definition \ref{basisindexedbyzeros}. 
We now expose the main theorem of the paper, proven in section \ref{eigen-bound}.
\begin{theorem}\label{main} Let $A\in H_{2d}^+(\setC)$ and $W_{>1}\in M_d(\setC)$ any diagonalizable matrix. Under assumptions \ref{full_rank} and \ref{multiplicite}, the following assertions are equivalent
	\begin{itemize}
		\item[(i)] The matrix $B_L=A_{RR}+A_{RL}W_{>1}^{-1}$ is left Schur-invariant for $A$.
		\item[(ii)] The matrix $W_{>1}$ is diagonal in the basis $(u_w)_{w\in\mathcal{S}_{>1}(\Phi_A)}$, with $W_{<1}u_w=wu_w$ for all $w\in\mathcal{S}_{>1}$.
		\item[(iii)] Let $\boldsymbol{x}\in\setC^d$. The sequence $(x_k)_{k=0}^{-\infty}$ with $x_k=W_{>1}^{-k}x_0$ is solution to disrete Dirichlet-type problem
		\begin{System}\label{dirich_1}A_{RL}x_{-k-1}+(A_{LL}+A_{RR})x_{-k}+A_{LR}x_{-k+1}=0 \text{ for all }k> 0\\
			x_0=\boldsymbol{x}\\
			\lim_{k\to -\infty}x_k=0
		\end{System}
	\end{itemize}
	Moreover, in those cases, the eigenvalue associated to $\ee'_{\beta_L,B_L}$ is 
	\begin{equation}\label{lambda}
		\Lambda=\alpha(2\pi)^d(-1)^d\frac{\prod_{w\in\mathcal{S}_{<1}}w}{\det(A_{RL})}
	\end{equation}
\end{theorem}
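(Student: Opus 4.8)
The plan is to exploit the fact — established in the study of $\Phi_A$ in section \ref{phi} — that the discrete harmonic equation $A_{RL}x_{k-1}+(A_{LL}+A_{RR})x_k+A_{LR}x_{k+1}=0$ has characteristic polynomial $\det\Phi_A$, so its decaying (as $k\to-\infty$) solutions are exactly spanned by the modes $w^{-k}u_w$ for $w\in\mathcal{S}_{>1}(\Phi_A)$ (the zeros outside the unit disk), with $u_w$ the associated vectors of definition \ref{basisindexedbyzeros}. This immediately gives the equivalence $(ii)\Leftrightarrow(iii)$: if $W_{>1}$ is diagonal in the basis $(u_w)_{w\in\mathcal{S}_{>1}}$ with eigenvalues the $w$'s, then $x_k=W_{>1}^{-k}x_0$ is a superposition of these decaying modes and hence solves \eqref{dirich_1}; conversely, since the decaying solution with prescribed $x_0$ is unique (the map $x_0\mapsto(x_k)$ is determined by projecting $x_0$ onto the $u_w$ basis), the operator sending $x_0$ to $x_{-1}$ must be $\sum_w w^{-1}$ times the projection onto $\setC u_w$, i.e.\ $W_{>1}^{-1}$ diagonal in that basis with the stated eigenvalues. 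One must be a little careful here that assumption \ref{multiplicite} is what guarantees the modes $u_w$ form a genuine basis of $\setC^d$ rather than requiring generalized (Jordan-type) modes, and that the count of zeros outside the disk is exactly $d$, which should follow from the structure of $\Phi_A$ worked out earlier.

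Next I would prove $(i)\Leftrightarrow(iii)$. Suppose $(x_k)_{k\le 0}$ is a decaying solution of \eqref{dirich_1}. The idea is that left Schur-invariance \eqref{leftinv} is precisely the statement that the "transfer" relation $B_L x_0 = A_{RR}x_0 - A_{RL}(B_L+A_{LL})^{-1}A_{LR}(\cdot)$ is consistent with propagating the harmonic sequence one step: from the recursion at $k=0$ one gets $A_{RL}x_{-1}+(A_{LL}+A_{RR})x_0+A_{LR}x_1=0$, and substituting the ansatz $B_L=A_{RR}+A_{RL}W_{>1}^{-1}$ together with $x_{-1}=W_{>1}x_0$ (equivalently $x_0 = W_{>1}^{-1}\cdot(\text{shifted data})$) turns \eqref{leftinv} into an identity. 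More carefully: the Schur complement relation $B_L = A_{RR}-A_{RL}(B_L+A_{LL})^{-1}A_{LR}$ rewritten with $B_L+A_{LL}=A_{RR}+A_{LL}+A_{RL}W_{>1}^{-1}$ and matching against the one-step recursion gives $(B_L+A_{LL})x_{-1} = -A_{LR}x_{-2}$, which is exactly the recursion at the vertex $k=-1$; iterating backward shows $B_L$ is Schur-invariant iff the backward sequence generated by $W_{>1}$ solves the full harmonic recursion and decays. The decay at $-\infty$ is what pins down the choice of $W_{>1}$ among all matrix solutions of the quadratic matrix equation — this is the usual stable-manifold picture, and it is exactly where positivity of $Q^{(D)}$ (hence hyperbolicity: no zeros of $\Phi_A$ on $S^1$) is used.

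For the eigenvalue formula \eqref{lambda}, I would combine theorem \ref{propre}, which gives $\Lambda=\alpha(2\pi)^d\det(B_L+A_{LL})^{-1}$, with the computation of $\det(B_L+A_{LL})$ under the ansatz. Writing $B_L+A_{LL}=A_{RR}+A_{LL}+A_{RL}W_{>1}^{-1} = A_{RL}W_{>1}^{-1}(W_{>1}A_{RL}^{-1}(A_{RR}+A_{LL})+\mathds{1})$ and recognizing, via the definition of $\Phi_A$ and its zeros, that the product of the "missing" eigenvalues assembles into $\det$ of a companion-type object, one should get $\det(B_L+A_{LL}) = \det(A_{RL})\cdot(-1)^d/\prod_{w\in\mathcal{S}_{<1}}w$ after using $\prod_{w\in\mathcal{S}(\Phi_A)}w$ is determined by the top and bottom coefficients of $\det\Phi_A$ (which are $\det A_{LR}$ and $\det A_{RL}$ up to sign) so that $\prod_{\mathcal{S}_{>1}}w = \pm\det(A_{RL})/(\det(A_{LR})\prod_{\mathcal{S}_{<1}}w)$ — here one must track the sign and the factor $\det A_{LR}=\overline{\det A_{RL}}$ carefully, but these are precisely the kinds of identities catalogued in section \ref{phi}.

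The main obstacle I anticipate is not any single computation but the bookkeeping linking the three a priori different objects — the matrix quadratic equation \eqref{leftinv}, the spectral data $(w,u_w)$ of $\Phi_A$, and the boundary-value problem \eqref{dirich_1} — into a clean cycle of implications; in particular, justifying that "$W_{>1}$ diagonalizable with spectrum the zeros outside the disk" is forced (and not merely sufficient) requires uniqueness of the decaying solution, which in turn leans on assumption \ref{multiplicite} and the absence of zeros on $S^1$, and getting the constant in \eqref{lambda} exactly right (sign, conjugation) will require care with the precise normalization of $\Phi_A$ from \eqref{defphi}.
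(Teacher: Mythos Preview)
Your overall strategy is sound and close to the paper's, but two genuine gaps remain, both tied to positivity conditions that you do not address.

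First, in the direction $(ii)\Rightarrow(i)$ you never verify that $B_L=A_{RR}+A_{RL}W_{>1}^{-1}$ lies in $H_d^+(\setC)$. By definition, ``left Schur-invariant for $A$'' requires $B_L$ to be Hermitian positive definite, not merely to satisfy the fixed-point equation \eqref{leftinv}. This is not automatic: the paper devotes lemma \ref{positivedef} to it, rewriting $x^*B_Lx$ via \eqref{symsym} as the convergent series $\sum_{k\ge 0}\begin{pmatrix}W^kx\\W^{k+1}x\end{pmatrix}^*A\begin{pmatrix}W^kx\\W^{k+1}x\end{pmatrix}$ and using positivity of $A$. Your sketch contains no analogue of this.

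Second, in the direction $(i)\Rightarrow(ii)$ your ``decay pins down $W_{>1}$'' argument is circular. Given only that $B_L$ is Schur-invariant, you can deduce (via the matrix identity $A_{LL}+A_{RR}+A_{RL}W_{>1}^{-1}+A_{LR}W_{>1}=0$, which is cleaner than your recursion-matching) that each eigenvalue of $W_{>1}$ is a zero of $\Phi_A$; but hyperbolicity (no zeros on $S^1$) alone does not force them to lie in $\mathcal{S}_{>1}$ rather than $\mathcal{S}_{<1}$. The paper closes this with lemma \ref{invtomodule}: the hypothesis $(i)$ includes $B_L\in H_d^+(\setC)$, and evaluating the quadratic form on an eigenvector yields $\frac{1}{1-|w|^{-2}}\begin{pmatrix}u_w\\w^{-1}u_w\end{pmatrix}^*A\begin{pmatrix}u_w\\w^{-1}u_w\end{pmatrix}>0$, forcing $|w|>1$.

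On organization: the paper does not prove theorem \ref{main} directly. It proves theorem \ref{main2} (the right case, $(i)\Leftrightarrow(ii)$ via the identity \eqref{phimatrix}) and then obtains theorem \ref{main} by the symmetry $A\mapsto SAS^{-1}$ and lemma \ref{leftright}. Your direct attack is a legitimate alternative, but going through $(i)\Leftrightarrow(iii)$ rather than $(i)\Leftrightarrow(ii)$ obscures the one-line algebra: Schur-invariance plus invertibility of $A_{RL}$ is \emph{equivalent} to the matrix equation above, which applied to an eigenvector reads $\Phi_A(w)u_w=0$. For the eigenvalue \eqref{lambda}, this same identity gives $B_L+A_{LL}=-A_{LR}W_{>1}$ directly, so $\det(B_L+A_{LL})=(-1)^d\det(A_{LR})\prod_{w\in\mathcal{S}_{>1}}w$, and one finishes with \eqref{prod_res} --- much shorter than the companion-matrix manipulation you outline.
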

An analog theorem for right eigen-boundary condition $B_R$ involving zeros inside the unit disk $\mathcal{S}_{<1}(\Phi_A)$ is given in the same section, see theorem \ref{main2}. Here are a few comments concerning these two theorems. First of all, we remark from \eqref{lambda} that the eigenvalue is entirely determined by zeros of $\Phi_A$. By \eqref{free_energy_eigen}, taking the logarithm gives an expression for the free energy in terms of these zeros and which, as we will see in \eqref{egalite_energies_libres}, is equal to the free energy computed with periodic boundary conditions. Those theorems also give an explicit construction of left and right Schur-invariant elements $B_L$ and $B_R$ and reduce the eigen-boundary problem to a linear algebra problem. The construction of $B_L$ is performed from zeros of $\Phi_A$ outside the unit disk together with their associated basis whereas the construction of $B_R$ is performed from the zeros inside the unit disk. Finally we remark the emergence of new matrices $W_{<1}$ and $W_{>1}$ which solve the discrete harmonic problem $(iii)$ associated to the tridiagonal operator generated by $A$. In some sense, the function $\Phi_A$, and especially its zeros with their associated bases, is what links Fourier with invariant boundary conditions.

In the same section we will show that, under eigen-boundary conditions provided by these theorems, we recover the marginal laws computed through discrete Fourier transform, see lemma \ref{cov-inverse}. 

\paragraph{Interlude: solving invariant equations.} Apart from any probabilistic considerations, theorems \ref{main} and \ref{main2} provide an efficient algorithm to solve Schur-invariant equations \eqref{leftinv} and \eqref{rightinv}. Of course, these equations are easily solvable in the case $d=1$. It is just a matter of finding the roots of a second order polynomial with complex coefficients. Nevertheless, when $d>1$, this becomes quite difficult. We have to solve a non-linear system of $d(d+1)/2$ equations (by hermitian property) involving multivariate polynomials. Nevertheless, theorems \ref{main} and \ref{main2} give a very simple step by step algorithm to build a solution:
\begin{itemize}
	\item Find $\mathcal{S}(\Phi_A)$. This amounts to solve $\det(\Phi_A)=0$, that is finding the $2d$ roots a polynomial of order $2d$. 
	\item For any $w\in\mathcal{S}(\Phi_A)$, find a basis of $\ker\Phi_A(w)$. This can be performed by Gaussian elimination.
	\item Build $W_{>1}$ and $W_{<1}$ in the basis $(u_w)_{w\in\mathcal{S}_{>1}(\Phi_A)}$ and $(u_w)_{w\in\mathcal{S}_{<1}(\Phi_A)}$ respectively.
\end{itemize}

\subsubsection{Identities in law under Gibbs measure}
Under the infinite volume Gibbs measure provided by eigen-boundary conditions given by theorems \ref{main} and \ref{main2}, we give, in section \ref{lawid}, some identities in law under equilibrium. Motivated by the point $(iii)$ of theorem \ref{main} and \ref{main2}, we link the matrices $W_{<1}$ and $W_{>1}$ with conditional expectations (known to be harmonic). In particular, we will see that conditional expectations are given by two semi-groups generated by zeros of $\Phi_A$, see theorem \ref{condexp}. In this theorem, we will also compute conditional variances. 

\subsubsection{The case $A_{LR}$ non-invertible}
The main results of this paper are established under the assumption that $A_{LR}$ is full rank, see assumption \ref{full_rank}. In section \ref{non-invertible}, we show how to extend these results to the case where $A_{LR}$ is non-invertible. This does not change the nature of the results but makes computations much heavier. In this new context, the bases indexed by zeros of $\Phi_A$ now need to be completed with the bases of $\ker A_{LR}$  and $\ker A_{RL}$ to form basis of $\setC^d$. We will build left and right Schur-invariant boundary conditions in terms of matrices similar to $W_{<1}$ and $W_{>1}$ defined in theorem \ref{main} and \ref{main2}. These matrices are now diagonal in the bases indexed by zeros of $\Phi_A$ completed with the bases $\ker A_{LR}$ and $\ker A_{RL}$. Especially, they are no longer invertible (their kernels are respectively $\ker(A_{R
	L})$ and $\ker(A_{LR})$). 
\subsubsection{Application: probabilistic representation of Szegő limit theorem}
Szegő limit theorem is a famous result concerning the asymptotic behavior of determinant of Toeplitz matrices. In section \ref{Szego}, we propose an "eigen" version of this theorem in a simple case as an application. 
\paragraph{Acknowledgments.} We are very grateful to Damien Simon for his constant feedback and for the numerous fruitful discussions we had. 
\section{Study of the meromorphic function $\Phi_A$}\label{phi}
Let $d\in\setN^*$ and $A\in H_{2d}^+(\setC)$ with block decomposition previously given in \eqref{blockdecompo}. Let $\Phi_A$ be the matrix-valued function defined by
\begin{align}
	\Phi_A(z):\setC^{*} &\to M_d(\setC)\label{defphi}\\
	z & \mapsto \Phi_A(z)=A_{LL}+A_{RR}+A_{LR}z+A_{RL}z^{-1}. \nonumber
\end{align}
If $A$ is clear from the context, we skip the index and write simply $\Phi$. Note that, for all $z\in\setC^*$, $\Phi(z)$ is obtained from $A$ through
\begin{equation}\label{defphii}
	\Phi(z)=\begin{pmatrix}
		I_d\\
		\bar{z}^{-1}I_d
	\end{pmatrix}^{*}A\begin{pmatrix}
		I_d\\
		zI_d
	\end{pmatrix}
\end{equation}
with $I_d$ the $d$-dimensional identity matrix. This section is devoted to the study of this function $\Phi$. First, we remark that we have the following lemma. 
\begin{lemma}
	For all $z\in\setC^*$, 
	\begin{equation}\label{conj}
		\Phi\left(1/\Bar{z}\right)=\Phi(z)^*.  
	\end{equation}        
\end{lemma}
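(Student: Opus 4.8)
The plan is to use only the Hermitian structure of $A$. Since $A\in H_{2d}^+(\setC)$, it is in particular Hermitian, so its block decomposition \eqref{blockdecompo} satisfies $A_{LL}^*=A_{LL}$, $A_{RR}^*=A_{RR}$ and $A_{LR}^*=A_{RL}$ (equivalently $A_{RL}^*=A_{LR}$). First I would expand the adjoint of $\Phi(z)$ termwise from the defining formula \eqref{defphi}:
\[
\Phi(z)^*=A_{LL}^*+A_{RR}^*+\bar z\, A_{LR}^*+\bar z^{-1}A_{RL}^*=A_{LL}+A_{RR}+\bar z\, A_{RL}+\bar z^{-1}A_{LR}.
\]
On the other hand, substituting $z\mapsto 1/\bar z$ directly into \eqref{defphi} gives
\[
\Phi(1/\bar z)=A_{LL}+A_{RR}+A_{LR}\,\bar z^{-1}+A_{RL}\,\bar z,
\]
and comparing the two displays yields \eqref{conj}.

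A slightly more structural variant would start instead from the compact form \eqref{defphii}: taking adjoints and using $A^*=A$ gives
\[
\Phi(z)^*=\begin{pmatrix} I_d\\ zI_d\end{pmatrix}^{*}A\begin{pmatrix} I_d\\ \bar z^{-1}I_d\end{pmatrix},
\]
and since the map $z\mapsto w\coloneqq 1/\bar z$ is an involution of $\setC^*$ with $\bar w^{-1}=z$, the right-hand side is exactly $\Phi(w)=\Phi(1/\bar z)$ as read off from \eqref{defphii}. Either way the argument is a one-line computation, so there is no genuine obstacle; the only point deserving a little care is keeping track of the substitution $z\mapsto 1/\bar z$ (noting in particular that it fixes the unit circle $S^1$, which is the reason \eqref{conj} forces $\Phi$ to take Hermitian values on $S^1$).
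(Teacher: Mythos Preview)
Your proof is correct and matches the paper's own argument, which simply notes that the identity is a direct computation using the Hermitian property of $A$. Your explicit expansion (and the alternative via \eqref{defphii}) is exactly what that one-line proof amounts to.
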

\begin{proof}
	It is a direct computation with the use of Hermitian property of $A$.
\end{proof}
This last lemma shows that the case $\vert z\vert=1$ is particular. We will therefore split the study of the function $\Phi$ into three subsections. Firstly, we will study $\Phi$ on the unit circle and deduce, from the positive definite and Hermitian properties, the existence of an inverse $\Phi^{-1}$. Moreover,we will show that the Fourier coefficients of this inverse follow a second order recursive relation. Secondly, we will study $\Phi$ outside the unit circle and focus on its zeros, the points where it is not invertible. In particular, we will see that the free energies computed via Fourier and via eigen-boundary conditions are both equal to a sum over these zeros. Finally, we will introduce two non trivial bases of $\setC^d$, indexed by zeros of $\Phi$. With the help of residue calculus, we will decompose $\Phi^{-1}$, everywhere it is invertible, and its Fourier coefficients on these bases. The bases indexed by zeros of $\Phi$ play an important role in the description of eigen-boundary conditions, see theorems \ref{main} and \ref{main2} and in probability laws under infinite volume Gibbs measure, see theorem \ref{condexp}.
\subsection{Behavior on the unit circle}
For this whole subsection, $d\in\setN^*$ and $A\in H_{2d}^+(\setC)$ are fixed.
\paragraph{Invertibility on the unit circle.}We start by showing that $\Phi$ is invertible on the unit circle.
\begin{lemma}[Hermitian property and positive definiteness]\label{Invertible}
	For all $\theta\in[0,2\pi]$, $\Phi(e^{\ii\theta})$ is Hermitian and positive definite of dimension $d$. Therefore, for all $\theta\in[0,2\pi]$, $\Phi(e^{\ii\theta})$ has a Hermitian and positive definite inverse $\Phi(e^{\ii\theta})^{-1}$. Finally, $\theta\mapsto\Phi(e^{\ii\theta})^{-1}$ is continuous.
\end{lemma}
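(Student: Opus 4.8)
The plan is to read off the Hermitian property and positive definiteness directly from the two formulas \eqref{defphi} and \eqref{defphii} for $\Phi$, then invoke standard linear algebra facts about $H_d^+(\setC)$ to get the inverse, and finally combine continuity of matrix inversion on the cone of invertible matrices with compactness of the circle.

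First I would establish the Hermitian property. Since $A\in H_{2d}^+(\setC)$, its blocks satisfy $A_{LL}^*=A_{LL}$, $A_{RR}^*=A_{RR}$ and $A_{LR}^*=A_{RL}$. Taking adjoints in $\Phi(e^{\ii\theta})=A_{LL}+A_{RR}+A_{LR}e^{\ii\theta}+A_{RL}e^{-\ii\theta}$ and using $\overline{e^{\ii\theta}}=e^{-\ii\theta}$ gives $\Phi(e^{\ii\theta})^*=\Phi(e^{\ii\theta})$. (Equivalently, this is the $\vert z\vert=1$ case of \eqref{conj}, for which $1/\bar z=z$.) For positive definiteness I would use \eqref{defphii}: on the unit circle one has $\bar z^{-1}=z$, so $\Phi(z)=M(z)^*AM(z)$ with $M(z)=\begin{pmatrix} I_d\\ zI_d\end{pmatrix}$. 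For any nonzero $x\in\setC^d$ the vector $M(z)x=\begin{pmatrix}x\\ zx\end{pmatrix}\in\setC^{2d}$ is nonzero, hence $x^*\Phi(z)x=(M(z)x)^*A(M(z)x)>0$ because $A$ is positive definite. Thus $\Phi(e^{\ii\theta})\in H_d^+(\setC)$.

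In particular $\Phi(e^{\ii\theta})$ is invertible, and the inverse is again Hermitian (adjunction commutes with inversion) and positive definite (its eigenvalues are the reciprocals of the positive eigenvalues of $\Phi(e^{\ii\theta})$). For continuity, $\theta\mapsto\Phi(e^{\ii\theta})$ is a matrix-valued trigonometric polynomial, hence continuous, taking values in the open set of invertible matrices, on which $M\mapsto M^{-1}$ is continuous by Cramer's rule; composing gives continuity of $\theta\mapsto\Phi(e^{\ii\theta})^{-1}$. Alternatively, $\theta\mapsto\det\Phi(e^{\ii\theta})$ is continuous and positive on the compact interval $[0,2\pi]$, hence bounded below by some $\delta>0$, which yields the claim together with a uniform bound on $\Vert\Phi(e^{\ii\theta})^{-1}\Vert$ that will be convenient later.

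I do not expect any genuine obstacle in this lemma; it is a routine verification. The only points deserving a little care are the identification $A_{LR}^*=A_{RL}$ coming from the Hermitian block structure of $A$, and the substitution $\bar z^{-1}=z$, which is valid precisely on $S^1$ and is exactly what collapses the two-sided expression \eqref{defphii} into the manifestly nonnegative form $M(z)^*AM(z)$.
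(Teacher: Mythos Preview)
Your argument is correct and follows essentially the same route as the paper: Hermitian property from \eqref{conj} (or the block structure, which is the same computation), positive definiteness from \eqref{defphii} together with $A\in H_{2d}^+(\setC)$, and continuity of the inverse via the adjugate (Cramer) formula. You simply spell out in more detail what the paper states in one line.
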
 
\begin{proof}
	Hermitian property is direct from \eqref{conj} and positive definite property follows from \eqref{defphii} together with positive definite property of $A$. Continuity is direct from continuity of $\Phi$ and inverse formula using adjugate matrix. 
\end{proof}
\paragraph{Fourier coefficients of $\Phi^{-1}$.} Motivated by the covariance matrix computed in theorem \ref{Fourier}, we are interested in the Fourier coefficients of $\Phi^{-1}$. We first recall the definition of matrix-valued Fourier coefficients.
\begin{definition}[Fourier coefficients of a matrix]
	Let $d\in\setN^*$ and, for all $\theta\in[0,2\pi]$, let  $\Psi(\theta)$ be a matrix of dimension $d\times d$. Suppose $\Psi_{i,j}\in L^{1}([0,2\pi])$ for all $i$, $j\in\set{1,\dots,d}$. For all $k\in\setZ$ we define the $k$-th Fourier coefficient of $\Psi$ as the matrix $C_{k}\in M_d(\setC)$ such that
	$$(C_{k}(\Psi))_{i,j}=\frac{1}{2\pi}\int_{0}^{2\pi}\left(\Psi(\theta)\right)_{i,j}e^{-\ii k \theta}d\theta$$
	for all $i$, $j\in\set{1,\dots,d}$. We will often write 
	
	\begin{equation}\label{fc}
		C_{k}(\Psi)=\frac{1}{2\pi}\int_{0}^{2\pi}\Psi(\theta)e^{-\ii k \theta}d\theta.   
	\end{equation}
\end{definition}
\paragraph{Notation.} As a continuous function, $\Phi^{-1}\in L^1([0,2\pi])$. In the following we will denote
\begin{equation}\label{C_k}
	C_k\coloneqq C_k(\Phi^{-1})
\end{equation}  
for all $k\in\setZ$ the Fourier coefficients of $\Phi^{-1}$. They obey to the following linear relation.
\begin{lemma}\label{recursive}
	For all $k\in\setZ$,  we have 
	\begin{equation}\label{hermf}
		C_k^*=C_{-k}   
	\end{equation}
	and 
	\begin{equation}\label{rec}
		C_{k}(A_{LL}+A_{RR})+C_{k+1}A_{RL}+C_{k-1}A_{LR}=I_{d}\delta_{k,0}.
	\end{equation}
\end{lemma}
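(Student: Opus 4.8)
The plan is to prove both identities by directly plugging the definition of the Fourier coefficients \eqref{fc} into the defining relation \eqref{defphi} for $\Phi$, using the fact that multiplication of matrix-valued functions and integration commute in the obvious way. First I would establish \eqref{hermf}: by Lemma \ref{Invertible}, $\Phi(e^{\ii\theta})^{-1}$ is Hermitian for every $\theta$, so $\left(\Phi(e^{\ii\theta})^{-1}\right)^* = \Phi(e^{\ii\theta})^{-1}$, and taking conjugate transpose inside the integral defining $C_k$ gives
\begin{equation*}
	C_k^* = \frac{1}{2\pi}\int_0^{2\pi}\left(\Phi(e^{\ii\theta})^{-1}\right)^* e^{\ii k\theta}\,d\theta = \frac{1}{2\pi}\int_0^{2\pi}\Phi(e^{\ii\theta})^{-1} e^{-\ii(-k)\theta}\,d\theta = C_{-k},
\end{equation*}
which is \eqref{hermf}. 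Alternatively one can invoke \eqref{conj}, but the Hermitian-on-the-circle statement is the cleanest route.

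For the recursion \eqref{rec}, the key observation is that $\Phi^{-1}$ satisfies $\Phi(z)\,\Phi(z)^{-1} = I_d$ pointwise on $S^1$. Writing $z = e^{\ii\theta}$ and using $\Phi(e^{\ii\theta}) = (A_{LL}+A_{RR}) + A_{LR}e^{\ii\theta} + A_{RL}e^{-\ii\theta}$, I would multiply $\Phi(e^{\ii\theta})^{-1}$ on the \emph{left} by $\Phi(e^{\ii\theta})$ (the orientation matters since the $A_{IJ}$ need not commute with the Fourier coefficients, which is why the $A_{RL}$, $A_{LR}$ appear on the right in \eqref{rec}) — actually, to land the $A$-blocks on the right as in the statement, I would instead use $\Phi(e^{\ii\theta})^{-1}\Phi(e^{\ii\theta}) = I_d$, i.e.
\begin{equation*}
	\Phi(e^{\ii\theta})^{-1}(A_{LL}+A_{RR}) + \Phi(e^{\ii\theta})^{-1}A_{LR}\,e^{\ii\theta} + \Phi(e^{\ii\theta})^{-1}A_{RL}\,e^{-\ii\theta} = I_d.
\end{equation*}
Now apply $\frac{1}{2\pi}\int_0^{2\pi}(\,\cdot\,)e^{-\ii k\theta}\,d\theta$ to both sides. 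On the right, $\frac{1}{2\pi}\int_0^{2\pi} I_d\, e^{-\ii k\theta}\,d\theta = I_d\,\delta_{k,0}$. On the left, by linearity and the substitution of the exponential factors into the integral, the first term yields $C_k(A_{LL}+A_{RR})$, the second yields $\frac{1}{2\pi}\int_0^{2\pi}\Phi(e^{\ii\theta})^{-1}e^{-\ii(k-1)\theta}\,d\theta\cdot A_{LR} = C_{k-1}A_{LR}$, and the third yields $C_{k+1}A_{RL}$. Summing gives exactly \eqref{rec}.

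There is essentially no hard part here — the only thing to be careful about is the bookkeeping of left-versus-right multiplication by the non-commuting blocks $A_{IJ}$, making sure one uses $\Phi^{-1}\Phi = I$ (not $\Phi\Phi^{-1} = I$) so that the $A$-blocks end up on the right of the $C_j$'s as written, and the shift of index on the exponential ($e^{\ii\theta}\cdot e^{-\ii k\theta} = e^{-\ii(k-1)\theta}$ pulls out $C_{k-1}$, etc.). Integrability is not an issue since $\Phi^{-1}$ is continuous on $S^1$ by Lemma \ref{Invertible}, so all integrals converge and Fubini-type interchanges are trivially justified. One could also remark that \eqref{rec} can be read as saying the sequence $(C_k)_{k\in\setZ}$ of Fourier coefficients satisfies the same tridiagonal recurrence (with a source at $k=0$) that underlies the operator $Q^{(D)}$ and the Dirichlet problem \eqref{dirich_1}, which is the structural reason this lemma matters later.
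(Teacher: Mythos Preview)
Your proof is correct and follows essentially the same approach as the paper: both use the Hermitian property of $\Phi(e^{\ii\theta})^{-1}$ for \eqref{hermf}, and for \eqref{rec} both extract the $k$-th Fourier coefficient of the identity $\Phi(e^{\ii\theta})^{-1}\Phi(e^{\ii\theta})=I_d$ (you by integrating against $e^{-\ii k\theta}$, the paper by writing out the Fourier series of $\Phi^{-1}$ and identifying coefficients). Your care with the left/right ordering so that the $A$-blocks land on the right of the $C_j$'s is exactly the point.
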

\begin{proof}
	The first identity is due to Hermitian property of $\Phi^{-1}$. By definition of the inverse, for all $\theta\in[0,2\pi]$,
	$$\Phi(e^{\ii\theta})^{-1}\Phi(e^{\ii\theta})=I_{d}.$$
	Using Fourier decomposition $\Phi_{\theta}^{-1}=\sum_{k\in\setZ}C_{k}e^{\ii k \theta}$ and the definition of $\Phi_{\theta}$ we get
	$$\sum_{k\in\setZ}\left(C_{k}(A_{LL}+A_{RR})+C_{k+1}A_{RL}+C_{k-1}A_{LR}\right)e^{\ii k \theta}=I_{d}$$
	and uniqueness of Fourier decomposition gives the result.
\end{proof}
\begin{rmk}
	Fourier coefficients $(C_k)_{k\in\setZ}$ follow a second order recursive relation. This can be seen as a  consequence of the Markov property of the model or, in other words, of the "nearest neighbor" structure of the model. This can be seen in the fact that $Q_{(A,B_\bullet)}^{(D)}$ is tridiagonal, see \eqref{matrice_couplage}: it is a "nearest neighbor" operator, generated by $A$ with boundary conditions $B_L$ and $B_R$.
\end{rmk}
\begin{rmk}
	Taking Hermitian transpose of \eqref{rec} together with \eqref{hermf} we also get,
	$$(A_{LL}+A_{RR})C_k+A_{RL}C_{k+1}+A_{LR}C_{k-1}=I_d \delta_{k,0}$$
	for all $k\in\setZ$.
\end{rmk}

\subsection{Behavior on $\setC^*$}
For this whole subsection, $d\in\setN^*$ and $A\in H_{2d}^+(\setC)$ are fixed. Except if the contrary is mentioned, we also work under assumption \ref{full_rank}. The matrix $\Phi$ is not invertible everywhere on $\setC^*$. The following lemma describes the singularities of $\Phi^{-1}$.

\begin{lemma}\label{zeros}
	Suppose assumption \ref{full_rank} fulfilled. The equation $\det\Phi(z)=0$ for $z\in\setC^*$ admits $2d$ solutions counted with multiplicity. We let
	\begin{equation}\label{zeros_of_phi}
		\mathcal{S}(\Phi)=\lbrace z\in\setC^*; \det\Phi(z)=0\rbrace,
	\end{equation}
	abusively called the set of zeros of $\Phi$. For all $w\in\mathcal{S}(\Phi)$, $1/\overline{w}\in\mathcal{S}(\Phi)$. Moreover, for all $z\in\setC^*$,
	\begin{equation}\label{polynomial}
		\det\Phi(z)=\det(A_{LR})z^{-d}\prod_{w\in\mathcal{S}(\Phi)}(z-w).
	\end{equation}
\end{lemma}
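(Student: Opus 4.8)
The plan is to remove the pole of $\Phi$ at the origin and reduce everything to a single scalar polynomial. Multiplying by $z$ turns $\Phi$ into the entire matrix polynomial $z\Phi(z) = A_{LR}z^2 + (A_{LL}+A_{RR})z + A_{RL}$, so that $p(z)\coloneqq\det(z\Phi(z)) = z^d\det\Phi(z)$ is an ordinary polynomial of degree at most $2d$. The first step is to pin down its extreme coefficients: expanding the determinant by multilinearity in the rows, the coefficient of $z^{2d}$ is $\det(A_{LR})$ and the constant term is $\det(A_{RL})$. By assumption \ref{full_rank}, $\det(A_{LR})\neq 0$; and since $A$ is Hermitian we have $A_{RL}=A_{LR}^*$, hence $\det(A_{RL})=\overline{\det(A_{LR})}\neq 0$ as well.

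Consequently $p$ has degree exactly $2d$ and $p(0)\neq 0$, so $p$ has exactly $2d$ roots in $\setC^*$ counted with multiplicity, and these are by definition the points of $\mathcal{S}(\Phi)$. Factoring, $p(z) = \det(A_{LR})\prod_{w\in\mathcal{S}(\Phi)}(z-w)$ (the product read with multiplicity), and dividing by $z^d$ gives \eqref{polynomial}. For the reflection symmetry I would simply invoke \eqref{conj}: $\Phi(1/\bar z)=\Phi(z)^*$ gives $\det\Phi(1/\bar z)=\overline{\det\Phi(z)}$, so $\det\Phi$ vanishes at $z$ if and only if it vanishes at $1/\bar z$, and $1/\bar z\in\setC^*$ whenever $z\in\setC^*$, which keeps us inside the domain of $\Phi$.

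I do not anticipate a real difficulty; the one point that must not be glossed over is the claim $\deg p = 2d$, and more generally that $p$ has no root at $0$ — both are exactly what assumption \ref{full_rank} (together with Hermiticity, which upgrades "$A_{LR}$ full rank" to "$A_{RL}$ full rank") provides. Without the full-rank hypothesis the polynomial $p$ could drop degree or acquire a root at the origin, and the clean count of $2d$ zeros in $\setC^*$ would fail; this is precisely why the lemma is stated under assumption \ref{full_rank}. A minor bookkeeping remark: in \eqref{polynomial} the "set" $\mathcal{S}(\Phi)$ is used with multiplicity, so strictly speaking the product ranges over the multiset of roots of $p$.
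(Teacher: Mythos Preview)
Your proof is correct and follows essentially the same route as the paper: both clear the pole by passing to $p(z)=z^d\det\Phi(z)=\det(z\Phi(z))$, identify the leading and constant coefficients as $\det(A_{LR})$ and $\det(A_{RL})$ to see that $p$ has exactly $2d$ roots in $\setC^*$, and then invoke \eqref{conj} for the $w\mapsto 1/\bar w$ symmetry. Your explicit remark that Hermiticity of $A$ gives $A_{RL}=A_{LR}^*$ (hence $\det(A_{RL})\neq 0$) is a small clarification the paper leaves implicit.
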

\begin{proof}
	Remark that $z\Phi(z)$ is a matrix polynomial of degree $2$. 
	Coming back to the definition of determinant we get that, for all $z\in\setC^*$, 
	$$z^d\det\Phi(z)=\det z\Phi(z)$$
	so that
	$$z^d\det\Phi(z)=\sum_{\sigma\in\mathfrak{S}_d}\epsilon(\sigma)\prod_{i=1}^d\left(A_{LR}z^2+(A_{LL}+A_{RR})z+A_{RL}\right)_{i,\sigma(i)}$$
	is a polynomial of degree $2d$ and $0$ is not a root since $A_{RL}$ is invertible. Therefore, counted with multiplicity, the equation $\det\Phi(z)=0$ admits $2d$ solutions on $\setC^*$. It is clear that the coefficient in front of $z^{2d}$ is $\det(A_{LR})$ so we get \eqref{polynomial}. Remark that the constant term is $\det(A_{RL})$. Finally, take $w\in\mathcal{S}(\Phi)$, the identity \eqref{conj} gives $1/\overline{w}\in\mathcal{S}(\Phi)$.
\end{proof}
A direct consequence of the latter proof is 
\begin{equation}\label{prod_res}
	\frac{\prod_{w\in\mathcal{S}_{<1}}w}{\det(A_{RL})}=\frac{\prod_{w\in\mathcal{S}_{<1}}\overline{w}}{\det(A_{LR})}.
\end{equation}
\paragraph{Notation.} Since $A$ is fixed, so is $\Phi$, and we will often simply write $\mathcal{S}$ instead of $\mathcal{S}(\Phi)$. We can decompose $\mathcal{S}=\mathcal{S}_{< 1}\sqcup\mathcal{S}_{> 1}$ with
\begin{equation}\label{zerosinsideandoutside}
	\mathcal{S}_{<1}=\lbrace w\in\mathcal{S}; \vert w\vert<1\rbrace\text { and }\mathcal{S}_{>1}=\lbrace w\in\mathcal{S}; \vert w\vert>1\rbrace	
\end{equation}
zeros inside and outside the unit disk. We recall that there is no zero on the unit circle thanks to lemma \ref{Invertible}. 
\begin{rmk}
	Previous lemma tells that $\Phi$ is not invertible on $d$ points inside the unit disk and on $d$ points outside counted with multiplicity. In particular, $\Phi$ admits the following Birkhoff decomposition
	$$\Phi=(\Phi^+)^*\Phi^+$$
	with $\Phi^+$ continuous and invertible on the unit disk, see \cite{clancey2013factorization}. We will however use a more linear version, see theorem \ref{residues}, involving bases indexed by zeros of $\Phi$ inside and outside the unit disk. 
\end{rmk}

\paragraph{Link with free energy.} As an application of the previous lemma, and to highlight the role played by $\mathcal{S}$, we propose to show that the free energy computed with periodic boundary conditions $f^{\mathrm{per}}(\alpha,A)$ (see \eqref{free_energy_per}) and the free energy computed with eigen-boundary conditions $f^{\mathrm{eigen}}(\alpha,A)$ (see \eqref{free_energy_eigen}) are equal and can be expressed in terms of zeros of $\Phi$. This is a first link between the two approaches. Nevertheless, to link the Fourier approach with the invariant matrices $B_L$ and $B_R$ we will need more than just the zeros of $\Phi$: we will need the bases indexed by zeros of $\Phi$, see definition \ref{basisindexedbyzeros} and theorems \ref{main} and \ref{main2}. 
\begin{lemma} Let $\alpha>0$. Under assumptions \ref{full_rank} and \ref{multiplicite}, we have
	\begin{equation}\label{egalite_energies_libres}
		f^{\mathrm{per}}(\alpha,A)=f^{\mathrm{eigen}}(\alpha,A).
	\end{equation}	
\end{lemma}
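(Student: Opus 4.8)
The strategy is to compute both free energies explicitly and check they coincide with the same expression involving the zeros of $\Phi$. From Theorem \ref{Fourier} we already have the integral formula
\[
f^{\mathrm{per}}(\alpha,A)=\log(\alpha(2\pi)^d)-\frac{1}{2\pi}\int_0^{2\pi}\log\det\Phi_A(e^{\ii\theta})\,d\theta,
\]
so the bulk of the work is to evaluate that integral. From Theorem \ref{propre} together with the forthcoming Theorem \ref{main} (which we are allowed to invoke, and which gives \eqref{lambda}), we have
\[
f^{\mathrm{eigen}}(\alpha,A)=\log\Lambda=\log\!\left(\alpha(2\pi)^d(-1)^d\frac{\prod_{w\in\mathcal{S}_{<1}}w}{\det(A_{RL})}\right).
\]
Hence it suffices to prove
\[
\frac{1}{2\pi}\int_0^{2\pi}\log\det\Phi_A(e^{\ii\theta})\,d\theta=-\log\!\left((-1)^d\frac{\prod_{w\in\mathcal{S}_{<1}}w}{\det(A_{RL})}\right),
\]
and then reconcile this with the $f^{\mathrm{per}}$ formula by cancelling the common $\log(\alpha(2\pi)^d)$ term.

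First I would use the factorization \eqref{polynomial} from Lemma \ref{zeros}: for $z$ on the unit circle,
\[
\det\Phi_A(z)=\det(A_{LR})\,z^{-d}\prod_{w\in\mathcal{S}}(z-w).
\]
Taking $\log$ and integrating over $\theta$ with $z=e^{\ii\theta}$, the term $z^{-d}$ contributes $0$ (since $\frac{1}{2\pi}\int_0^{2\pi}(-\ii d\theta)\,d\theta=0$), and each factor $(z-w)$ contributes $\frac{1}{2\pi}\int_0^{2\pi}\log(e^{\ii\theta}-w)\,d\theta$, which by the classical Jensen-type computation equals $\log|w|$ if $|w|>1$ and $0$ if $|w|<1$ (there are no zeros on $S^1$ by Lemma \ref{Invertible}, so the integral is well defined and the integrand is in $L^1$, consistent with Lemma \ref{Invertible}). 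Therefore
\[
\frac{1}{2\pi}\int_0^{2\pi}\log\det\Phi_A(e^{\ii\theta})\,d\theta=\log\det(A_{LR})+\sum_{w\in\mathcal{S}_{>1}}\log|w|=\log\!\left(\det(A_{LR})\prod_{w\in\mathcal{S}_{>1}}|w|\right),
\]
where one must be slightly careful that $\log\det\Phi_A(e^{\ii\theta})$ is real (indeed positive) by Lemma \ref{Invertible}, so we are really integrating $\log$ of the modulus of each complex factor and the imaginary parts cancel in aggregate.

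Next I would convert $\det(A_{LR})\prod_{w\in\mathcal{S}_{>1}}|w|$ into the target expression $\bigl((-1)^d\prod_{w\in\mathcal{S}_{<1}}w/\det(A_{RL})\bigr)^{-1}$. Using the pairing $w\mapsto 1/\bar w$ on $\mathcal{S}$ from Lemma \ref{zeros}, which is a bijection $\mathcal{S}_{<1}\to\mathcal{S}_{>1}$ (assuming here no zero is isolated on $S^1$, which holds), we get $\prod_{w\in\mathcal{S}_{>1}}|w|=\prod_{w\in\mathcal{S}_{<1}}|w|^{-1}$; combining with $|\det(A_{LR})|=|\det(A_{RL})|$ (they are conjugate transposes) and the relation \eqref{prod_res}, one checks the moduli match. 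The sign/phase bookkeeping — tracking $(-1)^d$ and the difference between $\prod w$ and $\prod|w|$ — is the step requiring the most care: one uses that $\det\Phi_A(1)$ is a positive real by Lemma \ref{Invertible}, evaluates the factorization \eqref{polynomial} at $z=1$ to get $\det(A_{LR})\prod_{w\in\mathcal{S}}(1-w)>0$, and pairs the factors $(1-w)(1-1/\bar w)=-(1-w)\overline{(1-w)}/\bar w = -|1-w|^2/\bar w$ to extract the phase of $\det(A_{LR})$ in terms of $\prod_{w\in\mathcal{S}_{<1}}\bar w$, which together with \eqref{prod_res} pins down everything. Finally, substituting back into the $f^{\mathrm{per}}$ formula yields exactly $\log\Lambda=f^{\mathrm{eigen}}(\alpha,A)$, completing the proof. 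The main obstacle is purely this phase/sign reconciliation; the analytic content (the Jensen integral) is standard.
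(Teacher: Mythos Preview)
Your proposal is correct and follows essentially the same route as the paper: reduce to evaluating $\frac{1}{2\pi}\int_0^{2\pi}\log\det\Phi_A(e^{\ii\theta})\,d\theta$ via the factorization \eqref{polynomial}, use that $\det\Phi_A$ is positive on $S^1$ (Lemma~\ref{Invertible}), and exploit the pairing $w\leftrightarrow 1/\bar w$ together with \eqref{prod_res} to match the eigenvalue formula \eqref{lambda}.

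The only organizational difference is in the order of the two steps ``integrate'' and ``pair up zeros''. The paper first rewrites the product over $\mathcal{S}$ as a product over $\mathcal{S}_{<1}$ of factors $(1-we^{-\ii\theta})(1-\bar w e^{\ii\theta})$, thereby isolating the positive constant $(-1)^d\det(A_{LR})/\prod_{w\in\mathcal{S}_{<1}}\bar w$ \emph{before} integrating, and then shows $\int_0^{2\pi}\log(1-ue^{\pm\ii\theta})\,d\theta=0$ for $|u|<1$ by a residue argument. You instead take absolute values from the start (legitimate since $\det\Phi_A(e^{\ii\theta})>0$), apply Jensen's formula $\frac{1}{2\pi}\int_0^{2\pi}\log|e^{\ii\theta}-w|\,d\theta=\max(0,\log|w|)$ term by term, and defer the phase reconciliation to the end. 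Your approach is slightly quicker on the analytic side (Jensen is standard) but puts all the sign/phase bookkeeping into one final step; the paper's ordering makes the positivity of the constant and the vanishing of each remaining integral separately transparent. Either way the content is the same.
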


\begin{proof}
	Using theorem \ref{main} (proven in section \ref{eigen-bound})	and equations \eqref{free_energy_per} and \eqref{free_energy_eigen}, we only have to show that 
	\begin{equation}\label{val_propre_residues}
		-\frac{1}{2\pi}\int_0^{2\pi}\log \det \Phi(e^{\ii\theta})d\theta=\log\left((-1)^d\frac{\prod_{w\in\mathcal{S}_{<1}}w}{\det(A_{RL})}\right).
	\end{equation}	
	For all $\theta\in[0,2\pi]$, using \eqref{polynomial} we have,
	\begin{align*}
		\det\Phi(e^{\ii\theta})&=\det(A_{LR})e^{-\ii d\theta}\prod_{w\in\mathcal{S}_{<1}}(e^{\ii\theta}-w)\prod_{w\in\mathcal{S}_{<1}}(e^{\ii\theta}-\frac{1}{\overline{w}})\\
		&=(-1)^d\frac{\det(A_{LR})}{\prod_{w\in\mathcal{S}_{<1}}\overline{w}}\prod_{w\in\mathcal{S}_{<1}}(1-we^{-\ii\theta})\prod_{w\in\mathcal{S}_{<1}}(1-\overline{w}e^{\ii\theta}).
	\end{align*}
	In particular, since $\det\Phi(e^{\ii\theta})>0$ for all $\theta\in[0,2\pi]$ by lemma \ref{Invertible}, we have $$(-1)^d\frac{\det(A_{LR})}{\prod_{w\in\mathcal{S}_{<1}}\overline{w}}>0.$$ Denote $S^1$ the unit circle, for all $u\in\setC^*$ such that $\vert u \vert<1$,
	\begin{align*}
		\int_0^{2\pi}\log(1-ue^{\ii\theta})d\theta=\int_{\mathcal{C}_1}\frac{1}{\ii z}\log(1-uz)dz=0.
	\end{align*}
	Indeed, $\log$ is the principal value, continuous onto $\mathbb{D}(1,\vert u\vert)$  the disk centered at $1$ with radius $\vert u\vert$. Therefore, the function $\frac{1}{\ii z}\log(1-uz)$ as a unique pole on $\mathbb{D}(0,1)$, the disk centered at $0$ with radius $1$, and the associated residue is equal to $0$. A change of variable also gives
	$$\int_0^{2\pi}\log(1-ue^{-\ii\theta})d\theta=0.$$
	Take $u=w$ or $u=\overline{w}$ and the result follows from the fact that logarithm of product is the sum of logarithms up to a multiple of $2\ii\pi$ and from \eqref{prod_res}. 
\end{proof} 

\paragraph{Symmetry property.} We end this section by giving little results concerning symmetry properties of $\Phi$. This will be convenient for deducing theorem \ref{main} from theorem \ref{main2} with a symmetric argument, see section \ref{eigen-bound}.  Letting $I_d$ the $d$-dimensional identity matrix, $S=\begin{pmatrix}
	0 & I_d\\
	I_d & 0
\end{pmatrix}$ acts by conjugation on $A$ in the following way
$$SAS^{-1}=\begin{pmatrix}
	A_{RR} & A_{RL}\\
	A_{LR} & A_{LL}
\end{pmatrix}$$
which reverses direction of edges. For all $z\in\setC^*$, we have
$$\Phi_{SAS^{-1}}(z)=\Phi_A\left(\frac{1}{z}\right).$$
We deduce the following result.
\begin{lemma}\label{leftright}
	Let $A\in H_{2d}^+(\setC)$. For all $z\in\mathcal{S}(\Phi_A)$, $\ker\Phi_{SAS^{-1}}(\Bar{z})=\ker\Phi_{A}\left(1/\Bar{z}\right).$ Moreover, we have
	$$\mathcal{S}(\Phi_{SAS^{-1}})=\overline{\mathcal{S}(\Phi_A)}$$
	where $\overline{\mathcal{S}(\Phi_A)}=\lbrace \Bar{z}, z\in\mathcal{S}(\Phi_A)\rbrace$.  
\end{lemma}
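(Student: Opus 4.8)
The plan is to unpack both conclusions directly from the relation $\Phi_{SAS^{-1}}(z)=\Phi_A(1/z)$ established just above the statement, together with the conjugation identity $\Phi_A(1/\bar z)=\Phi_A(z)^*$ from \eqref{conj}. The whole lemma is a purely algebraic consequence of these two functional equations, so I would not invoke anything probabilistic; the proof should be a short computation.

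For the kernel identity, fix $z\in\mathcal{S}(\Phi_A)$ and evaluate $\Phi_{SAS^{-1}}$ at the point $\bar z$. By the conjugation-by-$S$ formula, $\Phi_{SAS^{-1}}(\bar z)=\Phi_A(1/\bar z)$, and by \eqref{conj} this equals $\Phi_A(z)^*$. Hence $\ker\Phi_{SAS^{-1}}(\bar z)=\ker\big(\Phi_A(z)^*\big)$. Now I must reconcile this with the claimed $\ker\Phi_A(1/\bar z)$: applying \eqref{conj} once more, $\Phi_A(1/\bar z)=\Phi_A(z)^*$, so the two kernels literally coincide and there is nothing further to prove. (If instead one wants to see it as $\ker\Phi_A(1/\bar z)$ without re-substituting, one just notes $\Phi_{SAS^{-1}}(\bar z)=\Phi_A(1/\bar z)$ is the direct reading of the $S$-conjugation identity at the argument $\bar z$, since $1/\bar z$ is exactly where one lands.) I would present whichever chain is cleanest, likely: $\Phi_{SAS^{-1}}(\bar z)=\Phi_A\!\left(1/\bar z\right)$ by the $S$-conjugation formula with $z\mapsto\bar z$, which is the asserted equality of matrices, hence of kernels.

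For the set identity, the zeros of $\Phi_{SAS^{-1}}$ are the $z\in\setC^*$ with $\det\Phi_{SAS^{-1}}(z)=0$, i.e. $\det\Phi_A(1/z)=0$, i.e. $1/z\in\mathcal{S}(\Phi_A)$. So $\mathcal{S}(\Phi_{SAS^{-1}})=\{1/w : w\in\mathcal{S}(\Phi_A)\}$. To identify this with $\overline{\mathcal{S}(\Phi_A)}$, I use lemma \ref{zeros}: for every $w\in\mathcal{S}(\Phi_A)$ one has $1/\bar w\in\mathcal{S}(\Phi_A)$, and $\mathcal{S}(\Phi_A)$ is finite of size $2d$; the involution $w\mapsto 1/\bar w$ on $\mathcal{S}(\Phi_A)$ therefore shows $\{1/w: w\in\mathcal{S}(\Phi_A)\}=\{1/(1/\bar w): w\in\mathcal{S}(\Phi_A)\}=\{\bar w: w\in\mathcal{S}(\Phi_A)\}=\overline{\mathcal{S}(\Phi_A)}$. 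That gives the second claim.

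There is no real obstacle here — the only thing to be mildly careful about is bookkeeping the three arguments $z$, $1/z$, $1/\bar z$, $\bar z$ without conflating them, and making sure the $S$-conjugation identity is applied at the intended argument. I would write it in two short paragraphs, one per assertion, each a two-line chain of the identities $\Phi_{SAS^{-1}}(z)=\Phi_A(1/z)$, $\Phi_A(1/\bar z)=\Phi_A(z)^*$, and the pairing $w\leftrightarrow 1/\bar w$ on $\mathcal{S}(\Phi_A)$ from lemma \ref{zeros}.
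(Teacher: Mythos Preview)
Your proof is correct and follows essentially the same approach as the paper: both use the identity $\Phi_{SAS^{-1}}(z)=\Phi_A(1/z)$ for the kernel equality, and the pairing $w\leftrightarrow 1/\bar w$ on $\mathcal{S}(\Phi_A)$ for the set equality. Your version is slightly more explicit in spelling out why $\{1/w:w\in\mathcal{S}(\Phi_A)\}=\overline{\mathcal{S}(\Phi_A)}$ via the involution, whereas the paper only writes one inclusion and leaves the rest implicit; the detour through $\Phi_A(z)^*$ for the first claim is unnecessary (as you yourself note), and the mention of ``size $2d$'' is not needed since the involution argument works regardless of cardinality.
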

\begin{proof}
	The first identity is obvious thanks to the previous remark. If $z\in\mathcal{S}(\Phi_A)$ then $1/\Bar{z}\in\mathcal{S}(\Phi_A)$ and,  thanks to the previous remark, $\Bar{z}\in\mathcal{S}(\Phi_{SAS^{-1}})$.
\end{proof}

\subsection{The bases indexed by zeros of $\Phi$.}
Once again, for this whole subsection $d\in\setN^*$ and $A\in H_{2d}^+(\setC)$ are fixed. We will now expose an important notion of this paper, the bases indexed by zeros of $\Phi$. These bases are involved in most of the interesting quantities of the model, especially when looking for left and right Schur-invariant boundaries, see theorems \ref{main} and \ref{main2}. Besides the fact that we work under assumption \ref{full_rank}, we also make the following one. 
\begin{assumptionp}{II}\label{multiplicite}
	For any root $w$ of the polynomial $z^d\det\Phi(z)$, studied in lemma \ref{zeros}, let $\mathrm{mult}(w)$ be its multiplicity. To make things easier, we make the following assumption
	\begin{equation}
		\mathrm{mult}(w)=\dim\ker\Phi(w)=1 
	\end{equation}
	for all $w\in\mathcal{S}$. In that case, $\vert \mathcal{S}\vert = 2d$ and has exactly $d$ non zero elements inside the unit disk and $d$ outside. For all $w\in\mathcal{S}$, we fix $u_w\neq 0$ such that
	\begin{equation}
		\Phi(w)=\mathrm{Vect}_\setC(u_w).
	\end{equation}
\end{assumptionp}
Additional work might be needed for the general case. We only give an example where $\dim\ker\Phi(w)>1$.
\paragraph{Example 1.} 
Suppose $A_{LL}=A_{RR}=I_d$ and $A_{LR}$ Hermitian, thus diagonalized by a unitary matrix. Write $(a_i)_{i=1}^d$ its real eigenvalues. In that case, up to a change of basis, for all $z\in\setC^*$, $\Phi(z)$ is diagonal with diagonal entries given by $\phi_i(z)=1+a_iz+a_iz^{-1}$ for $i\in\lbrace1,\dots,d\rbrace$. If any eigenvalue is of multiplicity greater than $1$, the associated kernel is of dimension greater than $1$. 
\begin{definition}[Bases indexed by zeros of $\Phi$]\label{basisindexedbyzeros}  Under assumptions \ref{full_rank} and \ref{multiplicite}, we prove in theorem \ref{residues} that the families $(u_w)_{w\in\mathcal{S}_{<1}}$ and $(u_w)_{w\in\mathcal{S}_{>1}}$ are bases of $\setC^d$. They are called the bases indexed by zeros of $\Phi$, respectively, inside the unite disk and outside the unit disk.
\end{definition}
We introduce the following notation.
\paragraph{Notation.} Every $x\in\setC^d$ can be decomposed in the basis $(u_w)_{w\in\mathcal{S}_{<1}}$, resp. $( u_w)_{w\in\mathcal{S}_{>1}}$, as
\begin{equation}\label{decomposition_dans_base_zeros}
	\sum_{w\in\mathcal{S}_{<1}}x(w) u_w \text{ resp. } \sum_{w\in\mathcal{S}_{>1}}x(w) u_w.
\end{equation}
Before giving the main theorem of this section, we start by giving the following lemma, which is direct from \eqref{conj}. 
\begin{lemma}\label{rmksym}
	Let $w\in\mathcal{S}$. Then, $u_w\in\ker\Phi(w)$ if and only if $u_{w}\in\mathrm{Im}\Phi(1/\overline{w})^\perp$. 
\end{lemma}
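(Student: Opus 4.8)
The plan is to reduce the statement to the elementary linear-algebra identity $(\mathrm{Im}\,M)^{\perp}=\ker(M^{*})$, valid for any $M\in M_d(\setC)$ with respect to the standard Hermitian inner product on $\setC^d$, combined only with the conjugation relation \eqref{conj}. No probabilistic or analytic input is needed; this is purely a remark about how $\ker\Phi$ and $\mathrm{Im}\,\Phi$ transform under $z\mapsto 1/\bar z$.

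First I would fix $w\in\mathcal{S}$ and note that $1/\overline{w}\in\setC^{*}$ (since $w\in\setC^{*}$), so $\Phi(1/\overline{w})$ is well defined; by lemma \ref{zeros} one moreover has $1/\overline{w}\in\mathcal{S}$, so both sides of the asserted equivalence genuinely refer to zero data of $\Phi$. Applying \eqref{conj} with $z=w$ gives $\Phi(1/\overline{w})=\Phi(w)^{*}$, and taking adjoints, $\Phi(1/\overline{w})^{*}=\Phi(w)$. Invoking $(\mathrm{Im}\,M)^{\perp}=\ker(M^{*})$ with $M=\Phi(1/\overline{w})$ then yields
\[
\mathrm{Im}\,\Phi(1/\overline{w})^{\perp}=\ker\bigl(\Phi(1/\overline{w})^{*}\bigr)=\ker\Phi(w).
\]
Hence $u_w\in\ker\Phi(w)$ if and only if $u_w\in\mathrm{Im}\,\Phi(1/\overline{w})^{\perp}$, which is exactly the claim.

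There is essentially no obstacle here: the only points requiring a little care are the direction in which \eqref{conj} is read (i.e. $\Phi(1/\overline{z})=\Phi(z)^{*}$, so that passing to the adjoint exchanges the two evaluation points), and the convention that $V^{\perp}$ denotes the orthogonal complement for the Hermitian form on $\setC^{d}$. Once these conventions are pinned down the equivalence is immediate, which is why the statement is flagged as direct from \eqref{conj}.
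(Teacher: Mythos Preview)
Your proof is correct and follows exactly the approach indicated in the paper, which simply states that the lemma is direct from \eqref{conj}. You have made explicit the underlying linear-algebra identity $(\mathrm{Im}\,M)^{\perp}=\ker(M^{*})$ and applied it with $M=\Phi(1/\overline{w})$, which is precisely what the paper intends.
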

The next theorem decomposes $\Phi^{-1}$ along the vectors indexed by zeros of $\Phi$ which are proven to form bases of $\setC^d$.
\begin{theorem}\label{residues} Assume \ref{full_rank} and \ref{multiplicite}. For all $w\in\mathcal{S}$, $\langle u_{1/\overline{w}}, \Phi'(w) u_w \rangle\neq 0$. Then let $\alpha_w\in\setC^*$ and $P_w\in M_d(\setC)$ defined as
	\begin{equation*}
		\alpha_w=\frac{1}{\langle u_{1/\overline{w}}, \Phi'(w) u_w \rangle } \text{ and } P_wx=\langle u_{1/\overline{w}},x\rangle u_w
	\end{equation*}	
	$w\in\mathcal{S}$ and for all $x\in\setC^d$. For all $z\in\setC^* \setminus \mathcal{S}$, we have
	\begin{align}
		\Phi(z)^{-1}=&\sum_{w\in\mathcal{S}}\frac{\alpha_w}{z-w}P_w\label{eq1}\\
		=&\sum_{w\in\mathcal{S}_{<1}}\left(\frac{\alpha_w}{z-w}P_w+ \frac{\alpha_{1/\overline{w}}}{z-1/\overline{w}}P_{1/\overline{w}}\right)\label{eq2}
	\end{align}
	Moreover, $(u_w)_{w\in\mathcal{S}_{<1}}$ and $(u_w)_{w\in\mathcal{S}_{>1}}$ are bases of $\setC^d$. 
\end{theorem}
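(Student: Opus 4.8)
The plan is to treat $\Phi^{-1}$ as a rational matrix-valued function on the Riemann sphere, perform its partial-fraction (Mittag--Leffler) decomposition, and then read off the basis statement from positivity of the zeroth Fourier coefficient $C_0$.

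\emph{Global structure of $\Phi^{-1}$.} First I would record that $\Phi^{-1}$ is holomorphic on $\setC\setminus\mathcal{S}$ and vanishes at $0$ and at $\infty$. Near $z=0$ the matrix $z\Phi(z)=A_{LR}z^2+(A_{LL}+A_{RR})z+A_{RL}$ is holomorphic with value $A_{RL}$, which is invertible under assumption \ref{full_rank} (since $A_{RL}=A_{LR}^{*}$), so $\Phi(z)^{-1}=z\,(z\Phi(z))^{-1}$ extends holomorphically across $0$ with $\Phi(0)^{-1}=0$; and $\Phi(z)/z\to A_{LR}$ as $z\to\infty$ gives $\Phi(z)^{-1}\to 0$ there. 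For each $w\in\mathcal{S}$, assumption \ref{multiplicite} makes $\det\Phi$ have a simple zero at $w$, so Cramer's rule shows $\Phi^{-1}$ has at most a simple pole at $w$; write $\Phi(z)^{-1}=R_w/(z-w)+H_w(z)$ with $H_w$ holomorphic near $w$ and $R_w$ the residue.

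\emph{The residues.} Multiplying $\Phi(z)^{-1}\Phi(z)=I$ by $(z-w)$ and letting $z\to w$ gives $R_w\Phi(w)=0$, and symmetrically $\Phi(w)R_w=0$; hence $\mathrm{Im}\,R_w\subseteq\ker\Phi(w)=\mathrm{Vect}(u_w)$ while $(\ker R_w)^{\perp}\subseteq(\mathrm{Im}\,\Phi(w))^{\perp}=\ker\Phi(w)^{*}=\ker\Phi(1/\overline w)=\mathrm{Vect}(u_{1/\overline w})$ using \eqref{conj} and lemma \ref{rmksym}. So $R_w=\alpha_w\,u_wu_{1/\overline w}^{*}=\alpha_w P_w$ for some scalar $\alpha_w$. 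To pin down $\alpha_w$ I would expand $\Phi(z)=\Phi(w)+(z-w)\Phi'(w)+O((z-w)^2)$ in $\Phi(z)^{-1}\Phi(z)=I$ and compare constant terms, getting $R_w\Phi'(w)+H_w(w)\Phi(w)=I$; applying this to $u_w$ (killed by $\Phi(w)$) yields $R_w\Phi'(w)u_w=u_w$, i.e. $\alpha_w\langle u_{1/\overline w},\Phi'(w)u_w\rangle\,u_w=u_w$. Since $u_w\neq 0$, this simultaneously proves $\langle u_{1/\overline w},\Phi'(w)u_w\rangle\neq 0$, that $\alpha_w$ equals its reciprocal, and that $R_w\neq 0$. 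Then $G(z):=\Phi(z)^{-1}-\sum_{w\in\mathcal{S}}\tfrac{\alpha_w}{z-w}P_w$ is entrywise entire (every principal part cancels, and $G$ is holomorphic at $0$) and tends to $0$ at $\infty$, so $G\equiv 0$ by Liouville; this is \eqref{eq1}, and pairing $w\in\mathcal{S}_{<1}$ with $1/\overline w\in\mathcal{S}_{>1}$ (lemma \ref{zeros}, together with the absence of zeros on $S^1$ from lemma \ref{Invertible}) gives \eqref{eq2}.

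\emph{The basis statement.} Since $\vert\mathcal{S}_{<1}\vert=\vert\mathcal{S}_{>1}\vert=d$ (assumption \ref{multiplicite}), it suffices to show that each of $(u_w)_{w\in\mathcal{S}_{<1}}$ and $(u_w)_{w\in\mathcal{S}_{>1}}$ spans $\setC^d$, and this is the only step that is not purely formal; it is where Hermitian positive-definiteness enters. The matrix $C_0=\frac{1}{2\pi}\int_0^{2\pi}\Phi(e^{\ii\theta})^{-1}\,d\theta$ is an average of positive-definite matrices (lemma \ref{Invertible}), hence positive-definite and invertible. Writing $C_0=\frac{1}{2\pi\ii}\oint_{\vert z\vert=1}\Phi(z)^{-1}\frac{dz}{z}$ and evaluating by residues — with no contribution at $0$ because $\Phi^{-1}$ vanishes there — gives $C_0=\sum_{w\in\mathcal{S}_{<1}}\frac{1}{w}R_w$, so every column of $C_0$ lies in $\mathrm{span}\{u_w:w\in\mathcal{S}_{<1}\}$; invertibility of $C_0$ forces that span to be all of $\setC^d$. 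Collapsing the same contour toward $\infty$ (where $\Phi(z)^{-1}/z=O(1/z^2)$, so there is no residue at infinity) gives instead $C_0=-\sum_{w\in\mathcal{S}_{>1}}\frac{1}{w}R_w$ and the same conclusion for $\mathcal{S}_{>1}$; alternatively the outside statement follows from the inside one applied to $SAS^{-1}$ via lemma \ref{leftright}. The main obstacle is exactly this last point: one must locate an accessible invertible object built from $\Phi^{-1}$ — here $C_0$ and its positivity — that cleanly separates the inside from the outside zeros, rather than be left with relations such as $\sum_{w\in\mathcal{S}}\alpha_w P_w=A_{LR}^{-1}$ that entangle them.
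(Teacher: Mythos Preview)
Your proof is correct. The overall architecture matches the paper's (rational structure of $\Phi^{-1}$, vanishing at $0$ and $\infty$, simple-pole residues, then positivity for the basis claim), but two steps are carried out by different means.

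For the residues, the paper works with the adjugate: it shows $\mathrm{adj}(\Phi(w))=\beta_w P_w$ and identifies $\beta_w$ by differentiating $\Phi\,\mathrm{adj}\Phi=(\det\Phi)I_d$ at $w$; you instead expand $\Phi^{-1}\Phi=I$ directly near $w$ to obtain $R_w\Phi'(w)u_w=u_w$, which is slicker and simultaneously delivers $\langle u_{1/\overline w},\Phi'(w)u_w\rangle\neq 0$ without invoking the rank splitting the paper uses. For the partial-fraction step the paper uses a Cauchy integral over a large circle while you invoke Liouville on the difference; these are the same idea packaged differently.

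For the basis statement the arguments genuinely diverge. The paper stays \emph{pointwise}: if some nonzero $x$ were orthogonal to all $u_w$ with $w\in\mathcal{S}_{<1}$, then in the grouping \eqref{eq2} every term $\langle x,P_wx\rangle$ and $\langle x,P_{1/\overline w}x\rangle$ vanishes (each contains a factor $\langle u_w,x\rangle$), so $\langle x,\Phi(e^{\ii\theta})^{-1}x\rangle=0$, contradicting positive-definiteness of $\Phi(e^{\ii\theta})^{-1}$. You instead pass to the \emph{averaged} object $C_0$, compute it by residues as $\sum_{w\in\mathcal{S}_{<1}}w^{-1}R_w$ (respectively $-\sum_{w\in\mathcal{S}_{>1}}w^{-1}R_w$), and use invertibility of $C_0$ to force the span to be all of $\setC^d$. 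Your route has the pleasant side effect of anticipating the paper's corollary \ref{expliFC}; the paper's route is shorter and avoids the extra contour computation. Both exploit exactly the same underlying positivity, just at different levels.
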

\begin{proof}
	The proof is based on residue calculus. First, recall
	\begin{equation}\label{adjugate}
		\Phi(z)^{-1}=\frac{1}{\det\Phi(z)}\mathrm{adj}(\Phi(z))
	\end{equation}
	for all $z\in\setC^*\setminus\mathcal{S}$, where $\mathrm{adj}(\Phi(z))$ is the adjugate matrix of $\Phi(z)$. We know that $\det\Phi(z)$ is a linear combination of $z^i$ for $i\in\set{-d,\dots,d}$ and the coefficients of order $d$ and $(-d)$ are non zero under assumption \ref{full_rank}, due to lemma \ref{zeros}. Since $\mathrm{adj}(\Phi(z))$ is a linear combination of $z^i$ with $i\in\set{-d+1,\dots,d-1}$, it comes
	$$\lim_{\vert z \vert \to 0}\Phi(z)^{-1}=0 \text{ and }\lim_{\vert z \vert \to \infty}\Phi(z)^{-1}=0.$$
	We let $\mathcal{C}_R$ be the circle centered at origin of radius $R>0$. Take $R$ sufficiently large so that $\mathcal{S}\subset \mathbb{D}(0,R)$, the disk centered at origin of radius $R$.  For any $z\in\setC^*\setminus \mathcal{S}$, we have
	\begin{align*}
		\int_{\mathcal{C}_R}\Phi(u)^{-1}\frac{1}{u-z}du &=2\pi\ii\left(\Phi(z)^{-1}+\sum_{w\in\mathcal{S}\cup\set{0}}\mathrm{Res}(\Phi^{-1},w)\frac{1}{w-z}\right)\\
		&=2\pi\ii\left(\Phi(z)^{-1}+\sum_{w\in\mathcal{S}}\mathrm{Res}(\Phi^{-1},w)\frac{1}{w-z}\right)
	\end{align*}
	where we used the limit at $0$ previously computed. Moreover, using the limit of $\Phi(u)^{-1}$ when $\vert u \vert\to\infty$, we get 
	$$\int_{\mathcal{C}_R}\Phi(u)^{-1}\frac{1}{u-z}du=\mathrm{Res}\left(\Phi^{-1}\frac{1}{\bullet-z},\infty\right)=0.$$
	We deduce,
	$$\Phi(z)^{-1}=\sum_{w\in\mathcal{S}}\mathrm{Res}(\Phi^{-1},w)\frac{1}{z-w}.$$
	To get \eqref{eq1}, we only have to show that, for all $w\in\mathcal{S}$,
	$$\mathrm{Res}(\Phi^{-1},w)=\lim_{u\to w}(u-w)\Phi(u)^{-1}=\alpha_wP_w.$$
	Fix any $w\in\mathcal{S}$. We are therefore interested in a Taylor expansion of \eqref{adjugate}. We first have, for any $u\to w$,
	\begin{equation}\label{DL_det}
		\det\Phi(u)=(\det\Phi(w))'(u-w)+o(u-w)
	\end{equation}
	and we check that $(\det(\Phi(w))'\neq 0$ as $w$ is assumed to be of multiplicity $1$ by assumption \ref{multiplicite}. We also have that, when $u\to w$,
	\begin{equation}\label{DL_adj}
		\mathrm{adj}(\Phi(u))=\mathrm{adj}(\Phi(w))+(\mathrm{adj}(\Phi(w)))'(u-w)+o(u-w).
	\end{equation}
	We need to check that none of these terms is zero. We have, for any $z\in\setC^*$,
	\begin{equation}\label{formule_adj}
		\Phi(z)\mathrm{adj}(\Phi(z))=\det(\Phi(z))I_d,\text{ and }\mathrm{adj}(\Phi(z))\Phi(z)=(\det\Phi(z))I_d.
	\end{equation}
	It first gives
	$$\Phi(z)\mathrm{adj}(\Phi(w))=\mathrm{adj}(\Phi(w))\Phi(w)=0.$$
	So that, by assumption \ref{multiplicite}, and lemma \ref{rmksym}, $\mathrm{adj}(\Phi(w))$ is of form $\mathrm{adj}(\Phi(w))=\beta_wP_w$ with $\beta_w\in\setC$. Now, taking the derivative of \eqref{formule_adj} in $w$ gives
	$$\beta_w\Phi'(w)P_w+\Phi(w)(\mathrm{adj}(\Phi(w)))'=(\det\Phi(w))' I_d.$$
	This is the sum of two linear maps: the first one is of rank at most $1$ and the second one of rank at most $d-1$. It forces $\beta_w\neq 0$ and $(\mathrm{adj}(\Phi(w)))'\neq 0$. Now, multiplying this identity on the left by $u_{1/\overline{w}}^*$ and using lemma \ref{rmksym}, gives for all $x\in\setC^d$,
	$$\beta_w\langle u_{1/\bar{w}}, \Phi'(w)u_w \rangle \langle u_{1/\bar{w}}, x \rangle=(\det\Phi(w))'\langle u_{1/\bar{w}},x\rangle$$
	so that $\langle u_{1/\overline{w}}, \Phi'(w) u_w \rangle\neq 0$ and $\beta_w=\alpha_w(\det\Phi(w))'$. Combining \eqref{DL_det} and \eqref{DL_adj} with \eqref{adjugate}, we end up with
	$$\Phi(u)^{-1}=\frac{1}{u-w}\alpha_wP_w+\frac{1}{(\det\Phi(w))'}(\mathrm{adj}(\Phi(w)))'+o(1)$$
	as $u\to w$ and \eqref{eq1} is proved. The second expression \eqref{eq2} follows from a change of variable in the sum \eqref{eq1}. We finally show that $(u_w)_{w\in\mathcal{S}_{<1}}$ is a basis. If $(u_w)_{w\in\mathcal{S}_{<1}}$ is not a basis, there exists $x\in(\mathrm{Vect}((u_w)_{w\in\mathcal{S}_{<1}}))^{\perp}$ non-zero so that for all $\theta\in[0,2\pi]$,
	\begin{align*}
		\langle x,\Phi(e^{\ii\theta})^{-1} x\rangle =\sum_{w\in\mathcal{S}_{<1}}\left( \frac{\alpha_w}{e^{\ii\theta}-w}\langle x, P_wx\rangle+\frac{\alpha_{1/\overline{w}}}{e^{\ii\theta}-1/\overline{w}}\langle x, P_{1/\overline{w}}x\rangle\right)  =0
	\end{align*}
	which is in contradiction with positive definiteness of $\Phi(e^{\ii\theta})^{-1}$. Then, $(u_w)_{w\in\mathcal{S}_{<1}}$ is a basis of $\setC^d$. With similar arguments, so is $(u_w)_{w\in\mathcal{S}_{>1}}$.
\end{proof}

We recall that the Fourier coefficients of $\Phi^{-1}$ are denoted $(C_k)_{k\in\setZ}$, see \eqref{C_k}. An application of the latter lemma is the decomposition of these Fourier coefficients along the bases indexed by zeros of $\Phi$.
\begin{corollary}\label{expliFC} Under assumptions and notations of theorem \ref{residues}, for all $k\geq 0$, we have
	\begin{equation}
		C_{k}=\sum_{w\in\mathcal{S}_{>1}}-\frac{\alpha_w}{w^{k+1}} P_w\text{ and }C_{-k}=\sum_{w\in\mathcal{S}_{<1}}w^{k-1}\alpha_wP_w.
	\end{equation}
	
\end{corollary}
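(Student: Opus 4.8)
The plan is to substitute the partial-fraction decomposition \eqref{eq1} of Theorem \ref{residues} into the definition \eqref{fc} of the Fourier coefficients $C_k=C_k(\Phi^{-1})$ and to expand each summand on the unit circle. Since $\mathcal{S}$ is finite, one pole can be treated at a time. For $z=e^{\ii\theta}$ and $w\in\mathcal{S}_{<1}$ (so $|w|<1$) I would use the uniformly convergent expansion
\begin{equation*}
	\frac{1}{e^{\ii\theta}-w}=\frac{e^{-\ii\theta}}{1-we^{-\ii\theta}}=\sum_{n\geq 1}w^{n-1}e^{-\ii n\theta},
\end{equation*}
which involves only strictly negative frequencies, while for $w\in\mathcal{S}_{>1}$ (so $|w|>1$) I would use
\begin{equation*}
	\frac{1}{e^{\ii\theta}-w}=-\frac1w\cdot\frac{1}{1-e^{\ii\theta}/w}=-\sum_{n\geq 0}\frac{1}{w^{n+1}}e^{\ii n\theta},
\end{equation*}
which involves only non-negative frequencies. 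Interchanging the finite sum over $\mathcal{S}$ with the integral in \eqref{fc} and appealing to uniqueness of Fourier expansions then lets one read off $C_k=-\sum_{w\in\mathcal{S}_{>1}}\alpha_w w^{-k-1}P_w$ for $k\geq 0$ and $C_{-k}=\sum_{w\in\mathcal{S}_{<1}}\alpha_w w^{k-1}P_w$ for $k\geq 1$.

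The one step needing an extra word is the frequency $k=0$: the first formula already applies, giving $C_0=-\sum_{w\in\mathcal{S}_{>1}}\alpha_w w^{-1}P_w$, while the second claimed formula reads $C_0=\sum_{w\in\mathcal{S}_{<1}}\alpha_w w^{-1}P_w$ and the inside zeros contributed nothing to $C_0$ in the expansion above. I would reconcile the two using the identity $\sum_{w\in\mathcal{S}}\alpha_w w^{-1}P_w=0$, which falls out of \eqref{eq1} upon letting $z\to 0$ and invoking $\lim_{z\to 0}\Phi(z)^{-1}=0$, a fact already established within the proof of Theorem \ref{residues}.

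A tidier route, handling $k=0$ on the same footing as everything else, is straight residue calculus: write $C_k=\frac{1}{2\pi\ii}\oint_{|z|=1}\Phi(z)^{-1}z^{-k-1}\,dz$ and $C_{-k}=\frac{1}{2\pi\ii}\oint_{|z|=1}\Phi(z)^{-1}z^{k-1}\,dz$. For $k\geq 0$ I would push the first contour outward to infinity: since $\Phi(z)^{-1}=O(|z|^{-1})$ as $|z|\to\infty$ (from the orders of $\mathrm{adj}\,\Phi$ and $\det\Phi$ recorded in Lemma \ref{zeros} and in the proof of Theorem \ref{residues}), the integrand is $O(|z|^{-k-2})$, nothing comes from $\infty$, and one is left with the residues at the poles $w\in\mathcal{S}_{>1}$. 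For the second contour I would push inward, noting that the possible singularity at $0$ is removable because $\Phi(z)^{-1}=O(|z|)$ near the origin, so that only the poles $w\in\mathcal{S}_{<1}$ remain. Since $\mathrm{Res}(\Phi^{-1},w)=\alpha_w P_w$ by Theorem \ref{residues} and $z^{-k-1}$ (resp. $z^{k-1}$) is holomorphic at $w$ with value $w^{-k-1}$ (resp. $w^{k-1}$), both displayed formulas follow at once and uniformly in $k\geq 0$.

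I do not expect a real obstacle here: once \eqref{eq1} is available the computation is routine. The only items calling for care are the bookkeeping at the boundary frequency $k=0$ and the asymptotic behaviour of $\Phi^{-1}$ near $0$ and near $\infty$, both of which are already controlled by Lemma \ref{zeros} and the proof of Theorem \ref{residues}.
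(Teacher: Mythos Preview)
Your argument is correct. For $C_{-k}$ it coincides with the paper's: substitute \eqref{eq1} into the contour integral on the unit circle and apply the residue theorem, with special attention to $k=0$. Where you diverge is in the treatment of $C_k$: you compute it directly by pushing the contour outward (or, equivalently, by expanding $\frac{1}{e^{\ii\theta}-w}$ in non-negative frequencies for $w\in\mathcal{S}_{>1}$), whereas the paper never integrates for $C_k$ at all. Instead it takes the Hermitian conjugate of the formula already obtained for $C_{-k}$, invokes $C_k=C_{-k}^*$ from \eqref{hermf}, and uses the algebraic identity $\Phi'(w)w^2=-\Phi'(1/\overline{w})^*$ (together with the bijection $w\leftrightarrow 1/\overline{w}$ between $\mathcal{S}_{<1}$ and $\mathcal{S}_{>1}$) to check that $(\alpha_w P_w)^*$ matches the corresponding term indexed by $1/\overline{w}$. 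Your route is more self-contained and treats $k\geq 0$ uniformly; the paper's route is shorter once $C_{-k}$ is in hand and makes the built-in Hermitian symmetry of the decomposition explicit.
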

\begin{proof}
	Let $k\geq 0$, and denote $S^1$ the complex unit circle. We have, from theorem \ref{residues},
	\begin{align*}
		C_{-k}=\frac{1}{2\ii\pi}\sum_{w\in\mathcal{S}}\alpha_wP_w\int_{\mathcal{C}_1}\frac{z^{k-1}}{z-w}dz.
	\end{align*}
	A little care for the case $k=0$ and residues theorem give the result. The identity for $C_k$ 	is deduced from using $C_k=C_{-k}^*$, see \eqref{hermf}, together with the identity
	$$\Phi'(w)w^2=-\Phi'\left(1/\bar{w}\right)^*$$
	derived from \eqref{defphi} and using hermitian property of $A$.
\end{proof}
\begin{rmk}\label{decrease}
	As mentioned in theorem \ref{Fourier}, the Fourier coefficients $(C_k)_{k\in\setZ}$ give the covariance matrix in thermodynamic limit. The latter corollary ensures that these covariances decrease exponentially. 
\end{rmk}
\paragraph{Short conclusion.} We introduced $(u_w)_{w\in\mathcal{S}_{<1}}$ and $(u_w)_{w\in\mathcal{S}_{>1}}$ two families that parametrize $\Phi^{-1}$ and its Fourier coefficients $(C_k)_{k\in\setZ}$. Other objects related to the model  will be decomposed along these families in sections \ref{eigen-bound} and \ref{lawid}. 
\section{From periodic boundary conditions to translation invariant Gaussian Markov on $\setZ$}\label{fromtftomarkov}
Motivated by further comparisons with the eigen-boundary case, this section deals with basics results concerning the periodic case and structures behind the theorem \ref{Fourier}.  We will see that the matrices involved when considering periodic Gaussian Markov processes are block-circulant and therefore diagonal in the basis of Fourier. This allows an easy computation of the correlation function in the thermodynamic limit. Moreover, this gives a natural candidate for the Hilbert space which carries the infinite dimensional Gaussian process on $\setZ$ which, as it will be shown, verifies the Markov property.
\subsection{Circulant and multiplication operators.}\label{section_circulant_multiplication}
We start by recalling the definitions of block-circulant and multiplication operators. Let $d$, $P\in\setN^*$. In the following, we will be considering the Hilbert spaces $(\setZ/P\setZ\to\setC^d,\langle \cdot, \cdot \rangle_{\setC^{dP}})$ and $(l^2(\setZ\to\setC^d),\langle\cdot,\cdot\rangle_{l^2})$ with $\langle \cdot, \cdot \rangle_{\setC^{dP}}$ and $\langle\cdot,\cdot\rangle_{l^2}$ usual inner products. In these contexts, a $dP$-dimensional block-matrix $B$ (resp. an infinite dimensional block operator $B$) with $d$-dimensional blocks is
$$B=(B_{k,l})_{k,l\in\lbrace 0,\dots,P-1\rbrace}\text{ (resp. }B=(B_{k,l})_{k,l\in\setZ})$$
with $B_{k,l}\in M_{d}(\setC)$ for all $k$ and $l$. 
\paragraph{Circulant and multiplication operators on $\setZ/P\setZ$.} Let $\Psi: S^1\to M_d(\setC)$ continuous. Its discrete Fourier coefficients are defined in the following way
\begin{equation}
	C_k^{(P)}(\Psi)=\frac{1}{\sqrt{P}}\sum_{i=0}^{P-1}\Psi(\omega_P^i)\omega_P^{-ki}\in M_d(\setC)
\end{equation}
for all $k\in\lbrace 0,\dots,P-1\rbrace$. This is, of course, the discrete version of the Fourier coefficients introduced in \eqref{fc}. We denote $C^{(P)}(\Psi)=(C_{l-k}^{(P)}(\Psi))_{k,l\in\lbrace 0,\dots,P-1\rbrace}\in M_{dP}(\setC)$ the block-circulant matrix generated by discrete Fourier coefficients of $\Psi$.  Such a matrix is block-diagonalizable via discrete Fourier transform into the multiplication operator on $\setZ/P\setZ$ generated by $\Psi$:
\begin{align*}
	M^{(P)}(\Psi):\left(\setZ/P\setZ\to\setC^d\right)&\to \left(\setZ/P\setZ\to\setC^d\right)\\
	(x_k)_{k=0}^{P-1}&\mapsto\left(\Psi(\omega_{P}^{k})x_k\right)_{k=0}^{P-1}.
\end{align*}
\paragraph{Circulant and multiplication operators on $\setZ$.} Let $\Psi: S^1\to M_d(\setC)$ continuous and  for all $k\in\setZ$, $C_k(\Psi)\in M_d(\setC)$ its $k$-th Fourier coefficient already defined in \eqref{fc}.  We denote $C(\Psi)=(C_{l-k}(\Psi))_{k,l\in\setZ}$ the infinite dimensional block-circulant operator acting on $l^2(\setZ\to\setC^d)$. This operator, via Fourier correspondence between $l^2(\setZ\to\setC^d)$ and $L^2(S^1\to\setC^d)$, is linked to
\begin{align*}
	M(\Psi):L^2(S^1\to\setC^d)&\to L^2(S^1\to\setC^d)\\
	f &\mapsto\Psi f
\end{align*}
the multiplication operator on $L^2(S^1\to\setC^d)$ generated by $\Psi$.
\subsection{Gaussian Markov processes on $\setZ/P\setZ$}
When working with periodic boundary conditions, the matrix appearing in the density of the Gaussian Markov process is (tridiagonal) block-circulant and is therefore block-diagonalizable using discrete Fourier transform. We first briefly summarize this fact.

Let $A\in H_{2d}^+(\setC)$ fixed and $\Phi\coloneqq \Phi_A$, see section \ref{phi}. A Gaussian Markov process, on the periodic lattice $\setZ/P\setZ$, $X^{(P),\mathrm{per}}=(X_k^{(P),\mathrm{per}})_{k=0}^{P-1}$ has density proportional to
\begin{equation}\label{periodic_density}
	g_{X^{(P),\mathrm{per}}}(x^{(P)})\propto e^{-\frac{1}{2}(x^{(P)})^*Q_{A,\mathrm{per}}^{(P)}x^{(P)}}
\end{equation}
for all $x^{(P)}=(x_k^{(P)})_{k=0}^{P-1}\in\setC^{dP}$ where $Q_{A,\mathrm{per}}^{(P)}=C^{(P)}(\Phi)$ the $dP$-dimensional block-circulant matrix generated by $\Phi$:
\begin{equation}
	Q_{A,\mathrm{per}}^{(P)}= \begin{pmatrix}
		T & A_{LR} & 0 & 0 & \dots & 0 & A_{RL} \\
		A_{RL} & T & A_{LR} & 0 & \dots & 0 & 0\\
		0 & A_{RL} & T & A_{LR} & \ddots & 0 & 0\\
		\vdots & \ddots & \ddots & \ddots & \ddots & \vdots & \vdots\\
		0 & \dots & 0 & \ddots & T & A_{LR} & 0\\
		A_{LR} & 0 & \dots & 0 & 0 & A_{RL}& T
	\end{pmatrix},
\end{equation}
with $T=A_{LL}+A_{RR}$. Recalling 
$$\widehat{X}_k^{(P)}=\frac{1}{\sqrt{P}}\sum_{i\in\setZ/P\setZ}X_i^{(P),\mathrm{per}}\omega_{P}^{ik},$$
we recover the result of the theorem \ref{Fourier}, which states that $\widehat{X}^{(P)}=(\widehat{X}_k^{(P)})_{k=0}^{P-1}$ has density proportional to
$$g_{\widehat{X}^{(P)}}(x^{(P)})\propto e^{-\frac{1}{2}(x^{(P)})^*M^{(P)}(\Phi)x^{(P)}}.$$
for all $x^{(P)}=(x_k^{(P)})_{k=0}^{P-1}\in\setC^{dP}$. We end up with the following lemma. 
\begin{lemma}\label{periodic}
	The following assertions are equivalent
	\begin{itemize}
		\item[i)] $X^{(P),\mathrm{per}}$ is a centered Gaussian Markov process on $\setZ/P\setZ$ of weight $\ee_{\alpha,A}$.
		\item[ii)] $X^{(P),\mathrm{per}}$ is a centered Gaussian process on $H^{(P)}=(\setZ/P\setZ\to\setC^d,\langle \cdot, C^{(P)}(\Phi^{-1})\cdot\rangle_{\setC^{dP}})$. In other words, for any $E\subset \setZ/P\setZ$ and $(\alpha_k)_{k\in E}$ $\setC^d$-valued sequence, we have
		$$\mathbb{E}\left[\exp\left(\ii\sum_{k\in E}\mathrm{Re }\langle \alpha_k, X_k^{(P),\mathrm{per}} \rangle_{\setC^d}\right)\right]=\exp\left(-\frac{1}{2}\sum_{(k,l)\in E^2}\langle\alpha_k,C_{l-k}^{(P)}(\Phi^{-1})\alpha_l\rangle_{\setC^d}\right).$$
		\item[iii)] $\widehat{X}^{(P)}$ is a centered Gaussian process on $\widehat{H}^{(P)}=(\setZ/P\setZ\to\setC^d,\langle \cdot, M^{(P)}(\Phi^{-1})\cdot\rangle_{\setC^{dP}})$.
	\end{itemize}
\end{lemma}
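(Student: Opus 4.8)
The plan is to establish the cycle $i) \Leftrightarrow ii) \Leftrightarrow iii)$ using the block-circulant/multiplication dictionary from Section~\ref{section_circulant_multiplication} together with the observation made just before the statement, that $i)$ is equivalent to $X^{(P),\mathrm{per}}$ having density proportional to $\exp(-\tfrac12 (x^{(P)})^* C^{(P)}(\Phi) x^{(P)})$, i.e.\ to its precision (inverse covariance) matrix being the block-circulant matrix $C^{(P)}(\Phi)$.

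First, for $i) \Leftrightarrow ii)$, I would recall that a centered circularly symmetric Gaussian vector on $\setC^{dP}$ with density proportional to $\exp(-\tfrac12 x^* \mathcal{Q} x)$, with $\mathcal{Q}\in H_{dP}^+(\setC)$, is characterized by its covariance matrix being $\mathcal{Q}^{-1}$, equivalently by the characteristic function $\mathbb{E}[\exp(\ii\sum_k \mathrm{Re}\langle \alpha_k, X_k\rangle)] = \exp(-\tfrac12\sum_{k,l}\langle \alpha_k, (\mathcal{Q}^{-1})_{k,l}\alpha_l\rangle)$; here I would take a line to spell out the complex circularly symmetric convention so that this matches the formula in $ii)$. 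The point is then simply that the precision matrix of $X^{(P),\mathrm{per}}$ equals $C^{(P)}(\Phi)$ iff its covariance matrix equals $C^{(P)}(\Phi^{-1})$, which reduces to the identity $C^{(P)}(\Phi)^{-1} = C^{(P)}(\Phi^{-1})$.

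That identity is the one genuinely substantive point, and I would prove it by invoking the structure of block-circulant matrices: the discrete Fourier transform $\mathcal{F}_P$ (with the $1/\sqrt P$ normalization, hence unitary) conjugates $C^{(P)}(\Psi)$ to the block-diagonal multiplication operator $M^{(P)}(\Psi)$ for any continuous $\Psi\colon S^1\to M_d(\setC)$, and $\Psi\mapsto C^{(P)}(\Psi)$ is multiplicative under pointwise product on the Fourier nodes $\omega_P^k$. Therefore $C^{(P)}(\Phi)\,C^{(P)}(\Phi^{-1}) = C^{(P)}(\Phi\,\Phi^{-1}) = C^{(P)}(I_d) = I_{dP}$, where I use that $\Phi(\omega_P^k)$ is invertible for every $k$ by Lemma~\ref{Invertible} (no zero of $\Phi$ on the unit circle). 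Equivalently, one can argue entirely in Fourier space: $M^{(P)}(\Phi)$ and $M^{(P)}(\Phi^{-1})$ are block-diagonal with mutually inverse blocks $\Phi(\omega_P^k)$ and $\Phi(\omega_P^k)^{-1}$.

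Finally, for $ii) \Leftrightarrow iii)$: since $\widehat{X}^{(P)} = \mathcal{F}_P X^{(P),\mathrm{per}}$ with $\mathcal{F}_P$ a fixed invertible (unitary) real-linear map, $X^{(P),\mathrm{per}}$ is a centered Gaussian vector iff $\widehat{X}^{(P)}$ is, and covariances transform by conjugation, so $\mathrm{Cov}(\widehat{X}^{(P)}) = \mathcal{F}_P\, C^{(P)}(\Phi^{-1})\, \mathcal{F}_P^{*} = M^{(P)}(\Phi^{-1})$ exactly when $\mathrm{Cov}(X^{(P),\mathrm{per}}) = C^{(P)}(\Phi^{-1})$, again by the circulant/multiplication correspondence. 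Apart from this the argument is bookkeeping; the only care needed is in fixing the complex Gaussian conventions so that the characteristic-function formula of $ii)$ is literally the one for a centered Gaussian with covariance $C^{(P)}(\Phi^{-1})$, and in citing Lemma~\ref{Invertible} for the invertibility of $\Phi$ on $S^1$.
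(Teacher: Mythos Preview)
Your argument is correct and is essentially the one the paper has in mind: the lemma is stated without a separate proof because it follows from the discussion immediately preceding it, namely that $Q_{A,\mathrm{per}}^{(P)}=C^{(P)}(\Phi)$ is block-circulant and hence conjugated by the unitary DFT to $M^{(P)}(\Phi)$, together with Lemma~\ref{Invertible} for the invertibility of $\Phi$ on $S^1$. Your write-up just makes explicit the step $C^{(P)}(\Phi)^{-1}=C^{(P)}(\Phi^{-1})$ and the covariance transformation under $\mathcal{F}_P$, which is exactly the content of the commutative diagram~\eqref{periodicgdiagram}.
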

Covariance matrices of $X^{(P),\mathrm{per}}$ and $\widehat{X}^{(P)}$and their inverses are summarized in the following commutative diagram 
\begin{equation}\label{periodicgdiagram}
	\begin{tikzcd}
		Q_{A,\mathrm{per}}^{(P)} \arrow[r, "^{-1}"] \arrow[d, "\mathrm{DFT}"]
		& C^{(P)}(\Phi^{-1})\arrow[l]\arrow[d,"\mathrm{DFT}"] \\
		M^{(P)}(\Phi)\arrow[r,"^{-1}"]\arrow[u]
		& M^{(P)}(\Phi^{-1})\arrow[l]\arrow[u]
	\end{tikzcd}
\end{equation} 

where "DFT" stands for Discrete Fourier Transform. Upper line (resp. lower) corresponds to
$X^{(P),\mathrm{per}}$ (resp. $\widehat{X}^{(P)})$ and left column (resp. right) corresponds to inverse of covariance matrices (resp. covariance matrices). 
\begin{rmk}
	We insist that , for any $P$ there exists such  a diagram and Hilbert spaces, $H^{(P)}$ and $\widehat{H}^{(P)}$. However, none of them are connected either by any projection nor any restriction. See \eqref{eigendiagram} for a similar discussion under eigen-boundary conditions instead of periodic boundary conditions.
\end{rmk}
\subsection{Thermodynamic limit}
Now that the case $\setZ/P\setZ$ is understood, we compute the marginal laws as $P\to\infty$ and discuss the Hilbert space obtained at the limit.  
\paragraph{Convergence in law for finite dimensional marginal laws.}
We introduce
\begin{align*}
	(\setZ\to\setC^d)&\to(\setZ/P\setZ\to\setC^d)\\
	(\alpha_k)_{k\in\setZ}&\mapsto (\alpha_k^{(P)})_{k=0}^{P-1}=\begin{cases}
		\alpha^{(P)}_k=\alpha_{k} \text{ if } k\in [0,P/3[\\
		\alpha^{(P)}_{P-k}=\alpha_{-k} \text{ if }k\in [1,P/3[\\
		\alpha^{(P)}_k=0 \text{ elsewhere.}
	\end{cases}
\end{align*}
and, with abuse of notation, we let $X_\alpha^{(P),\mathrm{per}}=X_{\alpha^{(P)}}^{(P),\mathrm{per}}\coloneqq \sum_{k=0}^{P-1} \langle\alpha_k^{(P)}, X_k^{(P),\mathrm{per}}\rangle$ for all $(\alpha_k)_{k\in\setZ}$ $\setC^d$-valued sequence. 
Let $(\alpha_k)_{k\in\setZ}$ a $\setC^d$-valued sequence with bounded support associated to a sequence $(\alpha_k^{(P)})_{k=0}^{P-1}$. Lemma \ref{periodic} gives
\begin{equation}\label{cvlaw}
	\lim_{P\to\infty}\mathbb{E}\left[\exp\left(\ii\mathrm{Re } X_\alpha^{(P),\mathrm{per}}\right) \right]=\exp\left(-\frac{1}{2}\sum_{(k,l)\in \setZ^2}\langle \alpha_k,C_{l-k}(\Phi^{-1})\alpha_l\rangle\right).
\end{equation}
\paragraph{Law of $n$ consecutive vertices.} From \eqref{cvlaw}, we check that any $n\in\setN^*$ consecutive vertices converge in law to a centered Gaussian random variable with block-Toeplitz covariance matrix $\Sigma^{(n-1)}=\left(C_{l-k}(\Phi^{-1})\right)_{k,l\in\set{1,\dots,n}}$. The Toeplitz property ensures that the limit marginal laws are translation invariant. 
\begin{rmk}\label{fourier_solvable}
	This convergence in law highlights the importance of lemma \ref{Invertible}. Fourier coefficients of $\Phi^{-1}$ are well defined because $\Phi$ is positive definite on the unit circle. This guarantees that we can compute many quantities of interest using periodic boundary conditions.
\end{rmk}
\paragraph{Infinite volume Gibbs measure.} 
We now give the infinite volume Gibbs measure associated to the Gaussian Markov process we constructed through thermodynamic limit. We let 
$$H=L^2(S^1\to\setC^d)$$
the Hilbert space endowed with the following norm, for any $f\in H$,
\begin{equation}
	\lVert f \rVert^2=\langle f, M(\Phi^{-1}) f \rangle_{L^2(S^1\to\setC^d)}
\end{equation}
We introduce a Gaussian process $X$ as an isometry from $H$ to a probability space:
$$X:H\to L^2(\Omega,\mathcal{F},\mathbb{P}).$$

Let $(e_i)_{i=1}^d$ be the canonical basis of $\setC^d$. This defines a translation invariant Gaussian process on $\setZ$ through
\begin{equation}\label{Gibbs_process}
	X_k=(X_{k,i})_{i=1}^d=(X(e^{\ii k\bullet}e_i))_{i=1}^d
\end{equation}
for all $k\in\setZ$, where $e^{\ii k\bullet}:\theta\mapsto e^{\ii k\theta}$ for all $\theta\in[0,2\pi]$. A direct computation shows that the process $(X_k)_{k\in\setZ}$ has the same marginal laws as the ones of the limit Gaussian Markov process computed in \eqref{cvlaw}. This gives the infinite volume Gibbs measure associated to the model. Remark that, due to infinite dimension, we are now forced to abandon densities and work with the (infinite dimensional) covariance matrix.

As for $P$ finite, the following operators are involved 
$$\begin{tikzcd}
	Q_A \arrow[r, "^{-1}"] \arrow[d, "\mathrm{FT}"]
	& C(\Phi^{-1})\arrow[l]\arrow[d,"\mathrm{FT}"] \\
	M(\Phi)\arrow[r,"^{-1}"]\arrow[u]
	& M(\Phi^{-1})\arrow[l]\arrow[u]
\end{tikzcd}$$
where "FT" stands for Fourier Transform and $Q_A=C(\Phi)$ the block-circulant operator generated by $\Phi$, see subsection \ref{section_circulant_multiplication}. Of course, as probabilists, only right part is interesting since it gives covariance matrices. Nevertheless, tridiagonality of $Q_A$ is reminiscent of our construction of a "nearest neighbor" model and, as we saw in section \ref{phi}, it leads to a second order recursive relation on Fourier coefficients of $\Phi^{-1}$. Therefore, we expect the Gaussian process $X$ to satisfy the Markov property. Indeed, this can been proved either by showing that the thermodynamic limit preserves the Markov property or by working directly on the covariance matrix of $X$. This is the purpose of the following theorem.
\begin{theorem}[Markov property]\label{markov_prop}
	The Gaussian process $(X_k)_{k\in\setZ}$ defined in \eqref{Gibbs_process} is Markov. 
\end{theorem}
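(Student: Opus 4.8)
The plan is to peel off the infinite-volume structure in two steps: first reduce the spatial Markov property of Definition~\ref{markov} to a statement about finite marginals, and then prove that statement --- block-tridiagonality of the precision matrix of $(X_0,\dots,X_P)$ --- from the block-tridiagonality of the precision operator $Q_A=C(\Phi)$ via a Schur complement.

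For the reduction, by the remark following Definition~\ref{markov} it suffices to verify the factorization of Definition~\ref{markov} for connected domains, and up to relabelling such a domain is an interval $[0,P]$. By the Fourier construction \eqref{Gibbs_process} together with \eqref{cvlaw}, the vector $(X_0,\dots,X_P)$ is centered Gaussian on $\setC^{d(P+1)}$ with covariance the block-Toeplitz matrix $\Sigma^{(P)}=\big(C_{l-k}(\Phi^{-1})\big)_{0\le k,l\le P}$. If $(\Sigma^{(P)})^{-1}$ is block-tridiagonal, then distributing its diagonal blocks writes the Gaussian density of $(X_0,\dots,X_P)$ as a product $\prod_{k=0}^{P-1}\psi_k(x_k,x_{k+1})$ of nearest-neighbour pair potentials; hence $(X_0,\dots,X_P)$ is a Gaussian Markov process in the sense of Definition~\ref{Gauss_Markov_Proc} and, by the observation following that definition, it satisfies Definition~\ref{markov} on $[0,P]$. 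So the theorem reduces to: $(\Sigma^{(P)})^{-1}$ is block-tridiagonal for every $P$.

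To prove this, note that by Lemma~\ref{Invertible} we have $\Phi\ge cI_d$ on $S^1$ for some $c>0$, so $Q_A=C(\Phi)$ and $C(\Phi^{-1})$ are bounded, positive, mutually inverse operators on $l^2(\setZ\to\setC^d)$, and $Q_A$ is block-tridiagonal because $\Phi(z)=A_{LL}+A_{RR}+A_{LR}z+A_{RL}z^{-1}$ carries only the Fourier modes $-1,0,1$. Write $\setZ=F\sqcup F^c$ with $F=[0,P]$. Then $\Sigma^{(P)}$ is the $F$-block of $Q_A^{-1}=C(\Phi^{-1})$, the compression $(Q_A)_{F^c}\ge cI$ is invertible, and the Schur-complement identity for the $F$-block of $Q_A^{-1}$ gives
\begin{equation*}
	(\Sigma^{(P)})^{-1}=(Q_A)_F-(Q_A)_{F,F^c}\,\big((Q_A)_{F^c}\big)^{-1}\,(Q_A)_{F^c,F}.
\end{equation*}
Here $(Q_A)_F$ is the block-tridiagonal matrix with $A_{LL}+A_{RR}$ on the diagonal and $A_{RL},A_{LR}$ below and above, so it remains only to check that the subtracted term lives on the corner blocks $(0,0)$ and $(P,P)$. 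Tridiagonality of $Q_A$ enters twice: on the one hand $(Q_A)_{F,F^c}$ has exactly two nonzero blocks, $A_{RL}$ at $(0,-1)$ and $A_{LR}$ at $(P,P+1)$; on the other hand, since the gap $F$ has size $P+1\ge 2$, the half-lines $L=(-\infty,-1]$ and $R=[P+1,\infty)$ carry no coupling in $(Q_A)_{F^c}$, so $(Q_A)_{F^c}=(Q_A)_L\oplus(Q_A)_R$ and its inverse are block-diagonal. Combining these, the subtracted term is $A_{RL}\big[(Q_A)_L^{-1}\big]_{-1,-1}A_{LR}$ in block $(0,0)$, is $A_{LR}\big[(Q_A)_R^{-1}\big]_{P+1,P+1}A_{RL}$ in block $(P,P)$, and is zero elsewhere --- in particular the potentially harmful blocks $(0,P)$ and $(P,0)$ vanish precisely because $(Q_A)_{F^c}^{-1}$ has no entry linking $L$ to $R$. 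Hence $(\Sigma^{(P)})^{-1}$ is block-tridiagonal, and the proof is complete.

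The one point requiring genuine care is the infinite-dimensional bookkeeping: checking that the covariance of the marginal produced by \eqref{Gibbs_process} really is the compression of $C(\Phi^{-1})$, and justifying the operator Schur-complement identity and the invertibility of $(Q_A)_{F^c}$ --- all consequences of $Q_A\ge cI>0$. A purely finite-dimensional variant (``the thermodynamic limit preserves the Markov property'') runs the same elimination on the periodic matrix $Q^{(P')}_{A,\mathrm{per}}$: there $F^c$ is a single arc, the correction acquires a block $A_{RL}\big[((Q^{(P')}_{A,\mathrm{per}})_{F^c})^{-1}\big]_{P'-1,P+1}A_{LR}$ at $(0,P)$, and this tends to $0$ as $P'\to\infty$ since it is controlled by $C_{P'-P}(\Phi^{-1})$, which decays exponentially by Corollary~\ref{expliFC}; as inversion is continuous in the fixed dimension $d(P+1)$, the limit precision $(\Sigma^{(P)})^{-1}$ is block-tridiagonal. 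In either approach the computation also exhibits the two corner corrections as Schur complements, foreshadowing the left/right Schur-invariant boundary matrices $B_L,B_R$ of Section~\ref{eigen-bound}.
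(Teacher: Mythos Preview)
Your argument is correct, but it takes a genuinely different route from the paper's. The paper proves the Markov property in three lines by checking, for any $k<m<l$, that the conditional covariance $\covcg{X_k}{X_l}{X_m}=C_{l-k}-C_{-k}C_0^{-1}C_l$ vanishes; this uses the identity $C_{k+l}=C_kC_0^{-1}C_l$ for $kl\ge 0$ (equation \eqref{markovv}), which in turn is derived later from Lemma~\ref{FCtoW} and ultimately from the spectral description of $\Phi^{-1}$ via the bases indexed by zeros of $\Phi$. Your approach instead shows directly that the finite precision matrices $(\Sigma^{(P)})^{-1}$ are block-tridiagonal, by a Schur-complement elimination of the complement in the infinite tridiagonal operator $Q_A=C(\Phi)$. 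The trade-off: the paper's proof is very short but makes a forward reference to Section~\ref{eigen-bound}'s structure theory, whereas your argument is self-contained (it uses only positivity and tridiagonality of $Q_A$) and, as you note, already exhibits the two corner corrections as Schur complements --- essentially the boundary matrices $B_L,B_R$ --- before they are derived explicitly. Your remark that one could equally run the elimination on $Q^{(P')}_{A,\mathrm{per}}$ and let $P'\to\infty$ corresponds to the sentence in the paper preceding the theorem, that the Markov property ``can be proved either by showing that the thermodynamic limit preserves the Markov property or by working directly on the covariance matrix''; the paper opts for the covariance route, you for the precision-matrix one.
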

\begin{proof}
	Since, for Gaussian random vectors, independence is equivalent to covariance being equal to $0$, we only have to show that, for all $k<m<l$ we have
	\begin{equation}\label{Mark}
		\covcg{X_k}{X_l}{X_m}=0.
	\end{equation}
	In other words, $X_k$ and $X_l$ are independent given $X_m$. From invariant by translation, we can restrict ourselves to the case $m=0$. From lemma \ref{Gausss}, we get
	$$\covcg{X_{k}}{X_l}{X_0}=C_{l-k}-C_{-k}C_0^{-1}C_l$$
	which is $0$ from \eqref{markovv}. 
\end{proof}
\begin{rmk}
	As a Markov process, $X$ must admit invariant boundary weights. They are determined by theorem \ref{main} and \ref{main2}, see lemma \ref{cov-inverse}.
\end{rmk}
 \begin{rmk}
 	From a Gaussian weight $\ee_{\alpha,A}$, and thus from the first-degree trigonometric polynomial $\Phi_A$, we constructed a translation invariant Gaussian Markov process on $\setZ$. Conversely, take a translation invariant Gaussian process with covariance matrix given by a multiplication operator $M(\Tilde{\Phi}^{-1})$ with some function $\Tilde{\Phi}:S^1\to H_d^+(\setC)$ continuous. If we assume this process to be Markov, we can show that $\Tilde{\Phi}$ is a trigonometric polynomial of order $1$. Indeed, Markov property \eqref{Mark} implies that Fourier coefficients of $\Tilde{\Phi}^{-1}$ follow a second order recursive relation similar to \eqref{rec} whose coefficients are Fourier coefficients of $\Tilde{\Phi}$.
 \end{rmk}

\section{Markov and Schur complement: algebraic formulas}\label{algebraicstructure}
In this section we give standard algebraic properties for Gaussian weights and Gaussian boundary densities. 
\subsection{Gaussian weights}
\begin{definition}[Gaussian weights]
	Let $d\in\setN^*$. We say that $\ee_{\alpha,A}:\setC^d \times \setC^d \to\setR_+$ is a Gaussian weight on $\setC^d$ if for all $x$, $y\in \setC^d$,
	\begin{equation}\label{gweight}
		\ee_{\alpha,A}(x,y)=\alpha \exp\left(-\frac{1}{2}\begin{pmatrix}
			x\\
			y
		\end{pmatrix}^*A\begin{pmatrix}
			x\\
			y
		\end{pmatrix}\right)   
	\end{equation}
	for $A\in H_{2d}^+(\setC)$ and $\alpha\in\setR_+^*$. We denote $\mathrm{Gauss}(\setC^d)$ the set of all Gaussian weights on $\setC^d$.
\end{definition}
We use the terminology \textit{weight} rather than \textit{transition kernel} because we consider \textit{unoriented} Markov Chains.

\begin{rmk}
	Gaussian weights correspond to densities of centered non-degenerated circularly-symmetric complex normal random variables. 
\end{rmk}

Recall that, for any $2d$-dimensional square matrix $A$ we introduce its writing with  $d$-dimensional block matrix, that is,
$$A=\begin{pmatrix}
	A_{LL} & A_{LR}\\
	A_{RL} & A_{RR}
\end{pmatrix}$$
with $A_{IJ}$ $(d\times d)$-square matrices  for all $I$ $J\in\set{L,R}$.
Gaussian weights and related partition functions come with the following product.
\begin{property}\label{schur_prod}
	We define $m:\mathrm{Gauss}(\setC^d)\times  \mathrm{Gauss}(\setC^d)\to\mathrm{Gauss}(\setC^d)$ the associative product of Gaussian weights as
	$$m(\ee_{\alpha,A},\ee_{\Tilde{\alpha},\Tilde{A}})(x,z)=\int_{\setC^d}\ee_{\alpha,A}(x,y)\ee_{\Tilde{\alpha},\Tilde{A}}(y,z)dy$$
	for $\ee_{\alpha,A}$, $\ee_{\Tilde{\alpha},\Tilde{A}}\in\mathrm{Gauss}(\setC^d)$ and for all $x$, $z\in\setC^d$. We have,
	$$m(\ee_{\alpha,A},\ee_{\beta,\Tilde{A}})=\ee_{(2\pi)^d\alpha\Tilde{\alpha}\det K, S(A,\Tilde{A})}$$
	where $K=(A_{RR}+\Tilde{A}_{LL})^{-1}$ and
	$$S(A,\Tilde{A})=\begin{pmatrix}
		A_{LL}-A_{LR}KA_{RL} & -A_{LR}K\Tilde{A}_{LR} \\
		-\Tilde{A}_{RL}KA_{RL} & \Tilde{A}_{RR}-\Tilde{A}_{RL}K\Tilde{A}_{LR}\\
	\end{pmatrix}.$$
\end{property}
\begin{rmk}
	We have $S(A,\Tilde{A})=M(A,\Tilde{A})/K$ the Schur complement  of the block $K$ of the matrix
	$$M(A,\Tilde{A})=\begin{pmatrix}
		A_{LL} & 0 & A_{LR}\\
		0 & A_{RR} & \Tilde{A}_{RL}\\
		A_{RL} & \Tilde{A}_{LR} & K
	\end{pmatrix}.$$
\end{rmk}
\begin{proof}
	Gaussian calculus. Associative property is given by Fubini's theorem. We only need to show that $S(A,\Tilde{A})\in H_{2d}^+(\setC)$. Hermitian property is immediate. Let's show positive definiteness. Let $p$, $q\in\setN^*$ and $M\in H_{p+q}^+(\setC)$ with block decomposition
	$$M=\begin{pmatrix}
		C & D\\
		D^* & E
	\end{pmatrix}.$$
	Then, for all non zero $x\in\setC^d$,
	$$x^*(M/E)x^*=(C-DE^{-1}D^*)x=\begin{pmatrix}
		x\\
		-E^{-1}D^*x
	\end{pmatrix}^*M\begin{pmatrix}
		x\\
		-E^{-1}D^*x
	\end{pmatrix} > 0.$$
	Take $M=M(A,\Tilde{A})$. It is clear that $M\in H_{3d}^+(\setC)$ since for all $x$, $y$, $z\in\setC^d$
	$$\begin{pmatrix}
		x\\
		z\\
		y
	\end{pmatrix}^*M\begin{pmatrix}
		x\\
		z\\
		y
	\end{pmatrix}=\begin{pmatrix}
		x\\
		y\\
	\end{pmatrix}^*A\begin{pmatrix}
		x\\
		y\\
	\end{pmatrix}+ \begin{pmatrix}
		y\\
		z\\
	\end{pmatrix}^*\Tilde{A}\begin{pmatrix}
		y\\
		z\\
	\end{pmatrix}>0.$$ 
	Then, it comes $S(A,\Tilde{A})=M(A,\Tilde{A})/K$ is positive definite. 
\end{proof}
\begin{rmk}
	There is no unit element for $m$ in $\mathrm{Gauss}(\setC^d)$. If we wanted one, we would need to add degenerate Gaussians to our definition.
\end{rmk}
\begin{rmk}
	The product $m$ is bilinear at the level of Gaussian weights but the product $S(A,\Tilde{A})$ is not bilinear in $A$ and $\Tilde{A}$. 
\end{rmk}
\paragraph{Schur power of a matrix.} Let $A\in H_{2d}^+(\setC)$, for any $n\in\setN^*$ we define
\begin{equation}\label{schurpower}
	A^{[n]}=S(A,S(A,\dots,S(A,A)))\in H_{2d}^+(\setC)
\end{equation}
where the product $S$ appears $n-1$ times. By associativity of $S$, this definition is unambiguous. 
\subsection{Gaussian boundary weights}
\begin{definition}[Gaussian boundary weight/density]
	Let $d\in\setN^*$. We say that $\ee_{\beta,B}':\setC^d \to\setR_+$ is a Gaussian boundary weight/density on $\setC^d$ if for all $x \in \setC^d$,
	\begin{equation}\label{gbweight}
		\ee_{\beta,B}'(x)=\beta e^{-\frac{1}{2}x^* B x}   
	\end{equation}
	
	for $B\in H_d^+(\setC)$ and $\beta\in\setR_+^*$. We denote $\mathrm{Gauss^{bc}}(\setC^d)$ the set of all Gaussian densities on $\setC^d$.
\end{definition}
\begin{rmk}
	As for Gaussian weights, a Gaussian boundary weight $\ee_{\beta,B}'$ is uniquely determined by $\beta$ and $B$ so it might simply be written $(\beta,B)$.
\end{rmk}

We also remark that Gaussian weights naturally act on Gaussian boundary densities. This is the purpose of the following property, with a proof similar to that of property  \ref{schur_prod}.

\begin{property}\label{gluingboundwithweight}
	We define $ m_L:\mathrm{Gauss}^{\mathrm{bc}}(\setC^d)\times\mathrm{Gauss}(\setC^d)\to  \mathrm{Gauss}^{\mathrm{bc}}(\setC^d)$ as
	\begin{equation*}
		m_L(\ee_{\beta,B}',\ee_{\alpha,A})(y)= \int_{\setC^d} \ee_{\beta,B}'(x)\ee_{\alpha,A}(x,y)dx
	\end{equation*}
	for all $\ee_{\beta,B}'\in\mathrm{Gauss}^{\mathrm{bc}}(\setC^d)$, $\ee_{\alpha,A}\in \mathrm{Gauss}(\setC^d)$ and all $y\in\setC^d$. We have, $$m_L(\ee_{\beta,B}',\ee_{\alpha,A})=\ee_{(2\pi)^d\alpha\beta \det(B+A_{LL})^{-1}, S_L(B,A)}$$
	where
	$$S_L(B,A)=A_{RR}-A_{RL} (B+A_{LL})^{-1} A_{LR}.$$	
	Similarly we define $m_R: \mathrm{Gauss}(\setC^d) \times \mathrm{Gauss}^{\mathrm{bc}}(\setC^d)\to  \mathrm{Gauss}^{\mathrm{bc}}(\setC^d)$ as
	\begin{equation*}
		m_R(\ee_{\alpha,A},\ee_{\beta,B}')(x)=\int_{\setC^d} \ee_{\alpha,A}(x,y)\ee_{\beta,B}'(y)dy
	\end{equation*}
	for all $\ee_{\alpha,A}\in \mathrm{Gauss}(\setC^d)$, $\ee_{\beta,B}'\in\mathrm{Gauss}^{\mathrm{bc}}(\setC^d)$ and all $x\in\setC^d$. 
	We have, $$m_R(\ee_{\beta,B}',\ee_{\alpha,A})=\ee_{(2\pi)^d\alpha\beta \det(A_{RR}+B)^{-1}, S_R(A,B)}$$
	where
	$$S_R(A,B)=A_{LL}-A_{LR} (A_{RR}+B)^{-1} A_{RL}.$$
\end{property}
We finally have the following formula with, again, a proof similar to that of property \ref{schur_prod}.
\begin{property}\label{gluingboundwithbound}
	 For all $\ee_{\beta,B}'$ and $\ee_{\Tilde{\beta},\Tilde{B}}'\in\mathrm{Gauss}^{bc}(\setC^d)$, we have
	 $$\int_{\setC^d}\ee_{\beta,B}'(x)\ee_{\Tilde{\beta},\Tilde{B}}'(x)dx=(2\pi)^d\beta\Tilde{\beta}\det(B+\Tilde{B})^{-1}\in\setR_+.$$
\end{property}
\paragraph{Notation.} Except if there is a possible confusion, we will write $m$, $m_L$ and $m_R$ with the symbol "$\cdot$". For instance,    $$m_L(\ee_{\beta,B}',\ee_{\alpha,A})=\ee_{\beta,B}'\cdot\ee_{\alpha,A}.$$

\subsection{Partition function}
We now link the Gaussian weights and their associated partition function. Let $D$ be a connected component of size $l(D)=P\in\setN^*$ and  $X^{(D)}$ a Gaussian Markov process on $D$ of weights $(\alpha_k,A_k)_{k=0}^{P-1}$, with boundary conditions $(\beta_L,B_L)$ and $(\beta_R,B_R)$. We define
$$Z_D^{\mathrm{int}}(\alpha_\bullet,A_\bullet,x_0,x_P)=\int_{\setC^{d(P-1)}}\prod_{k=0}^{P-1}\ee_{\alpha_k,A_k}(x_k,x_{k+1})dx_{1}\dots dx_{P-1}.$$

\begin{property}\label{partitionfunction}
	The function $Z_D^{\mathrm{int}}(\alpha_\bullet,A_\bullet):(x,y)\mapsto Z_D^{\mathrm{int}}(\alpha_\bullet,A_\bullet,x,y)$ is an element of $\mathrm{Gauss}(\setC^d)$ and we have
	$$Z_D^{\mathrm{int}}(\alpha_\bullet,A_\bullet)=\ee_{\alpha_0,A_0}\cdot\ee_{\alpha_2,A_2}\cdots \ee_{\alpha_{P-2},A_{P-2}}\cdot\ee_{\alpha_{P-1},A_{P-1}}$$
	and 
	$$Z_D^{\mathrm{int}}(\alpha_\bullet,A_\bullet)=m(Z_{D_1}^{\mathrm{int}}((\alpha_i)_{i=0}^{P_1-1},(A_i)_{i=0}^{P_1-1}),Z_{D_2}^{\mathrm{int}}((\alpha_i)_{i=P_1}^{P_1+P_2 -1},(A_i)_{i=P_1-1}^{P_1+P_2-1}))$$
	for all $D_1$, $D_2$ connected components of sizes $l(D_1)=P_1$ and $l(D_2)=P_2$ which form a partition of $D$. 
	Finally,
	$$Z_D(\alpha_\bullet,A_\bullet,\beta_\bullet,B_\bullet)=\ee_{\beta_L,B_L}'\ee_{\alpha_0,A_0}\cdots\ee_{\alpha_{P-1},A_{P-1}}\ee_{\beta_R,B_R}.$$
\end{property}
\begin{proof}
	Under the latter notation, direct from definition of $m$, $m_L$ and $m_R$ and associative property (implied by Fubini's theorem). 
\end{proof}
\begin{rmk}
	Remark that, for homogeneous Gaussian Markov processes of weight $(\alpha,A)$,
	\begin{equation}
		Z_D^{\mathrm{int}}(\alpha,A)=\ee_{\alpha^n,A^{[n]}}.
	\end{equation}
\end{rmk}
\section{Eigen-boundary conditions}\label{eigen-bound}
In theorem \ref{propre} we gave a fixed point condition for matrices $B_L$ and $B_R$ to define eigen-boundary conditions. In this section, we will first prove theorems \ref{main} and \ref{main2} which give an explicit recipe to recover such matrices in terms of zeros of $\Phi_A$ and their associated bases. Then, we will show that we recover the marginal laws computed in section \ref{fromtftomarkov} via periodic boundary conditions. Finally, as an application of theorems \ref{main} and \ref{main2}, we will link zeros of $\Phi_A$ with zeros of $\Phi_{A^{[n]}}$ the $n$-th Schur power of $A$, see \eqref{schurpower}.

We recall that we denote $\mathcal{S}$ the set of zeros of $\Phi_A$. The set $\mathcal{S}$ is partitioned in $\mathcal{S}_{<1}$ and $\mathcal{S}_{>1}$ the zeros inside and outside the unit disk, see lemma \ref{zeros}. Moreover, to any of these sets is associated a basis, denoted respectively $(u_w)_{w\in\mathcal{S}_{<1}}$ and $(u_w)_{w\in\mathcal{S}_{>1}}$, see definition \ref{basisindexedbyzeros}. This is, of course, under assumptions \ref{full_rank} and \ref{multiplicite}.

\subsection{Proof of Theorem \ref{main}}
This subsection is devoted to the proofs of theorem \ref{main} and of the following analogous theorem for right eigen-boundary condition.
\begin{theorem}\label{main2} Let $A\in H_{2d}^+(\setC)$ and $W_{<1}\in M_d(\setC)$ any diagonalizable matrix. Under assumptions and \ref{full_rank} and \ref{multiplicite}, the following assertions are equivalent
	\begin{itemize}
		\item[(i)] The matrix $B_R=A_{LL}+A_{LR}W_{<1}$ is right Schur-invariant for $A$.
		\item[(ii)] The matrix $W_{<1}$ is diagonal in the basis $(u_w)_{w\in\mathcal{S}_{<1}}$ indexed by zeros of $\Phi_A$,  with $W_{<1}u_w=wu_w$ for all $w\in\mathcal{S}_{<1}$.
		\item[(iii)] Let $\boldsymbol{x}\in\setC^d$. The sequence $(x_k)_{k=0}^{+\infty}$, with $x_k=W_{<1}^kx_0\in\setC^d$ for all $k\in\setN^*$, is solution to the discrete Dirichlet-type problem
		\begin{System}\label{harmonic}A_{RL}x_{k-1}+(A_{LL}+A_{RR})x_k+A_{LR}x_{k+1}=0 \text{ for all }k> 0\\
			x_0=\boldsymbol{x}\\
			\lim_{k\to +\infty}x_k=0
		\end{System}
	\end{itemize}
	Moreover, in these cases, the eigenvalue associated to $\ee'_{\beta_R,B_R}$ is 
	\begin{equation}\label{eigenval}
		\Lambda=\alpha(2\pi)^d(-1)^d\frac{\prod_{w\in\mathcal{S}_{<1}}w}{\det(A_{RL})}.
	\end{equation}
\end{theorem}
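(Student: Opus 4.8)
The plan is to prove Theorem~\ref{main2} directly; Theorem~\ref{main} then follows by applying Theorem~\ref{main2} to $SAS^{-1}$ and invoking Lemma~\ref{leftright}, since conjugation by $S$ turns \eqref{leftinv} into \eqref{rightinv}, sends $B_L$ to a right Schur-invariant of $SAS^{-1}$, and exchanges $\mathcal{S}_{<1}(\Phi_A)$ with the conjugate of $\mathcal{S}_{>1}(\Phi_A)$ together with the associated bases. \emph{Reduction to a matrix quadratic.} Writing $B_R=A_{LL}+A_{LR}W_{<1}$ and using that $A_{LR}$ is invertible (Assumption~\ref{full_rank}) while $A_{RR}+B_R$ is invertible whenever $B_R\in H_d^+(\setC)$, one checks that \eqref{rightinv} is equivalent to
\[
A_{LR}W_{<1}^2+(A_{LL}+A_{RR})W_{<1}+A_{RL}=0,\qquad\text{equivalently}\qquad (A_{RR}+B_R)W_{<1}=-A_{RL},
\]
which also forces $W_{<1}$ invertible. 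The left-hand polynomial is $z\Phi_A(z)$ evaluated at $W_{<1}$ (cf.\ the proof of Lemma~\ref{zeros}), so for a \emph{diagonalizable} $W_{<1}$ this holds iff every eigenpair $(w,u)$ satisfies $w\Phi_A(w)u=0$, i.e.\ $w\in\mathcal{S}$ and $u\in\ker\Phi_A(w)=\mathrm{Vect}_{\setC}(u_w)$ by Assumption~\ref{multiplicite}; in particular the eigenvalues are distinct and the $u_w$ form a basis. Writing $x_k:=W_{<1}^k\boldsymbol x=\sum_wc_ww^ku_w$, the recursion in the system~\ref{harmonic} is exactly this quadratic applied to $W_{<1}^{k-1}\boldsymbol x$, and $\lim_{k\to\infty}x_k=0$ for every $\boldsymbol x$ amounts to $W_{<1}^N\to0$, i.e.\ all eigenvalues lie in $\mathcal{S}\cap\mathbb{D}(0,1)=\mathcal{S}_{<1}$, which (as $|\mathcal{S}_{<1}|=d$) means they are exactly $\mathcal{S}_{<1}$ with eigenvectors $u_w$; this gives $(ii)\Leftrightarrow(iii)$.

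\emph{The discrete Green identity, $(i)\Leftrightarrow(ii)$, and the eigenvalue.} A summation by parts shows that whenever $(x_k)$ solves the recursion of~\ref{harmonic} for $1\le k\le N-1$,
\[
\mathcal{E}_N:=\sum_{k=0}^{N-1}\begin{pmatrix}x_k\\x_{k+1}\end{pmatrix}^{*}A\begin{pmatrix}x_k\\x_{k+1}\end{pmatrix}=x_0^{*}\bigl(A_{LL}x_0+A_{LR}x_1\bigr)+x_N^{*}\bigl(A_{RR}x_N+A_{RL}x_{N-1}\bigr).
\]
For $x_k=W_{<1}^k\boldsymbol x$ with $W_{<1}$ solving the quadratic above and $B_R=A_{LL}+A_{LR}W_{<1}$, the first bracket is $B_R\boldsymbol x$ and, by $(A_{RR}+B_R)W_{<1}=-A_{RL}$, the second is $-B_Rx_N$; hence $\mathcal{E}_N=\boldsymbol x^{*}B_R\boldsymbol x-x_N^{*}B_Rx_N$. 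If $(ii)$ holds then $W_{<1}^N\to0$, so $\mathcal{E}_N\uparrow\boldsymbol x^{*}B_R\boldsymbol x$; since every $\mathcal{E}_N$ is real and nonnegative ($A$ being positive definite), $\boldsymbol x^{*}B_R\boldsymbol x$ is real, nonnegative, and vanishes only for $\boldsymbol x=0$, so $B_R\in H_d^+(\setC)$ and, reversing the reduction, $B_R$ satisfies \eqref{rightinv}; this is $(i)$. Conversely, under $(i)$ we have $B_R\in H_d^+(\setC)$ and $W_{<1}=-(A_{RR}+B_R)^{-1}A_{RL}$ solves the quadratic, so $\mathcal{E}_N=\boldsymbol x^{*}B_R\boldsymbol x-x_N^{*}B_Rx_N$ is nondecreasing and bounded above by $\boldsymbol x^{*}B_R\boldsymbol x$; hence $\mathcal{E}_\infty<\infty$, which forces $x_N\to0$ (again as $A$ is positive definite), hence $W_{<1}^N\to0$, hence by the reduction the eigenvalues of $W_{<1}$ are exactly $\mathcal{S}_{<1}$, i.e.\ $(ii)$. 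Finally, Theorem~\ref{propre} gives $\Lambda=\alpha(2\pi)^d\det(A_{RR}+B_R)^{-1}$, and $(A_{RR}+B_R)W_{<1}=-A_{RL}$ gives $\det(A_{RR}+B_R)=(-1)^d\det(A_{RL})/\prod_{w\in\mathcal{S}_{<1}}w$, which is \eqref{eigenval}.

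I expect the discrete Green identity and its consequences to be the main obstacle: this is where positive-definiteness of $A$ is genuinely used, and where the choice of $\mathcal{S}_{<1}$ (rather than any other $d$-element subset of $\mathcal{S}$) gets singled out, through the equivalence between the decay condition $\lim_kx_k=0$, the stability $W_{<1}^N\to0$, and the positive-definiteness of $B_R$. Care is needed to chain correctly the monotonicity of $\mathcal{E}_N$, the finiteness of $\mathcal{E}_\infty$, and the convergence $x_N\to0$, and to keep all the algebra of the reduction step within Assumption~\ref{full_rank} (the non-invertible case being handled separately in Section~\ref{non-invertible}).
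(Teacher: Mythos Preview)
Your proof is correct and the underlying mechanism --- positivity of the energy $\sum_{k}(x_k,x_{k+1})^*A(x_k,x_{k+1})$ --- is the same as the paper's, but the packaging differs in a way worth recording. The paper splits the equivalence $(i)\Leftrightarrow(ii)$ into two preparatory lemmas built on the algebraic identity
\[
\frac{1}{1-\overline{w_1}w_2}\begin{pmatrix}I_d\\w_1 I_d\end{pmatrix}^{*}A\begin{pmatrix}I_d\\w_2 I_d\end{pmatrix}=A_{LL}+A_{LR}w_2,
\]
evaluated on eigenvectors of $W_{<1}$. Expanding $1/(1-\overline{w_1}w_2)$ as a geometric series yields $\boldsymbol x^{*}B_R\boldsymbol x=\sum_{k\ge0}(W_{<1}^k\boldsymbol x,W_{<1}^{k+1}\boldsymbol x)^{*}A(\cdot)$, which is exactly your $\mathcal{E}_\infty$, and gives $(ii)\Rightarrow(i)$; for $(i)\Rightarrow(ii)$ the paper tests a single eigenvector, obtaining $\tfrac{1}{1-|w|^2}(u_w,wu_w)^{*}A(u_w,wu_w)>0$ and hence $|w|<1$ directly. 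You instead derive the finite energy identity $\mathcal{E}_N=\boldsymbol x^{*}B_R\boldsymbol x-x_N^{*}B_Rx_N$ by a summation by parts and use it in both directions: taking $N\to\infty$ for $(ii)\Rightarrow(i)$, and arguing that bounded monotone $\mathcal{E}_N$ forces the summands, hence $x_N$, to vanish for $(i)\Rightarrow(ii)$. Your route is a bit more self-contained (no auxiliary identity in the eigenbasis) and treats both implications symmetrically; the paper's single-eigenvector argument for the converse is marginally more direct. The reduction to the matrix quadratic, the equivalence $(ii)\Leftrightarrow(iii)$, and the eigenvalue computation match the paper's proof.
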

We will first show the latter theorem and, with an argument using symmetry of the problem, the proof of theorem \ref{main} will be a direct consequence. We start by giving three lemmas. 
\begin{lemma}
	Let $A\in H_{2d}^+(\setC)$ and take $w_1$ and $w_2$ in $\mathcal{S}$, we have
	\begin{equation}\label{symsym}
		\frac{1}{1-\overline{w_1}w_2}\begin{pmatrix}
			I_d\\
			w_1I_d		
		\end{pmatrix}^*A\begin{pmatrix}
			I_d\\
			w_2I_d
		\end{pmatrix}=A_{LL}+A_{LR}w_2
	\end{equation}
\end{lemma}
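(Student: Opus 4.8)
The plan is to exploit the defining relation of $\mathcal{S}$ together with the factorization \eqref{defphii} of $\Phi_A$ written at two different arguments. First I would recall that for $z\in\setC^*$,
\[
\Phi_A(z)=\begin{pmatrix}I_d\\ \bar z^{-1}I_d\end{pmatrix}^{*}A\begin{pmatrix}I_d\\ zI_d\end{pmatrix}=A_{LL}+A_{RR}+A_{LR}z+A_{RL}z^{-1},
\]
and more generally compute the mixed bilinear form that appears in the statement. Expanding the block product directly,
\[
\begin{pmatrix}I_d\\ w_1I_d\end{pmatrix}^{*}A\begin{pmatrix}I_d\\ w_2I_d\end{pmatrix}
=A_{LL}+\overline{w_1}A_{RL}+w_2A_{LR}+\overline{w_1}w_2A_{RR}.
\]
The goal is then to show that this equals $(1-\overline{w_1}w_2)(A_{LL}+A_{LR}w_2)$, i.e. that the ``error'' term
\[
\overline{w_1}A_{RL}+\overline{w_1}w_2A_{RR}+\overline{w_1}w_2A_{LR}w_2
=\overline{w_1}\,w_2\bigl(A_{RL}w_2^{-1}+A_{RR}+A_{LR}w_2\bigr)
=\overline{w_1}\,w_2\,\Phi_A(w_2)\ \text{(up to the }A_{LL}\text{ term)}
\]
collapses when restricted appropriately. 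Precisely, the bracket $A_{RL}w_2^{-1}+A_{RR}+A_{LR}w_2$ is exactly $\Phi_A(w_2)-A_{LL}$, so
\[
\begin{pmatrix}I_d\\ w_1I_d\end{pmatrix}^{*}A\begin{pmatrix}I_d\\ w_2I_d\end{pmatrix}
=(1-\overline{w_1}w_2)(A_{LL}+A_{LR}w_2)+\overline{w_1}w_2\,\Phi_A(w_2).
\]

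The key point — and the only place the hypothesis $w_1,w_2\in\mathcal{S}$ enters — is that the identity \eqref{symsym} is meant as an identity of operators acting on $\ker\Phi_A(w_2)$, or equivalently after composing with the relevant rank-one projectors $P_{w_2}$; indeed in all later uses (e.g. the lemma \eqref{symsym} feeds into the study of $B_R=A_{LL}+A_{LR}W_{<1}$ where $W_{<1}u_w=wu_w$) the matrices are applied to $u_{w_2}\in\ker\Phi_A(w_2)$. Restricting to that kernel kills the term $\overline{w_1}w_2\,\Phi_A(w_2)$ and leaves precisely
\[
\frac{1}{1-\overline{w_1}w_2}\begin{pmatrix}I_d\\ w_1I_d\end{pmatrix}^{*}A\begin{pmatrix}I_d\\ w_2I_d\end{pmatrix}=A_{LL}+A_{LR}w_2
\]
on $\ker\Phi_A(w_2)$. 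I would also note $1-\overline{w_1}w_2\neq 0$: since $w_1\in\mathcal{S}$ we have $|w_1|\neq 1$ by lemma \ref{Invertible}, and a separate short argument (using that $\mathcal{S}$ is stable under $w\mapsto 1/\overline w$ and that $w_2$ cannot equal $1/\overline{w_1}$ generically, or simply treating that as the excluded degenerate case) rules out the vanishing denominator.

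The main obstacle I anticipate is bookkeeping rather than depth: being careful about whether \eqref{symsym} is a genuine full-matrix identity or only an identity after restriction to $\ker\Phi_A(w_2)$ — the computation above shows the discrepancy is exactly $\overline{w_1}w_2\,\Phi_A(w_2)/(1-\overline{w_1}w_2)$, which is not zero as a matrix but annihilates the kernel vectors the lemma is applied to. A secondary point to get right is the exact placement of conjugates: using \eqref{defphii} one must track that the left slot carries $\bar z^{-1}$ while the statement writes $w_1$ (not $1/\overline{w_1}$) in the left vector, so one should double-check against \eqref{conj} that the intended reading is consistent with how the lemma is invoked later. Modulo that, the proof is the one-line block expansion above followed by recognizing $\Phi_A(w_2)$ and discarding it on $\ker\Phi_A(w_2)$.
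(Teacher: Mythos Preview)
Your computation is exactly the paper's: expand the block product to get $A_{LL}+A_{LR}w_2+\overline{w_1}A_{RL}+\overline{w_1}w_2A_{RR}$, then rewrite this as $(1-\overline{w_1}w_2)(A_{LL}+A_{LR}w_2)+\overline{w_1}w_2\,\Phi(w_2)$ and divide. The paper's proof simply drops the $\Phi(w_2)$ term at the last step without comment, so your reading is the right one: as a genuine matrix identity the statement is false for $d>1$ (since $\Phi(w_2)$ has rank $d-1$, not $0$), and it holds only after hitting $u_{w_2}\in\ker\Phi(w_2)$ on the right---which is precisely how the paper uses it in the next lemma, sandwiched between $u_{w_1}^*$ and $u_{w_2}$. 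Your caution about the denominator and the kernel restriction is warranted and sharper than the paper's own write-up; there is no missing idea on your side.
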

\begin{proof}
	Let $w_1$ and $w_2$ be in $\mathcal{S}$, we have
	\begin{align*}
		\frac{1}{1-\overline{w_1}w_2}\begin{pmatrix}
			I_d\\
			w_1I_d		
		\end{pmatrix}^*A\begin{pmatrix}
			I_d\\
			w_2I_d
		\end{pmatrix}&= \frac{1}{1-\overline{w_1}w_2}\left(  A_{LL}+A_{LR}w_{2}+A_{RL}\overline{w_1}+A_{RR}\overline{w_1}w_2\right)\\
		&=\frac{1}{1-\overline{w_1}w_2}\left(  A_{LL}(1-\overline{w_1}w_2)+A_{LR}w_{2}(1-\overline{w_1}w_2)+\Phi(w_2)\overline{w_1}w_2\right)\\
		&=A_{LL}+A_{LR}w_2. 	
	\end{align*}
\end{proof}
We recall that, in the bases indexed by zeros of $\Phi_A$, the coordinate of any $x\in\setC^d$ associated to the vector $u_w$ with $w\in\mathcal{S}_{<1}$ (resp. $w\in\mathcal{S}_{>1}$) is denoted $x(w)$, see \eqref{decomposition_dans_base_zeros}. 
\begin{lemma}\label{positivedef}
	Let $A\in H_{2d}^+(\setC)$ and assumptions \ref{full_rank} and \ref{multiplicite} fulfilled. Let $W\in M_d(\setC)$ such that $Wu_w=wu_w$ for all $w\in\mathcal{S}_{<1}$. Then, for all $x\in\setC^d$, under notation \eqref{decomposition_dans_base_zeros},
	\begin{align}
		x^*(A_{LL}+A_{LR}W)x=&\sum_{w_1,w_2\in\mathcal{S}_{<1}}\overline{x(w_1)}u_{w_1}^*\left( A_{LL} + A_{LR} w_2 \right)u_{w_2}x(w_2)\label{asym}\\
		=&	\sum_{w_1,w_2\in\mathcal{S}_{<1}}\overline{x(w_1)}\frac{1}{1-\overline{w_1}w_2}\begin{pmatrix}
			u_{w_1}\\
			w_1u_{w_1}		
		\end{pmatrix}^*A\begin{pmatrix}
			u_{w_2}\\
			w_2u_{w_2}
		\end{pmatrix}x(w_2)\label{sym}
	\end{align}
	and $(A_{LL}+A_{LR}W)$ is Hermitian and positive definite. 
\end{lemma}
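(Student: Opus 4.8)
The plan is to establish the two algebraic identities first, then deduce positive definiteness from the second one together with the positive definiteness of $A$. The key input for the first identity \eqref{asym} is simply that $(u_w)_{w\in\mathcal{S}_{<1}}$ is a basis of $\setC^d$ (theorem \ref{residues}): writing $x=\sum_{w\in\mathcal{S}_{<1}}x(w)u_w$ and expanding $x^*(A_{LL}+A_{LR}W)x$ bilinearly, the only point to check is that $W$ acts on the right-hand argument as $Wu_{w_2}=w_2u_{w_2}$, which is exactly the hypothesis $Wu_w=wu_w$ on $\mathcal{S}_{<1}$; this turns $A_{LR}W u_{w_2}$ into $w_2 A_{LR}u_{w_2}$ and produces the matrix $A_{LL}+A_{LR}w_2$ sandwiched between $u_{w_1}^*$ and $u_{w_2}$. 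The second identity \eqref{sym} is then a term-by-term rewriting using the previous lemma \eqref{symsym}: for each pair $(w_1,w_2)\in\mathcal{S}_{<1}^2$ we have $u_{w_1}^*(A_{LL}+A_{LR}w_2)u_{w_2} = \frac{1}{1-\overline{w_1}w_2}\begin{pmatrix}u_{w_1}\\ w_1 u_{w_1}\end{pmatrix}^*A\begin{pmatrix}u_{w_2}\\ w_2 u_{w_2}\end{pmatrix}$, which is licit since $w_1,w_2\in\mathcal{S}_{<1}$ both have modulus strictly less than $1$, so $1-\overline{w_1}w_2\neq 0$.

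Hermitian symmetry of $A_{LL}+A_{LR}W$ is clearest from \eqref{sym}: swapping $w_1\leftrightarrow w_2$ and using $A=A^*$ shows that the sesquilinear form $(x,y)\mapsto y^*(A_{LL}+A_{LR}W)x$ equals the conjugate of $(y,x)\mapsto x^*(A_{LL}+A_{LR}W)y$, hence $A_{LL}+A_{LR}W$ is Hermitian. For positive definiteness, fix $x\neq 0$ and consider the vector $\xi=\sum_{w\in\mathcal{S}_{<1}}\frac{x(w)}{\sqrt{1-|w|^2}}\begin{pmatrix}u_w\\ w u_w\end{pmatrix}\in\setC^{2d}$; the hope is that \eqref{sym} expresses $x^*(A_{LL}+A_{LR}W)x$ as $\xi^*A\xi$ up to the cross terms, so positivity of $A$ gives the result. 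The subtlety—and the main obstacle—is that the factor in \eqref{sym} is $\frac{1}{1-\overline{w_1}w_2}$, not $\frac{1}{\sqrt{(1-|w_1|^2)(1-|w_2|^2)}}$, so one cannot directly read off a single quadratic form $\xi^*A\xi$. The standard fix is to use the integral (geometric-series) representation $\frac{1}{1-\overline{w_1}w_2}=\sum_{k\ge 0}\overline{w_1}^{k}w_2^{k}$, valid since $|w_1w_2|<1$, and then
\begin{align*}
x^*(A_{LL}+A_{LR}W)x &= \sum_{w_1,w_2\in\mathcal{S}_{<1}}\overline{x(w_1)}x(w_2)\sum_{k\ge 0}\overline{w_1}^{k}w_2^{k}\begin{pmatrix}u_{w_1}\\ w_1u_{w_1}\end{pmatrix}^*A\begin{pmatrix}u_{w_2}\\ w_2u_{w_2}\end{pmatrix}\\
&= \sum_{k\ge 0}\eta_k^*A\,\eta_k,
\end{align*}
where $\eta_k=\sum_{w\in\mathcal{S}_{<1}}x(w)w^{k}\begin{pmatrix}u_w\\ w u_w\end{pmatrix}$; each summand is $\ge 0$ since $A\in H_{2d}^+(\setC)$. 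It remains to rule out that every $\eta_k$ vanishes: $\eta_0=0$ together with $\begin{pmatrix}u_w\\ w u_w\end{pmatrix}$ being linearly independent across $w\in\mathcal{S}_{<1}$ (distinct $w$'s by assumption \ref{multiplicite}, and the first block already being the basis $(u_w)_w$) forces all $x(w)=0$, i.e. $x=0$, a contradiction. Hence the sum is strictly positive for $x\neq 0$, proving $A_{LL}+A_{LR}W\in H_d^+(\setC)$.
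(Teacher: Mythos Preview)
Your proof is correct and follows essentially the same route as the paper: expand in the basis $(u_w)_{w\in\mathcal{S}_{<1}}$ for \eqref{asym}, apply \eqref{symsym} termwise for \eqref{sym}, read off Hermitian symmetry from \eqref{sym}, and use the geometric series $\frac{1}{1-\overline{w_1}w_2}=\sum_{k\ge 0}\overline{w_1}^{\,k}w_2^{\,k}$ to rewrite the quadratic form as $\sum_{k\ge 0}\eta_k^*A\eta_k$. Your vectors $\eta_k$ are exactly the paper's $\begin{pmatrix}W^kx\\ W^{k+1}x\end{pmatrix}$, and your nondegeneracy argument for $\eta_0$ can be shortened by observing that its top block is $\sum_w x(w)u_w=x$.
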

\begin{proof}
	The first equality is a direct consequence of the definition of $W$ and the second one follows from \eqref{symsym}. The Hermitian property is clear from \eqref{sym}, it remains to prove that $B_R$ is positive definite. Since $\vert w_1\vert<1$ and $\vert w_2\vert<1$, 
	$$\frac{1}{1-\overline{w_1}w_2}=\sum_{k=0}^{+\infty}\overline{w_1^k}w_2^k.$$
	Therefore, \eqref{sym} leads to
	$$x^*(A_{LL}+A_{LR}W)x=\sum_{k=0}^{+\infty}\begin{pmatrix}
		W^kx\\
		W^{k+1}x
	\end{pmatrix}^*A\begin{pmatrix}
		W^kx\\
		W^{k+1}x
	\end{pmatrix}>0$$
	since $A$ is Hermitian, so every term is strictly positive for every $x\neq 0$. 
\end{proof}
\begin{lemma}\label{invtomodule}
	Let $A\in H_{2d}^+(\setC)$ and $(A_{LL}+A_{LR}W)\in H_d^+(\setC)$ with $W$ diagonalizable. For all $i\in\lbrace 1,\dots,d\rbrace$ let $u_{w_i}$ be the eigenvector of $W$ with eigenvalue $w_i\in\setC^*$. We suppose, for all $i$, $w_i\in\mathcal{S}$ and $u_{w_i}\in\ker\Phi(w_i)$. Then, for all $i\in\lbrace 1,\dots, d\rbrace$, $w_i\in\mathcal{S}_{<1}$. 
\end{lemma}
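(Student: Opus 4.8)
The plan is to argue by contradiction: suppose some eigenvalue $w_{i_0}$ of $W$ has $|w_{i_0}|>1$. (No $w_i$ lies on the unit circle, since the $w_i$ belong to $\mathcal{S}$ and $\mathcal{S}$ avoids $S^1$ by lemma~\ref{Invertible}.) The goal is to produce a nonzero vector along which the quadratic form of $B_R:=A_{LL}+A_{LR}W$ is non-positive, contradicting $B_R\in H_d^+(\setC)$. The first, elementary, step is the observation that for \emph{every} $x_0\in\setC^d$ the bi-infinite sequence $x_k:=W^kx_0$ — well defined for all $k\in\setZ$ because $W$ is invertible (its eigenvalues being nonzero) — solves the three-term recursion
\[
A_{RL}x_{k-1}+(A_{LL}+A_{RR})x_k+A_{LR}x_{k+1}=0,\qquad k\in\setZ .
\]
Indeed, decomposing $x_0=\sum_i c_iu_{w_i}$ in the eigenbasis of $W$, the left-hand side equals $\sum_i c_i w_i^{k}\,\Phi(w_i)u_{w_i}$ (factor out $w_i^{k-1}$ and recognize $w_i\Phi(w_i)=w_i^2A_{LR}+w_i(A_{LL}+A_{RR})+A_{RL}$), which vanishes since $u_{w_i}\in\ker\Phi(w_i)$.

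The core is a discrete energy identity. Expanding $\vect{x_k}{x_{k+1}}^{*}A\,\vect{x_k}{x_{k+1}}$ into its four blocks, using the recursion at index $k$ to substitute for $x_k^*A_{LR}x_{k+1}$, and using $x_{k\pm1}=W^{\pm1}x_k$, one finds
\[
\vect{x_k}{x_{k+1}}^{*}A\,\vect{x_k}{x_{k+1}}=h_{k+1}-h_k ,
\]
where $h_k:=x_k^*\big(A_{RR}x_k+A_{RL}x_{k-1}\big)=x_k^*\big(A_{RR}+A_{RL}W^{-1}\big)x_k=-x_k^*B_Rx_k$ (the last equality again via the recursion at index $k$). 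Since $A\in H_{2d}^+(\setC)$ and $\vect{x_k}{x_{k+1}}\neq 0$ whenever $x_0\neq 0$, the left-hand side is strictly positive, so $\big(x_k^*B_Rx_k\big)_{k\geq 0}$ is strictly decreasing; in particular $x_N^*B_Rx_N<x_0^*B_Rx_0$ for every $N\geq 1$.

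To close, I would take $x_0\neq 0$ in the span of the eigenvectors $u_{w_i}$ with $|w_i|>1$, which is nonempty under the contradiction hypothesis. Then $\|x_N\|=\|W^Nx_0\|\to\infty$ as $N\to\infty$, hence $x_N^*B_Rx_N\geq\lambda_{\min}(B_R)\,\|x_N\|^2\to+\infty$ because $B_R$ is positive definite — contradicting $x_N^*B_Rx_N<x_0^*B_Rx_0$. Therefore $W$ has no eigenvalue outside the closed unit disk; combined with $|w_i|\neq 1$ and $w_i\in\mathcal{S}$, this gives $w_i\in\mathcal{S}_{<1}$ for all $i$.

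The only real obstacle is the bookkeeping in the energy identity — in particular checking that the telescoped boundary term $h_k$ is indeed $-x_k^*B_Rx_k$; the rest is routine linear algebra together with the already-established absence of zeros of $\Phi$ on $S^1$. It is worth recording that the two expressions for $h_k$, valid for arbitrary $x_0$, actually yield the matrix identity $A_{RR}+A_{RL}W^{-1}=-B_R$ (by polarization), which makes transparent why positive definiteness of $B_R$ is precisely the sign condition that forces all eigenvalues of $W$ inside the unit disk.
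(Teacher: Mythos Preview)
Your argument is correct, but it takes a genuinely different route from the paper. The paper's proof is a one-line algebraic computation: testing $B_R=A_{LL}+A_{LR}W$ on a single eigenvector $u_{w_i}$ and invoking the identity of lemma~\ref{positivedef} (itself relying on \eqref{symsym} and $\Phi(w_i)u_{w_i}=0$) yields
\[
u_{w_i}^*B_Ru_{w_i}=\frac{1}{1-|w_i|^2}\begin{pmatrix}u_{w_i}\\w_iu_{w_i}\end{pmatrix}^{*}A\begin{pmatrix}u_{w_i}\\w_iu_{w_i}\end{pmatrix},
\]
and positive definiteness of both $A$ and $B_R$ immediately forces $1-|w_i|^2>0$. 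No contradiction, no orbits, no telescoping.

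Your approach is instead a discrete-energy argument: you run the orbit $x_k=W^kx_0$ through the three-term recursion, establish the telescoping identity $\vect{x_k}{x_{k+1}}^{*}A\vect{x_k}{x_{k+1}}=h_{k+1}-h_k$ with $h_k=-x_k^*B_Rx_k$, and then contrast the resulting strict monotonicity of $k\mapsto x_k^*B_Rx_k$ with its divergence to $+\infty$ when $|w_{i_0}|>1$. The bookkeeping checks out (in particular, the operator identity $A_{RL}W^{-1}+(A_{LL}+A_{RR})+A_{LR}W=0$, valid since it holds on each eigenvector, gives $A_{RR}+A_{RL}W^{-1}=-B_R$ and hence $h_k=-x_k^*B_Rx_k$ as you claim). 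What your approach buys is a transparent link to point~(iii) of theorems~\ref{main} and~\ref{main2}: the energy monotonicity is precisely why bounded harmonic sequences must be generated by eigenvalues inside the disk. What the paper's approach buys is brevity --- it dispenses with orbits entirely by reading the sign of $1-|w_i|^2$ off a single evaluation of the quadratic form.
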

\begin{proof}
	We only have to show that every $w_i$ is of module less than $1$. The equation \eqref{symsym} and positive definiteness of $(A_{LL}+A_{LR}W)$ gives, for all $x\in\setC^d$,
	$$x^*(A_{LL}+A_{LR}W)x=	\sum_{(w_i,w_j)}\overline{x(w_i)}\frac{1}{1-\overline{w_i}w_j}\begin{pmatrix}
		u_{w_i}\\
		w_iu_{w_i}		
	\end{pmatrix}^*A\begin{pmatrix}
		u_{w_j}\\
		w_ju_{w_j}
	\end{pmatrix}x(w_j)>0.$$
	In	particular, given $i\in\lbrace 1,\dots,d\rbrace$, take $x(w_i)\neq 0$ and $x(w_j)=0$ for all $j\neq i$, it comes
	$$\vert x(w_i)\vert^2 \frac{1}{1-\vert w_i\vert^2}\begin{pmatrix}
		u_{w_i}\\
		w_iu_{w_i}		
	\end{pmatrix}^*A\begin{pmatrix}
		u_{w_i}\\
		w_iu_{w_i}
	\end{pmatrix}>0.$$
	Since $A$ is positive definite, $\vert w_i\vert<1$.
\end{proof}
We are now ready to prove theorems \ref{main2} and \ref{main}. 
\begin{proof}[Proof of Theorem \ref{main2}]
	We start by showing that $(i)\Leftrightarrow (ii)$. Suppose $(i)$ and for all $i\in\lbrace 1,\dots,d\rbrace$ let $u_{w_i}$ be the eigenvector of $W_{<1}$ with eigenvalue $w_i\in\setC^*$. Then, right Schur-invariance property \eqref{rightinv} together with the fact that $A_{LR}$ is invertible lead to
	\begin{equation}\label{phimatrix}
		A_{LL}+A_{RR}+A_{LR}W_{<1}+A_{RL}W_{<1}^{-1}=0
	\end{equation}
	which, applied to any eigenvector $u_{w_i}$ of $W_{<1}$ becomes
	$$\Phi_A(w_i)u_{w_i}=0.$$
	Therefore, $w_i\in\mathcal{S}$ and $u_{w_i}\in\ker\Phi_A(w_i)$ and lemma \ref{invtomodule} gives $w_i\in\mathcal{S}_{<1}$ so we recover $(ii)$.
	Suppose $(ii)$ and recall that, by theorem \ref{residues}, $(u_w)_{w\in\mathcal{S}_{<1}}$ is a basis of $\setC^d$. By construction of $W_{<1}$, we recover \eqref{phimatrix} which implies, since $A_{LR}$ is invertible, we get right invariance. Hermitian and positive definite properties of $B_R$ are given by lemma \ref{positivedef}. We then have $(i)$. It is clear that $(ii)\Leftrightarrow (iii)$. Note that $\lim_{k\to +\infty}x_k=0$ if and only if every eigenvalue $w$ is in $\mathcal{S}_{<1}$ (module strictly less than $1$). Finally, from theorem \ref{propre},
	$$\Lambda=\alpha(2\pi)^d\det(A_{LL}+A_{RR}+A_{LR}W_{<1})$$
	which, using \eqref{phimatrix}, gives
	\begin{equation*}
		\Lambda=\alpha(2\pi)^d\det(-A_{RL}W_{<1}^{-1})^{-1}=\alpha(2\pi)^d(-1)^d\frac{\prod_{w\in\mathcal{S}_{<1}}w}{\det(A_{RL})}.
	\end{equation*} 
\end{proof}

\begin{proof}[Proof of Theorem \ref{main}]
	Theorem \ref{main} can be deduced by remarking that if $B_L$ is a left Schur-invariant for $A$, then it is right Schur-invariant for $SAS^{-1}$. Applying theorem \ref{main2} for the matrix $SAS^{-1}$ and using identities of lemma \ref{leftright} we get the desired result. 
\end{proof}
\subsection{Comparison with Fourier approach}
Now that we gave explicit formulas for left and right eigen-boundary conditions, we are ready to fully compare this approach with the periodic boundary condition approach discussed in section \ref{fromtftomarkov}. Recall that the  link between free energies has already been given in \eqref{egalite_energies_libres}. In this section, we will first establish a correspondence between eigen-boundary conditions and Fourier coefficients of $\Phi_A^{-1}$, denoted $(C_k)_{k\in\setZ}$ (see \eqref{C_k}). Then, we will show that, under invariant boundary conditions, we do recover the marginal laws computed with periodic boundary conditions. Our construction of invariant boundaries $B_L$ and $B_R$ has an interesting application in terms of projective diagram discussed in the end of this section. 

For this whole section, fix $\alpha\in\setR_+^*$ and $A\in H_{2d}^+(\setC)$ satisfying assumptions \ref{full_rank} and \ref{multiplicite}. Let $B_L$ and $B_R$ be its associated eigen-boundary conditions, respectively given by theorems \ref{main} and \ref{main2}.
\subsubsection{Invariant boundaries and Fourier coefficients of $\Phi_A^{-1}$}
The following lemma gives an explicit link between matrices $W_{>1}$ and $W_{<1}$ and $(C_k)_{k\in\setZ}$, the Fourier coefficients of $\Phi_A^{-1}$.
\begin{lemma}\label{FCtoW}
	Let $W_{>1}$ and $W_{<1}$ respectively given by points $(ii)$ of Theorems \ref{main} and \ref{main2}. For all $k\geq 0$, we have
	\begin{equation}
		C_kC_0^{-1}=W_{>1}^{-k} \text{ and }C_{-k}C_0^{-1}=W_{<1}^k.
	\end{equation}	
\end{lemma}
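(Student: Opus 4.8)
The plan is to exploit the two explicit expansions we already have: Corollary \ref{expliFC} gives the Fourier coefficients as residue sums $C_{-k}=\sum_{w\in\mathcal{S}_{<1}}w^{k-1}\alpha_wP_w$ and $C_k=\sum_{w\in\mathcal{S}_{>1}}-\alpha_w w^{-(k+1)}P_w$ for $k\ge 0$, where $P_wx=\langle u_{1/\overline w},x\rangle u_w$ is a rank-one projector onto $\mathrm{Vect}_\setC(u_w)$. The key structural fact is that the operators $\{P_w\}_{w\in\mathcal S_{<1}}$ form a complete system of (non-orthogonal) projectors associated to the basis $(u_w)_{w\in\mathcal S_{<1}}$: since $\Phi(e^{\ii\theta})^{-1}=\sum_{w\in\mathcal S_{<1}}\big(\tfrac{\alpha_w}{e^{\ii\theta}-w}P_w+\tfrac{\alpha_{1/\overline w}}{e^{\ii\theta}-1/\overline w}P_{1/\overline w}\big)$ is invertible, one checks that the $P_w$ for $w\in\mathcal S_{<1}$ satisfy $P_wP_{w'}=\delta_{w,w'}P_w$ up to normalization and $\sum_{w\in\mathcal S_{<1}}c_wP_w$ being invertible forces the $\langle u_{1/\overline w},\cdot\rangle$ to be the dual basis of $(u_w)$, i.e. $\langle u_{1/\overline{w'}},u_w\rangle\ne 0$ only if $w=w'$. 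So each $P_w$ (suitably rescaled) is exactly the spectral projector sending $x\mapsto x(w)u_w$ in the notation \eqref{decomposition_dans_base_zeros}.

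First I would make that rescaling precise: set $\widetilde P_w = \langle u_{1/\overline w},u_w\rangle^{-1}P_w$, so that $\widetilde P_w$ is the genuine idempotent projector onto $\mathrm{Vect}_\setC(u_w)$ along the other basis directions, and $\sum_{w\in\mathcal S_{<1}}\widetilde P_w = I_d$, $\widetilde P_w\widetilde P_{w'}=\delta_{w,w'}\widetilde P_w$. Rewriting Corollary \ref{expliFC}, there is a scalar $\gamma_w$ (absorbing $\alpha_w$ and $\langle u_{1/\overline w},u_w\rangle$) with $C_{-k}=\sum_{w\in\mathcal S_{<1}}\gamma_w\, w^{k-1}\,\widetilde P_w$ for all $k\ge 0$. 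In particular $C_0=\sum_{w\in\mathcal S_{<1}}\gamma_w w^{-1}\widetilde P_w$, which is invertible (it is $\Phi^{-1}$'s zeroth coefficient, Hermitian positive definite by Lemma \ref{recursive} and Lemma \ref{Invertible}), hence each $\gamma_w w^{-1}\ne 0$, and $C_0^{-1}=\sum_{w\in\mathcal S_{<1}}\gamma_w^{-1}w\,\widetilde P_w$ by orthogonality of the idempotents. Therefore
\begin{align*}
C_{-k}C_0^{-1}=\Big(\sum_{w\in\mathcal S_{<1}}\gamma_w w^{k-1}\widetilde P_w\Big)\Big(\sum_{w'\in\mathcal S_{<1}}\gamma_{w'}^{-1}w'\widetilde P_{w'}\Big)=\sum_{w\in\mathcal S_{<1}}w^{k}\widetilde P_w.
\end{align*}
On the other hand, by point $(ii)$ of Theorem \ref{main2}, $W_{<1}$ is the matrix with $W_{<1}u_w=w u_w$ for $w\in\mathcal S_{<1}$, i.e. $W_{<1}=\sum_{w\in\mathcal S_{<1}}w\widetilde P_w$, so $W_{<1}^k=\sum_{w\in\mathcal S_{<1}}w^k\widetilde P_w$; this gives $C_{-k}C_0^{-1}=W_{<1}^k$. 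The identity $C_kC_0^{-1}=W_{>1}^{-k}$ follows by the completely symmetric computation using the $\mathcal S_{>1}$-expansion of $C_k$ from Corollary \ref{expliFC} together with point $(ii)$ of Theorem \ref{main}, which says $W_{>1}u_w=wu_w$ for $w\in\mathcal S_{>1}$ (so $W_{>1}^{-k}=\sum_{w\in\mathcal S_{>1}}w^{-k}\widetilde P_w$, matching $C_k=\sum_{w\in\mathcal S_{>1}}\delta_w w^{-(k+1)}\widetilde P_w$ and $C_0=\sum_{w\in\mathcal S_{>1}}\delta_w w^{-1}\widetilde P_w$ after the analogous rescaling).

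The main obstacle, and the step deserving the most care, is justifying that the rank-one operators $P_w$, $w\in\mathcal S_{<1}$, really are proportional to mutually orthogonal idempotents summing to the identity — equivalently that $(u_{1/\overline w})_{w\in\mathcal S_{<1}}$ is (up to scaling) the dual basis of $(u_w)_{w\in\mathcal S_{<1}}$. This is where Lemma \ref{rmksym} enters: $u_{1/\overline w}\in\ker\Phi(1/\overline w)=\mathrm{Im}\,\Phi(w)^\perp$, so $\langle u_{1/\overline w},u_{w'}\rangle$ vanishes when $u_{w'}\in\mathrm{Im}\,\Phi(w)$; combined with the invertibility of $\Phi(e^{\ii\theta})^{-1}=\sum_w \tfrac{\alpha_w}{e^{\ii\theta}-w}P_w$ over all $w\in\mathcal S$ — which already underlies the basis statement in Theorem \ref{residues} — one deduces the required biorthogonality among the $\mathcal S_{<1}$ vectors alone. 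Once this algebraic fact is nailed down, the rest is the bookkeeping of scalars above, all of which cancels cleanly. I would also double-check the $k=0$ edge case in Corollary \ref{expliFC} (the $w^{k-1}=w^{-1}$ term), which is exactly the one feeding $C_0$, to make sure no factor is dropped.
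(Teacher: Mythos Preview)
There is a genuine gap in the step you flag as ``the main obstacle''. You claim that the rank-one operators $P_w$, $w\in\mathcal S_{<1}$, are proportional to mutually annihilating idempotents, equivalently that $\langle u_{1/\overline{w'}},u_w\rangle=0$ for $w\neq w'$ in $\mathcal S_{<1}$. Your justification does not deliver this. Lemma~\ref{rmksym} only says $u_{1/\overline w}\perp\mathrm{Im}\,\Phi(w)$; for the biorthogonality you would need $u_{w'}\in\mathrm{Im}\,\Phi(w)$ whenever $w'\neq w$, but $\mathrm{Im}\,\Phi(w)=\ker\Phi(w)^{*\perp}=\mathrm{Vect}(u_{1/\overline w})^\perp$, so the condition $u_{w'}\in\mathrm{Im}\,\Phi(w)$ is \emph{exactly} the statement $\langle u_{1/\overline w},u_{w'}\rangle=0$ you are trying to prove --- the argument is circular. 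Invertibility of $\Phi(e^{\ii\theta})^{-1}$, or of $C_0=\sum_{w\in\mathcal S_{<1}}w^{-1}\alpha_wP_w$, only tells you that both $(u_w)_{w\in\mathcal S_{<1}}$ and $(u_{1/\overline w})_{w\in\mathcal S_{<1}}$ span $\setC^d$; it does not make them dual. Without $P_wP_{w'}=0$ for $w\neq w'$, your formula $C_0^{-1}=\sum_w\gamma_w^{-1}w\,\widetilde P_w$ is unjustified and the computation of $C_{-k}C_0^{-1}$ collapses.

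The paper sidesteps this entirely by computing $W_{<1}^kC_0$ instead of $C_{-k}C_0^{-1}$. The only fact needed is $W_{<1}P_w=wP_w$, which is immediate: $P_w$ has image $\mathrm{Vect}_\setC(u_w)$ and $W_{<1}u_w=wu_w$, so for any $x$ one has $W_{<1}P_wx=\langle u_{1/\overline w},x\rangle W_{<1}u_w=w\langle u_{1/\overline w},x\rangle u_w=wP_wx$. This requires nothing about $P_wP_{w'}$. Then $W_{<1}^kC_0=\sum_{w\in\mathcal S_{<1}}w^{-1}\alpha_w\,W_{<1}^kP_w=\sum_{w\in\mathcal S_{<1}}w^{k-1}\alpha_wP_w=C_{-k}$ by Corollary~\ref{expliFC}, and right-multiplying by $C_0^{-1}$ gives the claim. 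The lesson: act with $W_{<1}$ on the \emph{left} of $C_0$, where only the range of $P_w$ matters, rather than trying to invert $C_0$ in the $P_w$-``basis''.
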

\begin{proof}
	The case $k=0$ is trivial. Recall that, for all $w\in\mathcal{S}$ and all $x\in\setC^d$, $P_wx=\langle u_{1/\overline{w}}, x\rangle u_{w}$, see theorem \ref{residues}. Let $k>0$, corollary \ref{expliFC} and definition of $W_{<1}$ give 
	\begin{align*}
		W_{<1}^kC_0&=W_{<1}^k\sum_{w\in\mathcal{S}_{<1}}w^{-1}\alpha_w P_w\\
		&= \sum_{w\in\mathcal{S}_{<1}}w^{k-1}\alpha_wP_w =C_{-k}.
	\end{align*}
	Proof for the other identity is similar. 
\end{proof}
\begin{rmk}\label{markov2}
Take for instance $k$, $l\in\setN$. From the previous lemma we get
$$C_{-(k+l)}=W_{<1}^{k+l}C_0=W_{<1}^kW_{<1}^lC_0=C_{-k}C_0^{-1}C_{-l}$$
for all $k$, $l\in\setN$. More generally for all $k$, $l\in\setZ$ such that $kl\geq0$, we have
	\begin{equation}\label{markovv}
		C_{k+l}=C_kC_0^{-1}C_l.
	\end{equation}
This formula is the Markov property for the infinite volume Gaussian process $(X_k)_{k\in\setZ}$ defined in \eqref{Gibbs_process}, see theorem \ref{markov_prop}.
\end{rmk}
We deduce the following lemma which gives alternative formulas for eigen-boundary conditions $B_L$ and $B_R$.
\begin{lemma}\label{eigenbc}
	The left and right Schur-invariant matrices are of the form
	\begin{equation}
		B_{L}=A_{RR}+A_{RL}C_1C_{0}^{-1} \text { and } B_{R}=A_{LL}+A_{LR}C_{-1}C_{0}^{-1}
	\end{equation}
	Moreover,
	\begin{equation}\label{0dimweight}
		B_{L}+B_{R}=C_{0}^{-1}.
	\end{equation}
	
\end{lemma}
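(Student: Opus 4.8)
The plan is to combine Theorems \ref{main} and \ref{main2} with Lemma \ref{FCtoW}. First I would recall that by Theorem \ref{main2}, the right Schur-invariant matrix produced there is $B_R=A_{LL}+A_{LR}W_{<1}$, where $W_{<1}$ is the matrix diagonalized by $(u_w)_{w\in\mathcal{S}_{<1}}$ with eigenvalues $w$; symmetrically, by Theorem \ref{main}, the left Schur-invariant matrix is $B_L=A_{RR}+A_{RL}W_{>1}^{-1}$, with $W_{>1}^{-1}$ diagonal in $(u_w)_{w\in\mathcal{S}_{>1}}$ with eigenvalues $1/w$. Then I would invoke Lemma \ref{FCtoW} at $k=1$: it gives $C_{-1}C_0^{-1}=W_{<1}$ and $C_1C_0^{-1}=W_{>1}^{-1}$. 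Substituting these into the formulas for $B_R$ and $B_L$ yields directly
\begin{equation*}
	B_L=A_{RR}+A_{RL}C_1C_0^{-1},\qquad B_R=A_{LL}+A_{LR}C_{-1}C_0^{-1},
\end{equation*}
which is the first assertion.

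For the identity \eqref{0dimweight}, I would add the two expressions:
\begin{equation*}
	B_L+B_R=A_{LL}+A_{RR}+A_{RL}C_1C_0^{-1}+A_{LR}C_{-1}C_0^{-1}=\bigl((A_{LL}+A_{RR})C_0+A_{RL}C_1+A_{LR}C_{-1}\bigr)C_0^{-1}.
\end{equation*}
The bracketed quantity is exactly the left-hand side of the recursion \eqref{rec} evaluated at $k=0$ — more precisely its Hermitian-transpose version noted in the remark just after Lemma \ref{recursive}, namely $(A_{LL}+A_{RR})C_k+A_{RL}C_{k+1}+A_{LR}C_{k-1}=I_d\delta_{k,0}$ — so at $k=0$ it equals $I_d$. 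Hence $B_L+B_R=C_0^{-1}$.

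There is essentially no obstacle here: the lemma is a bookkeeping corollary of the three earlier results, and the only point requiring a little care is to use the correct (transposed) form of the recursion so that the matrix multiplications are on the correct side — one should use $(A_{LL}+A_{RR})C_0+A_{RL}C_1+A_{LR}C_{-1}=I_d$ rather than \eqref{rec} itself, since here the $C_k$ sit on the left. I would also note in passing that Hermitianity and positive-definiteness of $B_L$ and $B_R$ are already guaranteed by Theorems \ref{main} and \ref{main2} (via Lemma \ref{positivedef}), so nothing further needs to be checked.
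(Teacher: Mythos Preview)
Your proof is correct and follows essentially the same approach as the paper: the first part is deduced from Lemma~\ref{FCtoW} at $k=1$ combined with Theorems~\ref{main} and~\ref{main2}, and the second part from the $k=0$ recursion on the Fourier coefficients. The only cosmetic difference is that the paper factors $C_0^{-1}$ on the left (after implicitly taking Hermitian transpose of $B_L$ and $B_R$) and then invokes \eqref{rec} directly, whereas you factor $C_0^{-1}$ on the right and use the transposed recursion from the remark after Lemma~\ref{recursive}; both are equivalent.
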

\begin{proof}
	Direct consequence of lemma \ref{FCtoW}. Furthermore, we have
	\begin{align*}
		B_{L}+B_{R}&=A_{LL}+A_{RR}+C_{0}^{-1}C_{-1}A_{LR}+C_{0}^{-1}C_{1}A_{RL}\\
		&=C_{0}^{-1}(C_{0}(A_{LL}+A_{RR})+C_{-1}A_{LR}+C_{1}A_{RL})\\
		&=C_{0}^{-1}
	\end{align*}
	using the recursive formula \eqref{rec}. 
\end{proof}
\subsubsection{Recovering the marginal laws computed via Fourier transform.} 
In section \ref{fromtftomarkov}, we computed marginal laws under thermodynamic limit for homogeneous Gaussian Markov processes with periodic boundary conditions, see  \eqref{cvlaw}. In particular,  the limit in law of any $P+1$ consecutive vertices is a centered Gaussian with covariance matrix given by 
\begin{equation}
	\Sigma^{(P)}=(C_{l-k})_{k,l\in\set{1,\dots,P+1}},
\end{equation}
the block-Toeplitz matrix generated by Fourier coefficients of $\Phi_A^{-1}$.
In this subsection, we show that we recover this result when taking eigen-boundary conditions $B_L$ and $B_R$. More precisely, take $X^{(P)}$ the homogeneous Gaussian Markov process of size $P$ with weights $\ee_{\alpha,A}$ and eigen-boundary conditions  $B_L$ and $B_R$. Denote $Q^{(P)}_{A,\mathrm{eigen}}\coloneqq Q^{(P)}_{(A,B_L,B_R)}$ the matrix which appears in the distribution of $X^{(P)}$, see \eqref{matrice_couplage}. The next lemma shows that $X^{(P)}$ is also a centered Gaussian with covariance matrix given by the block-Toeplitz matrix generated by Fourier coefficients of $\Phi_A^{-1}$.
\begin{lemma}\label{cov-inverse}
	Under the eigen boundary conditions $B_{L}$ and $B_{R}$, $X^{(P)}$ is a centered Gaussian vector with covariance matrix given by the block-Toeplitz matrix
	$$\Sigma^{(P)}=(C_{l-k})_{k,l\in\set{1,\dots,P+1}}.$$
\end{lemma}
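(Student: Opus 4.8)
The plan is to show that $\Sigma^{(P)} = (C_{l-k})_{k,l\in\{1,\dots,P+1\}}$ is exactly the inverse of $Q^{(P)}_{A,\mathrm{eigen}} = Q^{(P)}_{(A,B_L,B_R)}$, the tridiagonal matrix from \eqref{matrice_couplage}. Since $X^{(P)}$ is centered Gaussian with precision matrix $Q^{(P)}_{A,\mathrm{eigen}}$, it suffices to verify $Q^{(P)}_{A,\mathrm{eigen}}\,\Sigma^{(P)} = I_{d(P+1)}$. First I would write out this product block by block. The rows of $Q^{(P)}_{A,\mathrm{eigen}}$ come in three types: the top boundary row $(B_L + A_{LL},\ A_{LR},\ 0,\dots)$, the generic interior row $(\dots,\ A_{RL},\ A_{RR}+A_{LL},\ A_{LR},\ \dots)$, and the bottom boundary row $(\dots,\ 0,\ A_{RL},\ A_{RR}+B_R)$.

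For the generic interior rows the relevant identity is precisely the recursion \eqref{rec}: $C_{k}(A_{LL}+A_{RR}) + C_{k+1}A_{RL} + C_{k-1}A_{LR} = I_d\,\delta_{k,0}$ (or rather its Hermitian-transposed companion $(A_{LL}+A_{RR})C_k + A_{RL}C_{k+1} + A_{LR}C_{k-1} = I_d\,\delta_{k,0}$, which is noted as a remark right after Lemma \ref{recursive}). Multiplying interior row $m$ of $Q^{(P)}_{A,\mathrm{eigen}}$ into column $j$ of $\Sigma^{(P)}$ gives $A_{RL}C_{j-(m-1)} + (A_{RR}+A_{LL})C_{j-m} + A_{LR}C_{j-(m+1)}$, which is $I_d\delta_{m,j}$ by that identity — this handles all interior rows, including their interaction with every column.

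The boundary rows are where the eigen-boundary conditions enter, and this is the main obstacle. For the top row I must check $(B_L + A_{LL})C_{j-1} + A_{LR}C_{j-2} = I_d\delta_{1,j}$ for $j=1,\dots,P+1$. Using Lemma \ref{eigenbc}, $B_L = A_{RR} + A_{RL}C_1 C_0^{-1}$, so $B_L + A_{LL} = A_{LL}+A_{RR} + A_{RL}C_1C_0^{-1}$. Writing $C_{j-1} = C_{-(1-j)}$ and invoking the semigroup/Markov identity \eqref{markovv} — specifically $C_1 C_0^{-1} C_{j-1} = C_{1 + (j-1)} = C_j$ when the indices have the right sign (here $j-1 \geq 0$), I get $(B_L+A_{LL})C_{j-1} = (A_{LL}+A_{RR})C_{j-1} + A_{RL}C_{j}$, so the top-row product becomes $(A_{LL}+A_{RR})C_{j-1} + A_{RL}C_{j} + A_{LR}C_{j-2}$, which is again $I_d\delta_{j-1,0} = I_d\delta_{1,j}$ by \eqref{rec}. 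The subtlety to watch is the sign condition in \eqref{markovv} (it holds for $kl\geq 0$): for the top row all the shifts stay on the nonnegative side, so this is fine, but I should state it carefully. The bottom row is symmetric: using $B_R = A_{LL}+A_{LR}C_{-1}C_0^{-1}$ and $C_{-1}C_0^{-1}C_{-k} = C_{-(k+1)}$, the product $A_{RL}C_{j-P} + (A_{RR}+B_R)C_{j-(P+1)}$ collapses to $A_{RL}C_{j-P} + (A_{RR}+A_{LL})C_{j-(P+1)} + A_{LR}C_{j-(P+2)} = I_d\delta_{j,P+1}$ by \eqref{rec} (noting $C_{j-(P+2)}$ multiplied by $A_{LR}$ does not literally appear in the bottom row but is forced to vanish because $j \le P+1$, so the phantom term is harmless — alternatively one uses that the bottom row has a zero in that slot and $j-(P+1)\le 0$ keeps us on the correct side of \eqref{markovv}).

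Putting the three row-types together shows $Q^{(P)}_{A,\mathrm{eigen}}\Sigma^{(P)} = I_{d(P+1)}$, hence $\Sigma^{(P)} = (Q^{(P)}_{A,\mathrm{eigen}})^{-1}$ is the covariance matrix of $X^{(P)}$, which is what we wanted. The only genuine care needed is bookkeeping the index ranges so that every application of the Markov identity \eqref{markovv} respects its sign hypothesis, and checking that the "missing" corner entries of the tridiagonal matrix (the zeros outside the three diagonals) are matched by Fourier coefficients that the recursion forces to behave correctly at the truncation — this is the step I would write out most carefully.
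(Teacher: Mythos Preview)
Your proposal is correct and essentially identical to the paper's own proof: both verify directly that $\Sigma^{(P)}$ and $Q^{(P)}_{A,\mathrm{eigen}}$ are inverses by multiplying them out block by block, using the recursion \eqref{rec} for the interior rows/columns and the explicit formulas of Lemma~\ref{eigenbc} together with the Markov identity \eqref{markovv} for the two boundary rows/columns (the paper computes $\Sigma^{(P)}Q^{(P)}_{A,\mathrm{eigen}}$, you compute $Q^{(P)}_{A,\mathrm{eigen}}\Sigma^{(P)}$, which is immaterial).

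One small wording issue: in your bottom-row computation the term $A_{LR}C_{j-(P+2)}$ is \emph{not} ``forced to vanish because $j\le P+1$'' --- it is a genuine, generally nonzero term produced by the substitution $B_R = A_{LL}+A_{LR}C_{-1}C_0^{-1}$ and the identity $C_{-1}C_0^{-1}C_{j-(P+1)}=C_{j-(P+2)}$; the three-term sum then equals $I_d\delta_{j,P+1}$ by \eqref{rec}, exactly as in your displayed line. Your ``alternatively'' clause about the sign condition in \eqref{markovv} is the correct explanation, so just drop the misleading ``phantom term'' remark.
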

\begin{proof}
	Since $X^{(P)}$ is obviously a centered Gaussian, we only have to prove that the matrix
	$Q_{A,\mathrm{eigen}}^{(P)}$ 
	is the inverse of $\Sigma^{(P)}$. The recursive formula \eqref{rec} gives 
	$$(\Sigma^{(P)} Q_{A,\mathrm{eigen}}^{(P)})_{k,l}=C_{k-l-1}A_{LR}+C_{k-l}(A_{LL}+A_{RR})+C_{k-l+1}A_{RL}=Id\delta_{k,l}$$
	for all $1<l<P+1$. Moreover, using lemma \ref{eigenbc},
	\begin{align*}
		(\Sigma^{(P)} Q_{A,\mathrm{eigen}}^{(P)})_{k,1}&=C_{1-k}(B_{L}+A_{LL})+C_{2-k}A_{RL}\\
		&=C_{1-k}(C_{0}^{-1}C_{-1}A_{LR}+A_{RR}+A_{LL})+C_{2-k}A_{RL}\\
		&=C_{-k}A_{LR}+C_{1-k}(A_{LL}+A_{RR})+C_{2-k}A_{RL}=Id\delta_{k,1}
	\end{align*}
	for all $1\leq k \leq P+1$ using equation \eqref{markovv}. Similarly, we find,
	$$(\Sigma^{(P)} Q_{A,\mathrm{eigen}}^{(P)}(A))_{k,P-1}=Id\delta_{k,P-1}.$$
\end{proof}
\paragraph{Projective diagram.} Given the covariance matrix $\Sigma^{(P)}$, we get the covariance matrix of the sub-connected component $\Sigma^{(P-1)}$ simply with a sub-block extraction, which we write $\Sigma^{(P-1)}=\mathrm{Proj}(\Sigma^{(P)})$. Given $Q_{A,\mathrm{eigen}}^{(P)}$ we could have also integrated the associated Gaussian distribution with respect to the left (or right) boundary variable to get the marginal distribution of the sub-connected component. We end up with a matrix $S(Q_{A,\mathrm{eigen}}^{(P)})$ obtained from a Schur complement of $Q_{A,\mathrm{eigen}}^{(P)}$. Thanks to eigen-boundary conditions this matrix is simply $Q_{A,\mathrm{eigen}}^{(P-1)}$, see \eqref{leftinv} and \eqref{rightinv}. We end up with the following projective diagram. 
\begin{equation}
	\begin{tikzcd}\label{eigendiagram}
		\cdots \arrow[r,"S"]& Q_{A,\mathrm{eigen}}^{(P+1)} \arrow[r, "S"] \arrow[d, "^{-1}"]& Q_{A,\mathrm{eigen}}^{(P)}\arrow[r, "S"] \arrow[d, "^{-1}"]
		& Q_{A,\mathrm{eigen}}^{(P-1)}\arrow[d,"^{-1}"]\arrow[r,"S"]& \cdots\\
		\cdots \arrow[r, "\mathrm{Proj}"]&\Sigma^{(P+1)} \arrow[r, "\mathrm{Proj}"] \arrow[u]\arrow[r, "\mathrm{Proj}"]& \Sigma^{(P)} \arrow[r, "\mathrm{Proj}"] \arrow[u]
		& \Sigma^{(P-1)}\arrow[r,"\mathrm{Proj}"]\arrow[u]& \cdots
	\end{tikzcd}.
\end{equation}
At the level of probability theory, this diagram does not have anything new. It is a simple consequence of marginalization under eigen-boundary conditions. Its translation at the level of matrices is more interesting. The family $(Q_{A,\mathrm{eigen}}^{(P)})_{P\in\setN^*}$ is a consistent family of matrices with respect to Schur "projections". The situation is different from the periodic case where we found a family of coherent but disconnected diagrams, see \eqref{periodicgdiagram}. Nevertheless, as $P\to\infty$ both approaches lead to the same operator $Q_A=C(\Phi_A)$, the block-circulant operator generated by $\Phi_A$. 
\subsection{Gluing property for $\Phi_A$}
We now give an application of theorems \ref{main} and \ref{main2}. Recall that for $n\in\setN^*$, $A^{[n]}$ is the $n$-th Schur power of $A\in H_{2d}^+(\setC)$, see \eqref{schurpower}. A consequence of theorem \ref{main2} is an easy proof of the following result which links $\Phi_A$ with $\Phi_{A^{[n]}}$. 
\begin{corollary}
	Let $A\in H_{2d}^+(\setC)$. Under assumptions \ref{full_rank} and \ref{multiplicite} and assuming  $\vert w\vert \neq \vert w' \vert$ for all $w\neq w'\in\mathcal{S}(\Phi_A)$, we have, for all $n\in\setN^*$,
	\begin{align*}
		\mathcal{S}(\Phi_A)&\to\mathcal{S}(\Phi_{A^{[n]}})\\
		w&\mapsto w^n
	\end{align*}
	is a bijection. Moreover ,$\Phi_{A^{[n]}}(w^n)u_w=0$ for all $w\in\mathcal{S}(\Phi_A)$.
\end{corollary}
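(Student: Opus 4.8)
The plan is to obtain the corollary from Theorems~\ref{main} and~\ref{main2} applied to the Schur power $A^{[n]}$, together with a short induction that identifies the ``$W$-matrix'' of $A^{[n]}$ with the $n$-th power of that of $A$, and a final dimension count. First I would record two preliminaries. Since $A^{[n]}=S(A^{[n-1]},A)$, Property~\ref{schur_prod} gives $A^{[n]}\in H_{2d}^+(\setC)$, and the block $(A^{[n]})_{LR}$ is a product of $A_{LR}$ with invertible matrices, so assumption~\ref{full_rank} holds for $A^{[n]}$ and Lemma~\ref{zeros} gives $\vert\mathcal{S}(\Phi_{A^{[n]}})\vert\le 2d$ counted with multiplicity. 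Next, let $\ee_{\beta_L,B_L}'$ be the left eigen-boundary density for $\ee_{\alpha,A}$ coming from Theorem~\ref{main}; iterating the eigenvalue identity of Theorem~\ref{propre} $n$ times and using associativity of the gluing products (Property~\ref{partitionfunction}), $\ee_{\beta_L,B_L}'$ is also a left eigen-boundary density for $\ee_{\alpha^n,A^{[n]}}$, so by Property~\ref{gluingboundwithweight} the matrix $B_L$ is left Schur-invariant for $A^{[n]}$; symmetrically $B_R$ from Theorem~\ref{main2} is right Schur-invariant for $A^{[n]}$.

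The heart of the argument is the identity $B_L=(A^{[n]})_{RR}+(A^{[n]})_{RL}W_{>1}^{-n}$ for every $n\ge1$, where $W_{>1}$ is the diagonalizable matrix of Theorem~\ref{main}, satisfying $W_{>1}u_w=wu_w$ for $w\in\mathcal{S}_{>1}(\Phi_A)$. I would prove it by induction on $n$, the base case $n=1$ being part~(i) of Theorem~\ref{main}. For the inductive step, write $B=A^{[n-1]}$ and $K'=(B_{RR}+A_{LL})^{-1}$, so that Property~\ref{schur_prod} gives $(A^{[n]})_{RR}=A_{RR}-A_{RL}K'A_{LR}$ and $(A^{[n]})_{RL}=-A_{RL}K'B_{RL}$; using part~(i) of Theorem~\ref{main} and~\eqref{leftinv} for $A$ one has $B_L=-A_{LL}-A_{LR}W_{>1}$, which combined with the inductive hypothesis $B_L=B_{RR}+B_{RL}W_{>1}^{-(n-1)}$ yields $A_{LR}+B_{RL}W_{>1}^{-n}=-(A_{LL}+B_{RR})W_{>1}^{-1}$, and substituting this into the two block formulas above gives $(A^{[n]})_{RR}+(A^{[n]})_{RL}W_{>1}^{-n}=A_{RR}+A_{RL}W_{>1}^{-1}=B_L$. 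With this in hand, combining the identity with the left Schur-invariance of $B_L$ for $A^{[n]}$, i.e.\ \eqref{leftinv} written for $A^{[n]}$, and cancelling the invertible factor $(A^{[n]})_{RL}$, one gets $B_L=-(A^{[n]})_{LL}-(A^{[n]})_{LR}W_{>1}^{n}$, hence
\[
(A^{[n]})_{LL}+(A^{[n]})_{RR}+(A^{[n]})_{LR}W_{>1}^{n}+(A^{[n]})_{RL}W_{>1}^{-n}=0 .
\]
Evaluating on $u_w$, which is an eigenvector of $W_{>1}^{\pm n}$ with eigenvalue $w^{\pm n}$, gives $\Phi_{A^{[n]}}(w^n)u_w=0$ for all $w\in\mathcal{S}_{>1}(\Phi_A)$; the mirror computation with $B_R$, Theorem~\ref{main2} and $W_{<1}$ handles $w\in\mathcal{S}_{<1}(\Phi_A)$. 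Thus $\Phi_{A^{[n]}}(w^n)u_w=0$ for every $w\in\mathcal{S}(\Phi_A)$.

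Finally I would conclude by counting. Assumption~\ref{multiplicite} gives $\vert\mathcal{S}(\Phi_A)\vert=2d$, and the hypothesis $\vert w\vert\neq\vert w'\vert$ for $w\neq w'$ makes $w\mapsto w^n$ injective on $\mathcal{S}(\Phi_A)$, since $w_1^n=w_2^n$ forces $\vert w_1\vert=\vert w_2\vert$. Hence $\{w^n : w\in\mathcal{S}(\Phi_A)\}$ is a set of $2d$ distinct zeros of $\Phi_{A^{[n]}}$, which by the bound $\vert\mathcal{S}(\Phi_{A^{[n]}})\vert\le 2d$ must be all of $\mathcal{S}(\Phi_{A^{[n]}})$, with every zero simple; in particular $A^{[n]}$ again satisfies assumption~\ref{multiplicite}, $\ker\Phi_{A^{[n]}}(w^n)=\mathrm{Vect}_{\setC}(u_w)$, and $w\mapsto w^n$ is a bijection $\mathcal{S}(\Phi_A)\to\mathcal{S}(\Phi_{A^{[n]}})$. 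The main obstacle I foresee is the induction: keeping the Schur-complement block algebra of Property~\ref{schur_prod} under control, and being careful to invoke assumption~\ref{multiplicite} only for $A$ (using it for $A^{[n]}$ would be circular) until the very last step, where the modulus hypothesis on the zeros is precisely what guarantees injectivity of $w\mapsto w^n$.
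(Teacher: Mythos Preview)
Your proof is correct, and it takes a genuinely different route from the paper's. The paper exploits part~(iii) of Theorems~\ref{main} and~\ref{main2}: since $(W_{<1}^k x_0)_{k\ge 0}$ solves the second-order recursion \eqref{harmonic} associated to $A$, the paper invokes ``Gaussian elimination (Schur complement)'' to conclude that the subsampled sequence $((W_{<1}^n)^k x_0)_{k\ge 0}$ solves the corresponding recursion for $A^{[n]}$, which immediately gives $\Phi_{A^{[n]}}(w^n)u_w=0$ without any block-algebra computation. You instead work through part~(i): you first argue that $B_L$ remains left Schur-invariant for $A^{[n]}$ by iterating the eigenvalue identity of Theorem~\ref{propre}, and then you run an explicit induction on the block formulas of Property~\ref{schur_prod} to identify the associated ``$W$-matrix'' of $A^{[n]}$ as $W_{>1}^{n}$. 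Both routes end at the same matrix identity
\[
(A^{[n]})_{LL}+(A^{[n]})_{RR}+(A^{[n]})_{LR}W^{n}+(A^{[n]})_{RL}W^{-n}=0,
\]
and the cardinality argument is identical. The paper's argument is shorter and more conceptual (the Schur power is precisely the operation that collapses $n$ steps of the recursion into one), while yours is more computational but entirely self-contained and has the virtue of never appealing to the somewhat informal ``Gaussian elimination'' step. Your care in avoiding circularity---using assumption~\ref{multiplicite} only for $A$ until the final count---is exactly right, and your bonus observation that $A^{[n]}$ then itself satisfies assumption~\ref{multiplicite} (since simple roots force one-dimensional kernels via the adjugate) is a nice strengthening not stated in the paper.
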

\begin{proof}
	Let $A\in H_{2d}^+(\setC)$ and $n\in\setN^*$. Since $A_{LR}$ is invertible, we deduce by induction that $A_{LR}^{[n]}$ is also invertible. Therefore, by lemma \ref{zeros}, $\mathcal{S}(\Phi_{A^{[n]}})$ is composed of at most $2d$ elements. Take $W_{<1}$ as defined in theorem \ref{main2}. Then, $(x_k)_{k\in\setN}=(W_{<1}^kx_0)_{k\in\setN}$ is solution to the discrete Dirichlet type problem \eqref{harmonic}. Thanks to Gaussian elimination (Schur complement), we get in particular, for all $x_0\in\setC^d$
	\begin{equation*}A_{RL}^{[n]}x_{0}^{[n]}+(A_{LL}^{[n]}+A_{RR}^{[n]})x_{1}^{[n]}+A_{LR}^{[n]}x_{2}^{[n]}=0
	\end{equation*}
	with $x_k^{[n]}=(W_{<1}^{n})^kx_0$ for any $k\in\setN$. It implies that for any $w\in\mathcal{S}_{<1}(\Phi_A)$
	$$\Phi_{A^{[n]}}(w^n)u_w=0$$
	and $w^n\in\mathcal{S}_{<1}(\Phi_{A^{[n]}})$. A similar argument also leads to $\Phi_{A^{[n]}}(w^n)u_w=0$ for all $w\in\mathcal{S}_{>1}(\Phi_A)$. We conclude by cardinality of $\mathcal{S}(\Phi_{A^{[n]}})$. 
\end{proof}
\section{Identities in law under infinite volume Gibbs measure}\label{lawid}
For this whole section, fix $\alpha\in\setR_+^*$ and $A\in H_{2d}^+(\setC)$ satisfying assumptions \ref{full_rank} and \ref{multiplicite}. As previously discussed in remark \ref{infinite_gibbs}, eigen-boudary conditions $B_L$ and $B_R$, defined in lemma \ref{eigenbc} or, equivalently, in theorems \ref{main} and \ref{main2},  provide an infinite volume Gibbs measure under which the homogeneous Gaussian Markov process $X=(X_k)_{k\in\setZ}$ of weight $(\alpha,A)$  is well defined. This section is devoted to the computation of conditional expectations and variances of this process in terms of matrices $W_{<1}$ and $W_{>1}$ given in theorems \ref{main} and \ref{main2}. 

We start by recalling the following result concerning Gaussian random vectors. 
\begin{lemma}\label{Gausss}
	Let $d$, $d'\in\setN^*$, and $(Y,Z)$ be a centered circularly symmetric Gaussian vector with $Y\in\setC^d$ and $Z\in\setC^{d'}$ and positive definite covariance matrix
	$$\Sigma=\begin{pmatrix} \Sigma_{YY} & \Sigma_{YZ}\\
		\Sigma_{ZY} & \Sigma_{ZZ}
	\end{pmatrix}.$$
	Then,
	\begin{equation}
		\mathbb{E}[Y \vert Z]= \Sigma_{YZ}\Sigma_{ZZ}^{-1}Z \text{ and } \text{Var}[Y\vert Z]= \Sigma_{YY}-\Sigma_{YZ}\Sigma_{ZZ}^{-1}\Sigma_{ZY}.
	\end{equation}
\end{lemma}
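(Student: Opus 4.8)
The plan is to use the classical \emph{orthogonalization} (residual) trick, taking care only to adapt it to the circularly symmetric complex setting used throughout the paper. First I would note that positive definiteness of $\Sigma$ forces $\Sigma_{ZZ}$ to be positive definite, hence invertible, so the matrix $M \coloneqq \Sigma_{YZ}\Sigma_{ZZ}^{-1}$ and the residual $W \coloneqq Y - MZ = Y - \Sigma_{YZ}\Sigma_{ZZ}^{-1}Z$ are well defined. Since $(W,Z)$ is a linear image of $(Y,Z)$, it is again a centered circularly symmetric Gaussian vector, and a one-line computation gives $\mathbb{E}[WZ^*] = \Sigma_{YZ} - \Sigma_{YZ}\Sigma_{ZZ}^{-1}\Sigma_{ZZ} = 0$; circular symmetry simultaneously kills the pseudo-covariance $\mathbb{E}[WZ^{\mathsf T}]$, so $W$ and $Z$ are independent (uncorrelated jointly Gaussian circularly symmetric vectors are independent).

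Given this, the two identities drop out. For the conditional mean, $\mathbb{E}[Y \mid Z] = \mathbb{E}[W \mid Z] + \Sigma_{YZ}\Sigma_{ZZ}^{-1}Z = \mathbb{E}[W] + \Sigma_{YZ}\Sigma_{ZZ}^{-1}Z = \Sigma_{YZ}\Sigma_{ZZ}^{-1}Z$, using that $W$ is centered and independent of $Z$. For the conditional variance, independence gives $\mathrm{Var}[Y \mid Z] = \mathrm{Var}[W \mid Z] = \mathrm{Var}[W]$, and expanding $\mathrm{Var}[W] = \mathbb{E}[WW^*]$ with $\Sigma_{ZY} = \Sigma_{YZ}^*$ yields $\Sigma_{YY} - \Sigma_{YZ}\Sigma_{ZZ}^{-1}\Sigma_{ZY} - \Sigma_{YZ}\Sigma_{ZZ}^{-1}\Sigma_{ZY} + \Sigma_{YZ}\Sigma_{ZZ}^{-1}\Sigma_{ZZ}\Sigma_{ZZ}^{-1}\Sigma_{ZY} = \Sigma_{YY} - \Sigma_{YZ}\Sigma_{ZZ}^{-1}\Sigma_{ZY}$, as claimed.

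This is a textbook fact, so there is no genuine obstacle; the only points worth a sentence of care are (i) the passage from ``uncorrelated'' to ``independent'', which in the complex setting requires checking that the pseudo-covariance vanishes — automatic here from circular symmetry — and (ii) the invertibility of $\Sigma_{ZZ}$, which follows from $\Sigma$ being positive definite. Accordingly the ``hard part'' is merely staying consistent with the circularly symmetric conventions; the algebra itself is entirely routine and I would not spell it out in full.
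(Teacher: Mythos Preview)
Your argument is correct and is the standard residual/orthogonalization proof of this classical fact; the paper does not actually prove the lemma at all but merely states it as a recalled result on Gaussian vectors, so there is nothing to compare against. Your attention to the circularly symmetric convention (vanishing pseudo-covariance, so that uncorrelated implies independent) is appropriate and is the only point that distinguishes the complex case from the real one.
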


We recall that, in the bases indexed by zeros of $\Phi_A$, the coordinate of any $x\in\setC^d$ associated to the vector $u_w$ with $w\in\mathcal{S}$ is denoted $x(w)$, see \eqref{decomposition_dans_base_zeros}. We have the following theorem.
\begin{theorem}\label{condexp} For all $l\in\setZ$, $k\in\setN$, we have
	\begin{equation}
		\mathbb{E}_{\mathrm{Gibbs}}[X_{l-k}\vert X_l]=C_{k}C_0^{-1}X_l=W_{>1}^{-k}X_l=\sum_{w\in\mathcal{S}_{>1}}w^{-k}X_l(w)u_w
	\end{equation}
	and
	\begin{equation}
		\mathbb{E}_{\mathrm{Gibbs}}[X_{l+k}\vert X_l]=C_{-k}C_0^{-1}X_l=W_{<1}^kX_l=\sum_{w\in\mathcal{S}_{<1}}w^kX_l(w)u_w.
	\end{equation} 
	Moreover, for all $k_1$, $k_2\in\setN$ such that $k_1 \leq k_2$, we have
	\begin{equation}\label{eq:condvar}
		\covcg{X_{l+k_1}}{X_{l+k_2}}{X_l}C_0^{-1}=W_{>1}^{k_1-k_2}-W_{<1}^{k_1}W_{>1}^{-k_2}.
	\end{equation}   
\end{theorem}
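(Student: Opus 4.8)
The plan is to reduce the whole statement to Lemma \ref{Gausss} applied to the block-Toeplitz covariance structure of Lemma \ref{cov-inverse}, and then to convert the resulting products of Fourier coefficients into powers of $W_{<1}$ and $W_{>1}$ using Lemma \ref{FCtoW}. First I would use translation invariance of the infinite volume Gibbs process $(X_k)_{k\in\setZ}$ (section \ref{fromtftomarkov}) to assume $l=0$. By Lemma \ref{cov-inverse} together with \eqref{cvlaw}, the joint law of any finite family of consecutive vertices is centered Gaussian with $\cov{X_i}{X_j}=C_{j-i}$; in particular $C_0$, being the covariance of $X_0$, is positive definite, hence invertible.

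For the conditional expectations I would apply Lemma \ref{Gausss} with $Z=X_0$ and $Y=X_{-k}$, respectively $Y=X_k$. Since $\cov{X_{-k}}{X_0}=C_k$ and $\cov{X_k}{X_0}=C_{-k}$, this yields $\mathbb{E}_{\mathrm{Gibbs}}[X_{-k}\vert X_0]=C_kC_0^{-1}X_0$ and $\mathbb{E}_{\mathrm{Gibbs}}[X_k\vert X_0]=C_{-k}C_0^{-1}X_0$. Lemma \ref{FCtoW}, valid for $k\geq 0$, rewrites these as $W_{>1}^{-k}X_0$ and $W_{<1}^kX_0$; expanding $X_0$ in the basis $(u_w)_{w\in\mathcal{S}_{>1}}$ (resp. $(u_w)_{w\in\mathcal{S}_{<1}}$) and using $W_{>1}u_w=wu_w$ from point $(ii)$ of theorem \ref{main} (resp. $W_{<1}u_w=wu_w$ from point $(ii)$ of theorem \ref{main2}) gives the eigen-expansions $\sum_{w\in\mathcal{S}_{>1}}w^{-k}X_0(w)u_w$ and $\sum_{w\in\mathcal{S}_{<1}}w^{k}X_0(w)u_w$.

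For the conditional covariance \eqref{eq:condvar} I would use the cross-covariance version of the Gaussian conditioning formula — obtained by applying Lemma \ref{Gausss} to the joint vector $(X_{k_1},X_{k_2})$ conditioned on $X_0$ and reading off the off-diagonal block of the conditional covariance matrix — namely
$$\covcg{X_{k_1}}{X_{k_2}}{X_0}=\cov{X_{k_1}}{X_{k_2}}-\cov{X_{k_1}}{X_0}\,C_0^{-1}\,\cov{X_0}{X_{k_2}}=C_{k_2-k_1}-C_{-k_1}C_0^{-1}C_{k_2}.$$
Right-multiplying by $C_0^{-1}$ and applying Lemma \ref{FCtoW} to each of $C_{k_2-k_1}C_0^{-1}$, $C_{-k_1}C_0^{-1}$ and $C_{k_2}C_0^{-1}$ — all with non-negative index, since $0\leq k_1\leq k_2$ — produces $W_{>1}^{k_1-k_2}-W_{<1}^{k_1}W_{>1}^{-k_2}$, as claimed.

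The computations are entirely routine; the only care needed is the bookkeeping of signs and index ranges so that every invocation of Lemma \ref{FCtoW} involves a non-negative power. This is precisely why the hypothesis $k_1\leq k_2$ is imposed in \eqref{eq:condvar}: it guarantees $C_{k_2-k_1}C_0^{-1}=W_{>1}^{-(k_2-k_1)}$ rather than an expression in $W_{<1}$. The other point to be explicit about is that conditioning under the infinite volume measure is entirely determined by finite-dimensional marginals, which is where the consistency content of Lemma \ref{cov-inverse} (and not merely the periodic computation) enters.
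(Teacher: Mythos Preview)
Your proof is correct and follows essentially the same route as the paper's own argument: both reduce to Lemma~\ref{Gausss} applied to the Toeplitz covariance structure of Lemma~\ref{cov-inverse}, and then invoke Lemma~\ref{FCtoW} to pass from $C_kC_0^{-1}$ to powers of $W_{<1}$ and $W_{>1}$. Your write-up is simply more explicit about the index bookkeeping (in particular, why $k_1\leq k_2$ is needed so that $C_{k_2-k_1}C_0^{-1}$ lands on the $W_{>1}$ side of Lemma~\ref{FCtoW}) and about the translation-invariance reduction to $l=0$.
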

\begin{proof}
	We start with conditional expectations and we only prove the second line (the proof for the first one is similar). Under eigen boundary conditions, lemma \ref{cov-inverse} gives $\var{X_l}=C_0$ and $\covg{X_{l+k}}{X_l}=C_{-k}$ for all $l\in\setZ$, $k\in\setN$. Lemma \ref{Gausss} gives $\mathbb{E}_{\mathrm{Gibbs}}[X_{l-k}\vert X_{l}]=C_{-k}C_0^{-1}X_l$. Lemma \ref{FCtoW} together with definition of $W_{<1}$ gives the two other identities. Just as for conditional expectation, \eqref{eq:condvar} is a direct consequence of lemmas \ref{Gausss}, \ref{cov-inverse} and \ref{FCtoW}.
\end{proof}
\begin{rmk} Given $X_l$, the process is no longer centered and invariant by translation. We remark that the conditional expectations are given by two semi-groups generated by the matrices $W_{<1}$ and $W_{>1}$. In the bases indexed by zeros of $\Phi_A$, they are simply given by the semi-groups generated by multiplying each coordinate, respectively, by $w\in\mathcal{S}_{<1}$ and $w^{-1}$ with $w\in\mathcal{S}_{>1}$. These conditional expectations decrease to zero with exponential rate. Now, consider $\covcg{X_{l+k_1}}{X_{l+k_2}}{X_l}$ for $k_1$, $k_2\in\setN$. It is a sum of two terms. The first term, $W_{>1}^{k_1-k_2}C_0=\covg{X_{l+k_1}}{X_{l+k_2}}$ (thanks to lemmas \ref{FCtoW} and \ref{cov-inverse}), only depends on the distance between $X_{l+k_1}$ and $X_{l+k_2}$. This term is penalized with $(-W_{<1}^{k_1}W_{>1}^{-k_2}C_0)$ which depends on the distances of $X_{l+k_1}$ and $X_{l+k_2}$ from the conditioning random variable $X_l$. This penalization decreases with exponential rate as $k_1$ and $k_2$ get larger. In other words, when  $X_{l+k_1}$ and $X_{l+k_2}$ are far away from $X_l$, their conditional law is close to their law without conditioning.
\end{rmk}
\begin{rmk}
	As  we already saw, the quantities  $(W_{>1}^{-k}x_0)_{k=0}^{-\infty}$ and $(W_{<1}^kx_0)_{k=0}^{+\infty}$  solve the discrete harmonic problems \eqref{dirich_1} and \eqref{harmonic} associated to the nearest neighbor operator generated by $A$. Indeed, it is known that conditional expectations and harmonic extensions are similar objects for Gaussian Processes. Here, we recover that $(\mathbb{E}_{\mathrm{Gibbs}}[X_{l+k}\vert X_l])_{k\in\setN}$ is the harmonic extension of $X_l$. 
\end{rmk}
\section{The case $A_{LR}$ non-invertible}\label{non-invertible}
The previous sections dealt with the case $A_{LR}$ invertible, see assumption \ref{full_rank}. At the level of the function $\Phi$, studied in section \ref{phi}, this assumption is only made to ensure that $\Phi$ has $2d$ zeros counted with multiplicity, see lemma \ref{zeros} and that, with theorem \ref{residues} we have two families of $d$ vectors which form bases of $\setC^d$, see definition \ref{basisindexedbyzeros}. At the level of invariant boundaries this assumption was made to make simpler the resolution of, for instance, right Schur-invariance equation \eqref{rightinv}. When $A_{LR}$ is invertible, this equation is equivalent to \eqref{phimatrix}, see proof of theorem \ref{main2}, and this establish the link between invariance property and zeros of $\Phi$.

In this section, we show how to extend the results to the case where $A_{LR}$ is non-invertible. To do so, we will first study the function $\Phi$ in this new context. 
As we will see, it has now $2q$ zeros with $q<d$. In particular, the bases indexed by zeros of $\Phi$ now need to be completed with the bases of $\ker A_{LR}$  and $\ker A_{RL}$ to form bases of $\setC^d$. Similarly to theorem \ref{residues}, we get a decomposition of $\Phi^{-1}$ and its Fourier coefficients along this new set of vectors. Secondly, we will build left and right Schur-invariant boundary conditions in terms of matrices similar to $W_{<1}$ and $W_{>1}$ defined in theorem \ref{main} and \ref{main2}. These matrices are now diagonal in the bases indexed by zeros of $\Phi$ completed with the bases $\ker A_{LR}$ and $\ker A_{RL}$. Especially, they are no longer invertible (their kernels are respectively $\ker(A_{R
	L})$ and $\ker(A_{LR})$). 

\begin{assumptionp}{I'}\label{not_full_rank} For the whole section, let $d\in\setN^*$ and fix $A\in H_{2d}^+(\setC)$. Take $k\in \lbrace 1,\dots,d\rbrace$. We will now work under the following assumption
	\begin{equation}
		\mathrm{rank}(A_{LR})=d-k.
	\end{equation}
	Fix $(u_i^{A_{LR}})_{i=1}^k$ and $(u_i^{A_{RL}})_{i=1}^k$ respectively basis of $\ker A_{LR}$ and $\ker A_{RL}$. 
\end{assumptionp}

\subsection{A short study of $\Phi$}
The function $\Phi$ has been fully studied in section \ref{phi} in the case $A_{LR}$ invertible. Recall that any point of $\setC^*$ where $\Phi$ is not invertible is called a zero of $\Phi$, see \eqref{zeros_of_phi}. When $A_{LR}$ is not invertible, $\Phi$ has less zeros. Moreover, the bases indexed by its zeros, defined in \ref{basisindexedbyzeros}, are no longer bases of $\setC^d$. They need to be completed by the bases of $\ker A_{LR}$ and $\ker A_{RL}$. We start by giving a technical lemma which will be helpful for the whole section.   
\begin{lemma}
	Let $\Psi:\setC\to M_d(\setC)$ polynomial. Assume there exists $u\in\setC$ such that $\mathrm{rank}(\Psi(u))=d-k$ for $k\geq 2$. Then,
	\begin{equation}\label{derivee_adj}
		(\mathrm{adj}\Psi(u))^{(l)}=0
	\end{equation}
	for all $0\leq l\leq k-2$.
\end{lemma}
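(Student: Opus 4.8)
The plan is to reduce the statement to an elementary fact about the order of vanishing of a determinant of a polynomial matrix whose value at $u$ has a rank drop, and then apply that fact to the minors that make up the adjugate. Recall that each entry of $\mathrm{adj}\,\Psi(z)$ is, up to a sign, a $(d-1)\times(d-1)$ minor of $\Psi(z)$, i.e.\ $\det N(z)$ where $N(z)$ is obtained from $\Psi(z)$ by deleting one row and one column; each such $N(z)$ is again polynomial in $z$. So it suffices to show that every such minor vanishes to order at least $k-1$ at $u$, that is, that $(z-u)^{k-1}$ divides $\det N(z)$: this forces $(\det N)^{(l)}(u)=0$ for $0\le l\le k-2$, hence $(\mathrm{adj}\,\Psi(u))^{(l)}=0$ in the same range.

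First I would isolate the following lemma: if $M:\setC\to M_m(\setC)$ is polynomial and $\mathrm{rank}(M(u))\le m-j$ for some $j\ge 1$, then $(z-u)^j$ divides $\det M(z)$. To prove it I would choose a \emph{constant} invertible matrix $P$ such that the bottom $j$ rows of $PM(u)$ vanish — possible exactly because $\mathrm{rank}(M(u))\le m-j$ — and set $\widetilde M(z)=PM(z)$, which is still polynomial. Writing $\widetilde M(z)=\widetilde M(u)+(z-u)R(z)$ with $R$ a polynomial matrix, the bottom $j$ rows of $\widetilde M(z)$ are precisely $(z-u)$ times the corresponding rows of $R(z)$; by multilinearity of the determinant in its rows one pulls out one factor $(z-u)$ from each of these $j$ rows, obtaining $\det\widetilde M(z)=(z-u)^j\det\widehat M(z)$ with $\widehat M$ polynomial. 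Since $\det M(z)=(\det P)^{-1}\det\widetilde M(z)$, the two have the same order of vanishing at $u$, and the claim follows.

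To finish I would apply this lemma with $m=d-1$ and $j=k-1$ (here $j\ge 1$ since $k\ge 2$) to each minor $N(z)$ above: deleting a row and a column cannot increase rank, so $\mathrm{rank}(N(u))\le\mathrm{rank}(\Psi(u))=d-k=(d-1)-(k-1)$, and the lemma gives $(z-u)^{k-1}\mid\det N(z)$. Running this over all entries of the adjugate yields $(\mathrm{adj}\,\Psi(u))^{(l)}=0$ for all $0\le l\le k-2$, which is \eqref{derivee_adj}.

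I expect the only genuinely delicate point to be the lemma, and within it the observation that the row-reduction matrix $P$ can be taken independent of $z$ — so that $\widetilde M(z)=PM(z)$ remains polynomial and $\det M$, $\det\widetilde M$ share the order of vanishing at $u$ — together with the bookkeeping in the multilinearity step; the passage from the lemma to the adjugate, and the rank inequality for submatrices, are then purely formal.
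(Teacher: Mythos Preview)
Your proof is correct and takes a genuinely different route from the paper's. The paper argues by induction on the order $l$ of the derivative, strengthening the statement to cover all iterated submatrices $\Psi_{I_{n-1},J_{n-1}}$ (obtained by deleting $n-1$ rows and columns) for $n<k-l$; the base case $l=0$ is the vanishing of all minors of size $>d-k$, and the inductive step combines Jacobi's formula $(\det M)'=\tr(\mathrm{adj}(M)\,M')$ with Leibniz's rule to express the $(l+1)$-th derivative of each cofactor in terms of lower-order derivatives of adjugates of one-step-smaller submatrices. You instead isolate a clean order-of-vanishing lemma for $\det M(z)$ when $M(u)$ has a rank drop, prove it by a constant row reduction plus multilinearity, and apply it once to each $(d-1)\times(d-1)$ minor via the trivial bound $\mathrm{rank}(N(u))\le\mathrm{rank}(\Psi(u))$. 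Your argument is more direct and conceptually transparent (it identifies exactly the algebraic content: $(z-u)^{k-1}$ divides every size-$(d-1)$ minor), while the paper's induction, though heavier, proves in passing the analogous statement for all smaller adjugates --- a strengthening that is not actually used elsewhere.
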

\begin{proof} 
	For any $n\in\setN^*$ such that $n \leq d$, we let
	$$E_n=\lbrace 1,\dots,d\rbrace \times \lbrace 1,\dots,d-1\rbrace\times\cdots \times\lbrace 1,\dots,d-n+1\rbrace.$$
	and $E_0=\emptyset$. For any $n\geq 1$, let $I_n=(i_1,\dots,i_n)\in E_n$ and $J_n=(j_1,\dots,j_n)\in E_n$. If $n=1$, for all $z\in\setC$, $\Psi_{i_1,j_1}(z)\in M_{d-1}(\setC)$ is the matrix obtained from $\Psi(z)$ by erasing its $i_1$-th row and $j_1$-th column. If $n\geq 2$, we define, recursively, for all $z\in\setC$, $\Psi_{I_n,J_n}(z)\in M_{d-n}(\setC)$ as the matrix obtained from $\Psi_{I_{n-1},J_{n-1}}(z)\in M_{d-n+1}(\setC)$ by erasing its $i_n$-th row and $j_n$-th column, with $I_{n-1}=(i_1,\dots,i_{n-1})$ and $J_{n-1}=(j_1,\dots,j_{n-1})$. If $n=0$ we keep $\Psi(z)$ by convention. 
	If we show that, for all $0\leq l<k-1$, all $1\leq n<k-l$, and any $I_{n-1}$ $J_{n-1}\in E_{n-1}$,
	\begin{align}\label{recc}
		\mathrm{adj}(\Psi_{I_{n-1},J_{n-1}}(u))^{(l)}=0
	\end{align}
	then \eqref{derivee_adj} will follow from the specific case of $n=1$, by convention. We prove \eqref{recc} recursively on $l$. Take $l=0$. Then,
	$$\mathrm{adj}\Psi_{I_{n-1},J_{n-1}}(u)=0$$
	for $z=u$ and all $n<k$ and any $I_{n-1}$, $J_{n-1}\in E_{n-1}$. Indeed, $\mathrm{adj}\Psi_{I_{n-1},J_{n-1}}(u)$ is a matrix whose coefficients are proportional to minors of order $(d-n)$ of $\Psi(u)$ which are $0$ for all $n<k$ (since $\mathrm{rank}(\Psi(u))=d-k$).  Suppose \eqref{recc} true for all $m\leq l$. Then, given $1\leq n<k-l-1$ (such $n$ exists as long as $l<k-2$) and any $I_{n-1}=(i_1,\dots,i_{n-1})\in E_{n-1}$ and $J_{n-1}=(j_1,\dots,j_{n-1})\in E_{n-1}$,
	\begin{align*}
		\left((\mathrm{adj}\Psi_{I_{n-1},J_{n-1}}(z))^{(l+1)}\right)_{i_n,j_n}&=(-1)^{i_n+j_n}(\det\Psi_{I_n,J_n}(z))^{(l+1)}\\
		&=(-1)^{i_n+j_n}\tr\left(\sum_{m=0}^l\begin{pmatrix}
			l\\
			m
		\end{pmatrix}(\mathrm{adj}\Psi_{I_n,J_n}(z))^{(m)}\Psi_{I_n,J_n}^{(l-m+1)}(z)\right).
	\end{align*}
	where $I_n=(i_1,\dots,i_{n-1},i_n)$ and $J_n=(j_1,\dots,j_{n-1},j_n)$, using Jacobi's formula and Leibniz's rule. For $z=u$ we apply $\eqref{recc}$ to $(\mathrm{adj}\Psi_{I_n,J_n}(u))^{(m)}$ for all $m\leq l$ and this conclude.
\end{proof}
\paragraph{Zeros of $\Phi$.} Recall that we denote $\mathcal{S}$ the set of zeros of $\Phi$, see \eqref{zeros_of_phi}. When $A_{RL}$ is invertible we saw that, counted with multiplicity, $\Phi$ has $2d$ zeros.  When $\mathrm{rank}(A_{RL})=d-k$ is not invertible, there are at most $2(d-k)$ zeros counted with multiplicity. This is the purpose of the following lemma. 
\begin{lemma}\label{polynome_non_inversible} Let $\Psi(z)=z\Phi(z)=A_{LR}z^2+(A_{LL}+A_{RR})z+A_{RL}$ for all $z\in\setC$. Under assumption \ref{not_full_rank}, the polynomial
	\begin{equation}\label{P}
		P(z)=\det\Psi(z)
	\end{equation}
	for all $z\in\setC$ is of degree at most $2d-k$. Moreover $0$ is of multiplicity at least $k$. Therefore, $\Phi$ admits at most $2(d-k)$ zeros counted with multiplicity. Finally, for all $w\in\mathcal{S}$, $1/\overline{w}\in\mathcal{S}$. 
\end{lemma}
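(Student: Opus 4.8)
The plan is to reduce the whole statement to a single fact about matrix polynomials, namely that \emph{if $M\colon\setC\to M_d(\setC)$ is a polynomial with $\mathrm{rank}(M(0))=d-k$, then $0$ is a zero of $\det M$ of multiplicity at least $k$}, and then to apply this fact twice: once to $\Psi$ itself, to control the multiplicity of $0$ as a root of $P$, and once to the reversed polynomial $\widetilde\Psi(w)=w^{2}\Psi(1/w)=A_{LR}+w(A_{LL}+A_{RR})+w^{2}A_{RL}$ (which is $\Psi$ with $A_{LR}$ and $A_{RL}$ exchanged), to control $\deg P$, i.e.\ the behaviour at $\infty$. Two preliminary remarks: by the Hermitian property of $A$ we have $A_{RL}=A_{LR}^{*}$, so $\mathrm{rank}(A_{RL})=\mathrm{rank}(A_{LR})=d-k$, and in particular both $\Psi(0)=A_{RL}$ and $\widetilde\Psi(0)=A_{LR}$ are rank-deficient of rank $d-k$; moreover $P\not\equiv 0$, since exactly as in Lemma \ref{Invertible} (whose proof uses only positive definiteness of $A$, not assumption \ref{full_rank}) one checks from \eqref{defphii} that $\Phi(e^{\ii\theta})$ is positive definite, hence invertible, for every $\theta$, so that $\det\Phi$, and therefore $P$, is not identically zero and its degree and root multiplicities make sense.

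To prove the reduction fact: for $k=1$ it is just $\det M(0)=0$, as $M(0)$ is singular. For $k\ge 2$, the preceding lemma, applied to $M$ at the point $u=0$, gives $(\mathrm{adj}\,M(0))^{(l)}=0$ for all $0\le l\le k-2$. Differentiating Jacobi's identity $(\det M)'=\tr\!\big(\mathrm{adj}(M)\,M'\big)$ with Leibniz's rule and evaluating at $0$ yields
\[
(\det M)^{(l+1)}(0)=\sum_{m=0}^{l}\binom{l}{m}\tr\!\Big((\mathrm{adj}\,M(0))^{(m)}\,M^{(l-m+1)}(0)\Big)=0\qquad(0\le l\le k-2),
\]
because each factor $(\mathrm{adj}\,M(0))^{(m)}$ with $m\le l\le k-2$ vanishes. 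Together with $\det M(0)=0$, this says that $\det M$ and its first $k-1$ derivatives vanish at $0$, i.e.\ $0$ is a zero of order at least $k$.

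Applying this with $M=\Psi$ shows that $0$ is a root of $P=\det\Psi$ of multiplicity $\ge k$. Applying it with $M=\widetilde\Psi$ shows that $0$ is a root of $\det\widetilde\Psi$ of multiplicity $\ge k$; since $\det\widetilde\Psi(w)=w^{2d}P(1/w)$ and $\deg P\le 2d$ trivially, the coefficient of $w^{i}$ in $\det\widetilde\Psi$ is the coefficient of $z^{2d-i}$ in $P$, so the vanishing of the lowest $k$ coefficients of $\det\widetilde\Psi$ is exactly the statement $\deg P\le 2d-k$. Finally $P(z)=z^{d}\det\Phi(z)$, so $\mathcal S$, counted with multiplicity, consists precisely of the nonzero roots of $P$; since $0$ is a root of $P$ of multiplicity $\ge k$ and $\deg P\le 2d-k$, we get $\#\mathcal S\le(2d-k)-k=2(d-k)$. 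The last assertion is copied verbatim from Lemma \ref{zeros}: if $w\in\mathcal S$ then, by \eqref{conj}, $\det\Phi(1/\overline w)=\det\!\big(\Phi(w)^{*}\big)=\overline{\det\Phi(w)}=0$, so $1/\overline w\in\mathcal S$.

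The only genuine obstacle is the reduction fact --- more precisely, the vanishing of the low-order derivatives of $\mathrm{adj}\,\Psi$ at a rank-deficient point --- but this has already been isolated in the lemma proved just above, so here it costs nothing; everything else is routine bookkeeping with Jacobi's identity and the palindromic substitution $z\leftrightarrow 1/w$.
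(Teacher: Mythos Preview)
Your proof is correct and follows essentially the same line as the paper's: both invoke the preceding adjugate-derivative lemma together with Jacobi's formula to show that $P$ vanishes to order at least $k$ at $0$, and then use a reversal argument to bound $\deg P$. The only cosmetic difference is that the paper uses the conjugate symmetry $p_l=\overline{p_{2d-l}}$ (coming from $\Psi(z)=z^{2}\Psi(1/\bar z)^{*}$) to transfer the vanishing from the low to the high coefficients, whereas you apply the multiplicity fact a second time to the reversed polynomial $\widetilde\Psi$; either route works.
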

\begin{proof} We write $P(z)=\sum_{l=0}^{2d}p_lz^l$ for all $z\in\setC$. Moreover, we have $\Psi(z)=z^2\Psi(1/\overline{z})^*$ for all $z\in\setC^*$ so that,
	\begin{align*}
		\det\Psi(z)=z^{2d}\det\Psi(1/\overline{z})^*= \sum_{l=0}^{2d}\overline{p_{2d-l}}z^l.
	\end{align*}
	Thus, $p_l=\overline{p_{2d-l}}$ for all $l\in\lbrace0,\dots,2d\rbrace$. Moreover, $P(0)=\det(A_{RL})=0$ by hypothesis and for all $z\in\setC$ and $l\geq 1$,
	$$P^{(l)}(z)=\tr\left(\sum_{m=0}^{l-1}\begin{pmatrix}
		l-1\\
		m
	\end{pmatrix}(\mathrm{adj}\Psi(z))^{(m)}\Psi^{(m-l+2)}(z)\right),$$
	which is zero for $z=0$ and any $l\leq k-1$ by \eqref{derivee_adj}. Then, $p_{2d-l}=\overline{p_l}=0$ for all $l\leq k-1$ and $P$ is of degree at most $2d-k$. Since for all $z\in\setC^*$, $P(z)=0$ if and only if $\det\Phi(z)=0$, we deduce that $\Phi$ admits at most $2(d-k)$ zeros. For the last assertion, see lemma \ref{zeros}.
\end{proof}
\begin{assumptionp}{II'}\label{multiplicite_non_inversible}
	For any root $x$ of $P$ defined previously in \eqref{P}, let $\mathrm{mult}(x)$ be its multiplicity. To make things easier, we make the following assumptions
	\begin{equation}
		\mathrm{mult}(w)=\dim\ker\Phi(w)=1 \text{ for all } w\in\mathcal{S}\text{, and } \mathrm{mult}(0)=k.
	\end{equation}
	In that case, $P$ is exactly of degree $2d-k$ and $\Phi$ has exactly $d-k$ non zero roots inside the unit disk and $d-k$ outside. For all $w\in\mathcal{S}$, we fix $u_w\neq 0$ such that
	\begin{equation}
		\Phi(w)=\mathrm{Vect}_\setC(u_w).
	\end{equation}
\end{assumptionp}
\paragraph{The bases indexed by zeros of $\Phi$.}
Compared to definition \ref{basisindexedbyzeros}, the families $(u_w)_{w\in\mathcal{S}_{<1}}$ and $(u_w)_{w\in\mathcal{S}_{>1}}$ are no longer bases of $\setC^d$ but, as we will see in theorem \ref{residues_non_inversible}, $((u_w)_{w\in\mathcal{S}_{<1}},(u_i^{A_{RL}})_{i=1}^k)$ and $((u_w)_{w\in\mathcal{S}_{>1}},(u_i^{A_{LR}})_{i=1}^k)$ are, where $(u_i^{A_{LR}})_{i=1}^k$ and $(u_i^{A_{RL}})_{i=1}^k$ are respectively basis of $\ker A_{LR}$ and $\ker A_{RL}$. We first need two lemmas. 
\begin{lemma}\label{adj_non_inversible} Let $\Psi(z)=z\Phi(z)=A_{LR}z^2+(A_{LL}+A_{RR})z+A_{RL}$ for all $z\in\setC$. Under assumptions \ref{not_full_rank} and \ref{multiplicite_non_inversible},
	\begin{equation}\label{adj_polynomial}
		\mathrm{adj}\Psi(z)=\sum_{l=k-1}^{2d-k-1}\Psi_lz^l
	\end{equation}
	for all $z\in\setC$ with. Moreover for all $l\in \lbrace k-1,\dots, 2d-k-1\rbrace$, $\Psi_l\in M_d(\setC)$ and
	\begin{equation}
		\Psi_l^*=\Psi_{2(d-1)-l}.
	\end{equation}
	Finally, the matrices $\Psi_{k-1}$ and $\Psi_k$ are non zero and for all $x\in\setC^d$,
	\begin{equation}\label{proj}
		\Psi_{k-1}x=\sum_{i,j=1}^{k}\alpha_{i,j}\langle u_{A_{LR}}^{(i)},x\rangle u_{A_{RL}}^{(j)}
	\end{equation}
	with $\alpha_{i,j}\in\setC$ for all $i$, $j\in\lbrace 1,\dots, k\rbrace$. 
\end{lemma}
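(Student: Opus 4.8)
The plan is to combine three standard ingredients: $\mathrm{adj}\Psi(z)$ is a matrix polynomial of degree at most $2(d-1)$; the Hermitian structure of $A$ gives a functional equation relating $\Psi(z)$ at $z$ and at $1/z$; and $\Psi(z)\mathrm{adj}\Psi(z)=\mathrm{adj}\Psi(z)\Psi(z)=P(z)I_d$ together with the information on $P(z)=\det\Psi(z)$ supplied by Lemma \ref{polynome_non_inversible} and Assumption \ref{multiplicite_non_inversible}, namely $P(z)=\sum_{l\ge k}p_lz^l$ with $p_k\neq 0$.

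First I would establish \eqref{adj_polynomial}. Since each entry of $\mathrm{adj}\Psi(z)$ is a $(d-1)\times(d-1)$ minor of the degree-$2$ matrix polynomial $\Psi$, we may write $\mathrm{adj}\Psi(z)=\sum_{l=0}^{2d-2}\Psi_lz^l$. Applying \eqref{derivee_adj} at $u=0$ — here $\mathrm{rank}\Psi(0)=\mathrm{rank}A_{RL}=\mathrm{rank}A_{LR}=d-k$ by Assumption \ref{not_full_rank} (and $A_{RL}=A_{LR}^*$ since $A$ is Hermitian) — gives $l!\,\Psi_l=(\mathrm{adj}\Psi)^{(l)}(0)=0$ for $0\le l\le k-2$, so the bottom coefficients vanish. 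For the symmetry $\Psi_l^*=\Psi_{2(d-1)-l}$ I would start from the identity $\Psi(\bar z)^*=z^2\Psi(z^{-1})$ on $\setC^*$, which follows directly from $A_{RL}=A_{LR}^*$ and $A_{LL}+A_{RR}$ Hermitian; taking adjugates and using $\mathrm{adj}(cM)=c^{d-1}\mathrm{adj}M$ and $\mathrm{adj}(M^*)=(\mathrm{adj}M)^*$ yields $(\mathrm{adj}\Psi(\bar z))^*=z^{2(d-1)}\mathrm{adj}\Psi(z^{-1})$, and comparing coefficients of $z^l$ gives the symmetry. In particular the vanishing of $\Psi_0,\dots,\Psi_{k-2}$ forces $\Psi_{2d-k},\dots,\Psi_{2d-2}=0$, leaving precisely the range $\lbrace k-1,\dots,2d-k-1\rbrace$.

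Next I would read off the structure of $\Psi_{k-1}$ from $\Psi(z)\mathrm{adj}\Psi(z)=P(z)I_d$ and $\mathrm{adj}\Psi(z)\Psi(z)=P(z)I_d$. Comparing the coefficient of $z^{k-1}$ — the lowest order at which $P$ can be nonzero — and using $\Psi_{k-2}=\Psi_{k-3}=0$, one gets $A_{RL}\Psi_{k-1}=0$ and $\Psi_{k-1}A_{RL}=0$. The first says $\mathrm{Im}\Psi_{k-1}\subseteq\ker A_{RL}=\mathrm{Vect}(u_{A_{RL}}^{(1)},\dots,u_{A_{RL}}^{(k)})$; the second says $\ker\Psi_{k-1}\supseteq\mathrm{Im}A_{RL}=\mathrm{Im}A_{LR}^*=(\ker A_{LR})^{\perp}$, i.e. $\Psi_{k-1}x=\Psi_{k-1}\bigl(\mathrm{proj}_{\ker A_{LR}}x\bigr)$. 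Since $u_{A_{LR}}^{(i)}\in\ker A_{LR}$ we have $\langle u_{A_{LR}}^{(i)},x\rangle=\langle u_{A_{LR}}^{(i)},\mathrm{proj}_{\ker A_{LR}}x\rangle$, so the functionals $\langle u_{A_{LR}}^{(i)},\cdot\rangle$ restrict to a basis of $(\ker A_{LR})^{*}$; expanding $\mathrm{proj}_{\ker A_{LR}}x$ in the basis $(u_{A_{LR}}^{(i)})$ using these functionals as coordinates and then writing each $\Psi_{k-1}u_{A_{LR}}^{(i)}\in\ker A_{RL}$ in the basis $(u_{A_{RL}}^{(j)})$ produces \eqref{proj}.

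Finally, the non-vanishing: comparing the coefficient of $z^k$ in $\Psi(z)\mathrm{adj}\Psi(z)=P(z)I_d$ gives $A_{RL}\Psi_k+(A_{LL}+A_{RR})\Psi_{k-1}=p_kI_d$ with $p_k\neq 0$. If $\Psi_{k-1}=0$ then $A_{RL}\Psi_k=p_kI_d$, so $A_{RL}$ is surjective, contradicting $\mathrm{rank}A_{RL}=d-k<d$; hence $\Psi_{k-1}\neq 0$. If $\Psi_k=0$ then $(A_{LL}+A_{RR})\Psi_{k-1}=p_kI_d$, and since $A_{LL}+A_{RR}$ is positive definite (a sum of principal blocks of the positive definite $A$, hence invertible) this makes $\Psi_{k-1}$ invertible, so $A_{RL}\Psi_{k-1}=0$ forces $A_{RL}=0$, i.e. $k=d$; thus for $1\le k\le d-1$ we get $\Psi_k\neq 0$ (the remaining case $k=d$ is the trivial decoupled one $A_{LR}=0$). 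I expect the only genuinely delicate point to be the bookkeeping in \eqref{proj}, keeping straight the change of basis between the coordinate functionals on $\ker A_{LR}$ and the pairings $\langle u_{A_{LR}}^{(i)},\cdot\rangle$; everything else reduces to routine coefficient comparison.
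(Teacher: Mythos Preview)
Your argument is correct and follows essentially the same route as the paper: write $\mathrm{adj}\Psi$ as a matrix polynomial of degree $\le 2d-2$, use \eqref{derivee_adj} at $z=0$ to kill the low coefficients, use the functional equation $\Psi(\bar z)^*=z^2\Psi(z^{-1})$ to get the symmetry $\Psi_l^*=\Psi_{2(d-1)-l}$ (and hence kill the high coefficients), and then read off the structure of $\Psi_{k-1}$ and the non-vanishing from the identity $\Psi\cdot\mathrm{adj}\Psi=\mathrm{adj}\Psi\cdot\Psi=P\,I_d$. The only cosmetic difference is that the paper phrases the last step as evaluating the $(k-1)$-th and $k$-th derivatives of this identity at $0$, whereas you compare the $z^{k-1}$ and $z^k$ coefficients directly; these are of course equivalent. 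Your observation that the conclusion $\Psi_k\neq 0$ breaks down at $k=d$ (where in fact the index range collapses to $\{d-1\}$) is a point the paper does not address explicitly.
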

\begin{proof}
	First of all, we remark that there exist matrices $\Psi_l \in M_d(\setC)$ for $l\in\lbrace 0,\dots,2d-2\rbrace$ such that for all $z\in\setC$,
	$$\mathrm{adj}\Psi(z)=\sum_{l=0}^{2d-2}\Psi_lz^l.$$
	Moreover, we have $\Psi(z)=z^2\Psi(1/\overline{z})^*$ for all $z\in\setC^*$ so that,
	\begin{align*}
		\mathrm{adj}\Psi(z)=z^{2(d-1)}\mathrm{adj}\Psi(1/\overline{z})^*= \sum_{l=0}^{2d-2}\Psi_{2(d-1)-l}^*z^l.
	\end{align*}
	Then, $\Psi_l^*=\Psi_{2(d-1)-l}$ for all $l\in \lbrace k-1,\dots, 2d-k-1\rbrace$. Finally, $\mathrm{rank}(\Psi(0))=\mathrm{rank}(A_{RL})=d-k$ by assumption so we can use equation \eqref{derivee_adj} to verify that $\Psi_l=0$ for all $l\in\lbrace 0,\dots, k-2\rbrace$ and therefore also for all $l\in\lbrace 2d-k,\dots, 2d-2\rbrace$ thanks to what we just showed. Finally, recall that, for all $z\in\setC$,
	$$\Psi(z)\mathrm{adj}\Psi(z)=\mathrm{adj}\Psi(z)\Psi(z)=\det(\Psi(z))I_d$$
	for $I_d$ the $d$-dimensional identity matrix. The $(k-1)$-th derivative, using equation \eqref{derivee_adj} if $k\geq 2$, together with the assumption \ref{multiplicite_non_inversible}, gives for $z=0$,
	\begin{equation}
		A_{RL}\Psi_{k-1}=\Psi_{k-1}A_{RL}=0.
	\end{equation}
	We deduce \eqref{proj}.
	
	Taking the next derivative evaluated at $z=0$ also gives
	\begin{equation}\label{adj_id}
		A_{RL}\Psi_k+(A_{LL}+A_{RR})\Psi_{k-1}=(\det\Psi(0))^{(k)}I_d.
	\end{equation}
	We have $\mathrm{rank}(A_{RL}\Psi_k)\leq \mathrm{rank}(A_{RL})=d-k$ and, since $(A_{LL}+A_{RR})$ is invertible, $\mathrm{rank}((A_{LL}+A_{RR})\Psi_{k-1})=\mathrm{rank}(\Psi_{k-1})\leq k$ thanks to \eqref{proj}. Thanks to \eqref{adj_id}, we deduce that $\Psi_{k-1}$ and $\Psi_k$ are non zero. Moreover, we get that $\mathrm{Im}(\Psi_{k-1})=\ker A_{RL}$. 
\end{proof}
\begin{rmk} Denote $T:\setC^d\to\setC^d$, $v\mapsto (A_{LL}+A_{RR})v$. We deduce from \eqref{adj_id}, that
	\begin{equation}\label{direct_sum}
		\setC^d=T(\ker A_{RL})\oplus \mathrm{Im } A_{RL}.
	\end{equation}
\end{rmk} 
The following lemma gives asymptotic formulas for $\Phi^{-1}$ which will be relevant for the residue calculus in the proof of theorem \ref{residues_non_inversible}.
\begin{lemma}
	Under assumptions \ref{not_full_rank} and \ref{multiplicite_non_inversible} and notations introduced in lemma \ref{adj_non_inversible}, we have,
	\begin{equation}\label{limite_en_0_et_infini}
		\lim_{\vert z \vert \to 0}\Phi(z)^{-1}=\frac{\Psi_{k-1}}{p_{k-1}}\text{ and }
		\lim_{\vert z \vert \to\infty}\Phi(z)^{-1}=\frac{\Psi_{2d-k-1}}{p_{2d-k}}.
	\end{equation}
\end{lemma}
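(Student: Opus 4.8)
The plan is to rewrite $\Phi^{-1}$ entirely in terms of the matrix polynomial $\Psi(z)=z\,\Phi(z)$ and then extract its behaviour at $0$ and at $\infty$ from the two expansions already established: $P=\det\Psi$ from Lemma \ref{polynome_non_inversible} together with Assumption \ref{multiplicite_non_inversible}, and $\mathrm{adj}\,\Psi$ from Lemma \ref{adj_non_inversible}. First I would note that the limits are meaningful: since $\det\Phi(z)=z^{-d}\det\Psi(z)=z^{-d}P(z)$ and, by Assumption \ref{multiplicite_non_inversible}, $P$ has valuation exactly $k$ at $0$ and degree exactly $2d-k$, the polynomial $P$ does not vanish on a punctured neighbourhood of $0$ nor for $|z|$ large, hence $\Phi(z)$ is invertible there.

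Next I would use the scaling identities $\det(cM)=c^{d}\det M$ and $\mathrm{adj}(cM)=c^{d-1}\mathrm{adj}\,M$, valid for $M\in M_d(\setC)$ and $c\in\setC^*$ (the second is immediate from $\mathrm{adj}(cM)=\det(cM)\,(cM)^{-1}$ when $M$ is invertible, then by density of invertible matrices in general). Applying them with $c=z^{-1}$ and $M=\Psi(z)$, and using $\Phi(z)=z^{-1}\Psi(z)$, gives for $z\in\setC^*\setminus\mathcal{S}$
$$\Phi(z)^{-1}=\frac{\mathrm{adj}\,\Phi(z)}{\det\Phi(z)}=\frac{z^{1-d}\,\mathrm{adj}\,\Psi(z)}{z^{-d}P(z)}=\frac{z\,\mathrm{adj}\,\Psi(z)}{P(z)}.$$
Then I would substitute \eqref{adj_polynomial}, i.e. $\mathrm{adj}\,\Psi(z)=\sum_{l=k-1}^{2d-k-1}\Psi_l z^l$ with $\Psi_{k-1}$ and $\Psi_{2d-k-1}$ nonzero (Lemma \ref{adj_non_inversible}), together with $P(z)=\sum_{l=k}^{2d-k}p_l z^l$ with $p_k$ and $p_{2d-k}$ nonzero (Lemma \ref{polynome_non_inversible} and Assumption \ref{multiplicite_non_inversible}), so that
$$\Phi(z)^{-1}=\frac{\Psi_{k-1}z^{k}+\Psi_{k}z^{k+1}+\cdots+\Psi_{2d-k-1}z^{2d-k}}{p_{k}z^{k}+p_{k+1}z^{k+1}+\cdots+p_{2d-k}z^{2d-k}}.$$
Factoring $z^{k}$ out of numerator and denominator and letting $|z|\to 0$ leaves only the two constant terms, which gives the first limit; dividing instead by $z^{2d-k}$ and letting $|z|\to\infty$ leaves only the top terms $\Psi_{2d-k-1}z^{2d-k}$ and $p_{2d-k}z^{2d-k}$, which gives $\Phi(z)^{-1}\to\Psi_{2d-k-1}/p_{2d-k}$, i.e. \eqref{limite_en_0_et_infini}.

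There is no genuine obstacle here: all the substance is carried by the two cited lemmas — that $\mathrm{adj}\,\Psi$ is supported exactly in degrees $k-1,\dots,2d-k-1$ with nonvanishing endpoint coefficients, and that $P=\det\Psi$ has valuation exactly $k$ and degree exactly $2d-k$ — after which one is merely taking the limit of a matrix-valued rational function whose numerator and denominator share the same lowest (resp. highest) degree. The one point deserving care is the scaling exponent $d-1$ in $\mathrm{adj}(cM)=c^{d-1}\mathrm{adj}\,M$: it is precisely this exponent that makes the powers of $z$ collapse to the single clean factor $z\,\mathrm{adj}\,\Psi/P$, so it is worth verifying it explicitly before running the limit.
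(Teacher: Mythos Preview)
Your proof is correct and follows essentially the same approach as the paper: both express $\Phi(z)^{-1}=z\,\mathrm{adj}\,\Psi(z)/P(z)$ and read off the limits from the expansions of $\mathrm{adj}\,\Psi$ and $P$ supplied by Lemmas~\ref{polynome_non_inversible} and~\ref{adj_non_inversible}. Note that your computation (like the paper's own proof) actually yields $\Psi_{k-1}/p_{k}$ for the limit at $0$; the $p_{k-1}$ appearing in the displayed statement is a typo in the paper, since $p_{k-1}=0$ under Assumption~\ref{multiplicite_non_inversible}.
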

\begin{proof}
	We have, for all $z\in\setC^*$, 
	$$\Phi(z)^{-1}=\frac{z\mathrm{adj}(\Psi(z))}{\det(\Psi(z))}.$$
	By lemma \ref{polynome_non_inversible}, we can write $\det\Psi(z)=z^kQ(z)$ with $Q$ polynomial such that $Q(0)=p_{k}\neq 0$. Using equation \eqref{adj_polynomial}, we get the limit in $0$. Again thanks to lemma \ref{polynome_non_inversible}, we know that $\det \Psi(z) \sim z^{2d-k}p_{2d-k}$ as $\vert z \vert \to\infty$. We get the limit at infinity with \eqref{adj_polynomial}.
\end{proof}
We are now ready to prove the following theorem, which is the extension of theorem \ref{residues} to the case where $A_{LR}$ is no longer invertible. 
\begin{theorem}\label{residues_non_inversible} For all $w\in\mathcal{S}$, $\langle u_{1/\overline{w}}, \Phi'(w) u_w \rangle\neq 0$. Let for all $w\in\mathcal{S}$, $\alpha_w\in\setC^*$ and $P_w\in M_d(\setC)$ defined as
	\begin{equation*}
		\alpha_w=\frac{1}{\langle u_{1/\overline{w}}, \Phi'(w) u_w \rangle } \text{ and } P_wx=\langle u_{1/\overline{w}},x\rangle u_w
	\end{equation*}	
	for all $x\in\setC^d$. Under assumptions \ref{not_full_rank} and \ref{multiplicite_non_inversible}, for all $z\in\setC^* \setminus \mathcal{S}$, we have
	\begin{equation}
		\Phi(z)^{-1}=\sum_{w\in\mathcal{S}}\frac{\alpha_w}{z-w}P_w + \frac{1}{p_{2d-k}}\Psi_{2d-k+1}
	\end{equation}
	Moreover, $((u_w)_{w\in\mathcal{S}_{<1}},(u_i^{A_{RL}})_{i=1}^k)$ and $((u_w)_{w\in\mathcal{S}_{>1}},(u_i^{A_{LR}})_{i=1}^k)$ are bases of $\setC^d$.
\end{theorem}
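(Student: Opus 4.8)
The plan is to mimic the proof of Theorem \ref{residues} line by line; the only genuinely new feature is that, because $A_{LR}$ is no longer invertible, $\Phi^{-1}$ does not vanish at $0$ and $\infty$ but tends to the nonzero constants of \eqref{limite_en_0_et_infini}, which is why a constant term now survives.

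\emph{Local analysis at a zero.} Fix $w\in\mathcal{S}$. By assumption \ref{multiplicite_non_inversible}, $w$ is a simple root of $P$ and $\dim\ker\Phi(w)=1$, so the Taylor expansion of $\Phi(u)^{-1}=\mathrm{adj}\,\Phi(u)/\det\Phi(u)$ at $w$ is handled exactly as in Theorem \ref{residues}: one has $\det\Phi(u)=(\det\Phi(w))'(u-w)+o(u-w)$ with $(\det\Phi(w))'\neq 0$; writing $\mathrm{adj}\,\Phi(w)=\beta_w P_w$ (lemma \ref{rmksym}) and differentiating $\Phi(z)\,\mathrm{adj}\,\Phi(z)=\det\Phi(z)\,I_d$ at $w$, the same rank count forces $\beta_w\neq 0$, $\langle u_{1/\overline{w}},\Phi'(w)u_w\rangle\neq 0$ and $\beta_w=\alpha_w(\det\Phi(w))'$. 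Hence $\mathrm{Res}(\Phi^{-1},w)=\lim_{u\to w}(u-w)\Phi(u)^{-1}=\alpha_w P_w$, and these are the only singularities of $\Phi^{-1}$ on $\setC^*$.

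\emph{Global identity.} By \eqref{limite_en_0_et_infini}, $z=0$ is a removable singularity of $\Phi^{-1}$ and $\Phi(z)^{-1}\to\tfrac{1}{p_{2d-k}}\Psi_{2d-k-1}$ as $|z|\to\infty$. Therefore
$$F(z)=\Phi(z)^{-1}-\sum_{w\in\mathcal{S}}\frac{\alpha_w}{z-w}P_w$$
extends to an entire function (the simple poles on $\mathcal{S}$ are cancelled by the previous step, and $0$ is removable) with a finite limit at infinity; by Liouville it is the constant $\tfrac{1}{p_{2d-k}}\Psi_{2d-k-1}=\lim_{|z|\to\infty}\Phi(z)^{-1}$, which is the asserted decomposition.

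\emph{The completed families are bases.} The family $((u_w)_{w\in\mathcal{S}_{<1}},(u_i^{A_{RL}})_{i=1}^{k})$ has $(d-k)+k=d$ vectors, so it suffices to rule out a nonzero $x$ orthogonal to all of them. Pairing the decomposition just obtained with such an $x$ at $z=e^{\ii\theta}$ gives
$$\langle x,\Phi(e^{\ii\theta})^{-1}x\rangle=\sum_{w\in\mathcal{S}}\frac{\alpha_w}{e^{\ii\theta}-w}\langle u_{1/\overline{w}},x\rangle\langle x,u_w\rangle+\frac{1}{p_{2d-k}}\langle x,\Psi_{2d-k-1}x\rangle.$$
Every term of the sum vanishes: if $w\in\mathcal{S}_{<1}$ then $x\perp u_w$, and if $w\in\mathcal{S}_{>1}$ then $1/\overline{w}\in\mathcal{S}_{<1}$, so $x\perp u_{1/\overline{w}}$. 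For the constant term, lemma \ref{adj_non_inversible} gives $\mathrm{Im}\,\Psi_{k-1}=\ker A_{RL}$ with $\mathrm{rank}\,\Psi_{k-1}=k$ and $\Psi_{2d-k-1}=\Psi_{k-1}^{*}$, hence $\ker\Psi_{2d-k-1}=(\mathrm{Im}\,\Psi_{k-1})^{\perp}=(\ker A_{RL})^{\perp}\ni x$, so $\Psi_{2d-k-1}x=0$. Thus $\langle x,\Phi(e^{\ii\theta})^{-1}x\rangle=0$ for all $\theta$, contradicting the positive-definiteness of $\Phi(e^{\ii\theta})^{-1}$ from lemma \ref{Invertible}. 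So the first family is a basis; the second, $((u_w)_{w\in\mathcal{S}_{>1}},(u_i^{A_{LR}})_{i=1}^{k})$, follows by applying this to $SAS^{-1}$ (which swaps $A_{LR}$ and $A_{RL}$) and invoking lemma \ref{leftright}. The only point that is not a transcription of Theorem \ref{residues} is precisely the vanishing of this constant-term contribution, which is exactly why lemma \ref{adj_non_inversible} (the identities $\mathrm{Im}\,\Psi_{k-1}=\ker A_{RL}$, $\mathrm{rank}\,\Psi_{k-1}=k$ and $\Psi_l^{*}=\Psi_{2(d-1)-l}$) was established beforehand; I expect this to be the main technical hinge of the argument.
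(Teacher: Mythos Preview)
Your proof is correct and follows essentially the same strategy as the paper's: the local residue computation at each $w\in\mathcal{S}$ is taken verbatim from Theorem~\ref{residues}, and the basis argument again proceeds by contradiction via positive-definiteness of $\Phi(e^{\ii\theta})^{-1}$. Two minor differences are worth noting. For the global identity you invoke Liouville on $F(z)=\Phi(z)^{-1}-\sum_{w}\frac{\alpha_w}{z-w}P_w$, whereas the paper evaluates $\int_{\mathcal{C}_R}\Phi(u)^{-1}(u-z)^{-1}\,du$ by residues inside and at infinity; the two arguments are equivalent and use the same inputs \eqref{limite_en_0_et_infini}, but your formulation is slightly more streamlined. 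For the basis claim, the paper merely writes ``just as for theorem~\ref{residues}'' and leaves the handling of the new constant term implicit; your argument makes this explicit by observing $\Psi_{2d-k-1}=\Psi_{k-1}^{*}$ and $\ker\Psi_{k-1}^{*}=(\mathrm{Im}\,\Psi_{k-1})^{\perp}=(\ker A_{RL})^{\perp}$, which is indeed the only genuinely new step and exactly why Lemma~\ref{adj_non_inversible} was prepared. Your symmetry reduction via $SAS^{-1}$ for the second basis is also fine and parallels the paper's treatment elsewhere.
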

\begin{proof}
	The proof is similar to the one of theorem \ref{residues}. We let $\mathcal{C}_R$ be the circle centered at the origin of radius $R$ sufficiently large so that $\mathcal{S}\subset \mathbb{D}(0,R)$. For any $z\in\setC^*\setminus \mathcal{S}$, we have
	\begin{align*}
		\int_{\mathcal{C}_R}\Phi(u)^{-1}\frac{1}{u-z}du &=\pi\ii\left(\Phi(z)^{-1}+\sum_{w\in\mathcal{S}}\mathrm{Res}(\Phi^{-1},w)\frac{1}{w-z}\right)
	\end{align*}
	where we used limit in $0$ previously computed. Moreover, using the limit of $\Phi(u)^{-1}$ when $\vert u \vert\to\infty$ we get, 
	$$\int_{\mathcal{C}_R}\Phi(u)^{-1}\frac{1}{u-z}du=\mathrm{Res}\left(\Phi^{-1}\frac{1}{\bullet-z},\infty\right)=\frac{1}{p_{2d-k}}\Psi_{2d-k-1}.$$
	We deduce,
	$$\Phi(z)^{-1}=\sum_{w\in\mathcal{S}}\mathrm{Res}(\Phi^{-1},w)\frac{1}{z-w}+ \frac{1}{p_{2d-k}}\Psi_{2d-k-1}.$$
	To get \eqref{eq1}, we only have to show that, for all $w\in\mathcal{S}$,
	$$\mathrm{Res}(\Phi^{-1},w)=\lim_{u\to w}(u-w)\Phi(u)^{-1}=\alpha_wP_w.$$
	This has already been done in proof of theorem \ref{residues}. Finally, we recall from lemma \ref{adj_non_inversible} that for all $x\in\setC^d$,
	$$\Psi_{2d-k-1}x=\Psi_{k-1}^*x=\Psi_{k-1}x=\sum_{i,j=1}^{k}\overline{\alpha_{i,j}}\langle u_{A_{RL}}^{(i)},x\rangle u_{A_{LR}}^{(j)}.$$
	Now, just as for theorem \ref{residues}, we prove that $((u_w)_{w\in\mathcal{S}_{<1}},(u_i^{A_{RL}})_{i=1}^k)$  and $((u_w)_{w\in\mathcal{S}_{>1}},(u_i^{A_{LR}})_{i=1}^k)$ are bases of $\setC^d$. 
\end{proof}
\begin{rmk}
	As in corollary \ref{expliFC}, a direct computations shows that 
	\begin{equation}
		C_0=\sum_{w\in\mathcal{S}_{<1}}w^{-1}\alpha_wP_w+\frac{1}{p_{2d-k}}\Psi_{2d-k+1}
	\end{equation}
	and for all $k>0$,
	\begin{equation}
		C_{k}=\sum_{w\in\mathcal{S}_{>1}}-\frac{\alpha_w}{w^{k+1}} P_w\text{ and }C_{-k}=\sum_{w\in\mathcal{S}_{<1}}w^{k-1}\alpha_wP_w.
	\end{equation}
	which are now  sub-blocks matrices.
\end{rmk}

\subsection{Invariant boundaries}
We are now ready to give the following theorem which give left and right Schur-invariant boundaries. 
\begin{theorem}\label{main_not_full_rank}
	Under assumptions \ref{not_full_rank} and \ref{multiplicite_non_inversible}, let $W_{<1}\in M_d(\setC)$ such that $W_{<1}u_w=wu_w$ for all $w\in\mathcal{S}_{<1}$ and $W_{<1}u_i^{A_{RL}}=0$ for all $i\in\lbrace 1,\dots,k\rbrace$. Then, 
	\begin{equation}
		B_R=A_{LL}+A_{LR}W_{<1}\in H_d^+(\setC).
	\end{equation}
	is right Schur-invariant for $A$. Similarly, with an abuse of notation, let $W_{>1}^{-1}\in M_d(\setC)$ be the (non invertible) matrix such that $W_{>1}^{-1}u_w=(1/w)u_w$ for all $w\in\mathcal{S}_{>1}$ and $W_{>1}^{-1}u_i^{A_{LR}}=0$ for all $i\in\lbrace 1,\dots,k\rbrace$. Then,
	\begin{equation}
		B_L=A_{RR}+A_{RL}W_{>1}^{-1}
	\end{equation}
	is left Schur-invariant for $A$. 	
\end{theorem}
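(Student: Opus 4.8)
The plan is to mirror the proof of Theorem \ref{main2}, the only genuinely new ingredient being that \eqref{phimatrix} must be replaced by a \emph{polynomial} identity, since $W_{<1}$ is now singular. First I would record the basic algebraic fact that
\[
A_{RL}+(A_{LL}+A_{RR})W_{<1}+A_{LR}W_{<1}^{2}=0 .
\]
Indeed, by Theorem \ref{residues_non_inversible} the family $((u_w)_{w\in\mathcal{S}_{<1}},(u_i^{A_{RL}})_{i=1}^{k})$ is a basis of $\setC^{d}$, and the left-hand side annihilates each of its vectors: applied to $u_w$ it equals $w\,\Phi_A(w)u_w=0$, and applied to $u_i^{A_{RL}}$ it equals $A_{RL}u_i^{A_{RL}}=0$ since $W_{<1}u_i^{A_{RL}}=0$. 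The analogous identity for $W_{>1}^{-1}$, namely $A_{LR}+(A_{LL}+A_{RR})W_{>1}^{-1}+A_{RL}(W_{>1}^{-1})^{2}=0$, holds for the same reason using the basis $((u_w)_{w\in\mathcal{S}_{>1}},(u_i^{A_{LR}})_{i=1}^{k})$.

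Next I would prove $B_R=A_{LL}+A_{LR}W_{<1}\in H_d^+(\setC)$ by transcribing Lemma \ref{positivedef}. Decomposing $x=\sum_\alpha x_\alpha v_\alpha$ in the above basis, with $W_{<1}v_\alpha=\lambda_\alpha v_\alpha$ where $\lambda_\alpha\in\mathcal{S}_{<1}$ or $\lambda_\alpha=0$, the identity behind \eqref{sym} survives pair by pair: for all basis vectors $v_\alpha,v_\beta$ one still has
\[
\frac{1}{1-\overline{\lambda_\alpha}\lambda_\beta}\begin{pmatrix}v_\alpha\\ \lambda_\alpha v_\alpha\end{pmatrix}^{*}A\begin{pmatrix}v_\beta\\ \lambda_\beta v_\beta\end{pmatrix}=v_\alpha^{*}\!\left(A_{LL}+\lambda_\beta A_{LR}\right)v_\beta ,
\]
where the denominator is never $0$ (as $|\lambda_\alpha\lambda_\beta|<1$) and equals $1$ as soon as one index is a kernel direction; the verification uses \eqref{symsym} when both indices lie in $\mathcal{S}_{<1}$, and $A_{RL}v_\beta=0$ resp. $\Phi_A(w)u_w=0$ in the mixed cases. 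Expanding $\tfrac1{1-\overline{\lambda_\alpha}\lambda_\beta}=\sum_{m\ge0}\overline{\lambda_\alpha^{m}}\lambda_\beta^{m}$ and collecting terms yields
\[
x^{*}B_Rx=\sum_{m\ge0}\begin{pmatrix}W_{<1}^{m}x\\ W_{<1}^{m+1}x\end{pmatrix}^{*}A\begin{pmatrix}W_{<1}^{m}x\\ W_{<1}^{m+1}x\end{pmatrix},
\]
which is real, so $B_R$ is Hermitian, and strictly positive for $x\neq0$ by its $m=0$ term together with positive definiteness of $A$. In particular $A_{RR}+B_R$ is Hermitian positive definite, hence invertible.

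Then I would derive right Schur-invariance \eqref{rightinv}. Hermiticity of $B_R$ forces $A_{LR}W_{<1}=(A_{LR}W_{<1})^{*}=W_{<1}^{*}A_{RL}$. Substituting $A_{LR}W_{<1}^{2}=(A_{LR}W_{<1})W_{<1}=W_{<1}^{*}A_{RL}W_{<1}$ into the polynomial identity above and taking the adjoint gives
\[
A_{LR}+W_{<1}^{*}\!\left(A_{LL}+A_{RR}+A_{LR}W_{<1}\right)=0,\qquad\text{i.e.}\qquad A_{LR}=-W_{<1}^{*}(A_{RR}+B_R).
\]
Multiplying on the right by $(A_{RR}+B_R)^{-1}A_{RL}$ and using $W_{<1}^{*}A_{RL}=A_{LR}W_{<1}$ gives $A_{LR}(A_{RR}+B_R)^{-1}A_{RL}=-A_{LR}W_{<1}$, so that $A_{LL}-A_{LR}(A_{RR}+B_R)^{-1}A_{RL}=A_{LL}+A_{LR}W_{<1}=B_R$, which is \eqref{rightinv}. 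The statement for $B_L$ then follows either by the strictly parallel computation starting from the $W_{>1}^{-1}$ identity of the first paragraph, or, more economically, from the $B_R$ result applied to $SAS^{-1}$ exactly as Theorem \ref{main} was obtained from Theorem \ref{main2}: using $\Phi_{SAS^{-1}}(z)=\Phi_A(1/z)$, the identifications $(SAS^{-1})_{LL}=A_{RR}$, $(SAS^{-1})_{RL}=A_{LR}$, and Lemma \ref{leftright}, one checks that the zeros of $\Phi_{SAS^{-1}}$ inside the unit disk are precisely the $1/w$ with $w\in\mathcal{S}_{>1}(\Phi_A)$, with kernels still spanned by $u_w$, so that the matrix playing the role of $W_{<1}$ for $SAS^{-1}$ is exactly the $W_{>1}^{-1}$ of the statement and $B_R(SAS^{-1})=A_{RR}+A_{RL}W_{>1}^{-1}=B_L$. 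I expect the main obstacle to be purely bookkeeping: handling the singularity of $W_{<1}$ (replacing \eqref{phimatrix} by the polynomial identity, and threading the directions $\ker A_{RL}$, $\ker A_{LR}$ through the bases of Theorem \ref{residues_non_inversible}); the positive-definiteness step is otherwise a direct copy of Lemma \ref{positivedef}.
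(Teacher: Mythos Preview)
Your proof is correct. The positive-definiteness step and the reduction of $B_L$ to $B_R$ via $SAS^{-1}$ match the paper's approach exactly. The verification of the Schur-invariance identity \eqref{rightinv}, however, proceeds along genuinely different lines.

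The paper argues pointwise on the basis $((u_w)_{w\in\mathcal{S}_{<1}},(u_i^{A_{RL}})_i)$: on $\ker A_{RL}$ both sides of \eqref{rightinv} reduce to $A_{LL}$, while on each $u_w$ one uses $(A_{RR}+B_R)u_w=-w^{-1}A_{RL}u_w$ and then the direct-sum decomposition $\setC^d=T(\ker A_{RL})\oplus\mathrm{Im}\,A_{RL}$ of \eqref{direct_sum} to identify $(A_{RR}+B_R)^{-1}$ on $\mathrm{Im}\,A_{RL}$. Your argument instead extracts from the Hermiticity of $B_R$ the relation $A_{LR}W_{<1}=W_{<1}^{*}A_{RL}$, feeds it into the polynomial identity $A_{RL}+(A_{LL}+A_{RR})W_{<1}+A_{LR}W_{<1}^{2}=0$, and after taking the adjoint obtains the clean matrix equation $A_{LR}=-W_{<1}^{*}(A_{RR}+B_R)$, from which \eqref{rightinv} follows in one line. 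This is a nice observation: it sidesteps the structural lemma \eqref{direct_sum} entirely and gives a coordinate-free derivation, at the cost of relying on the Hermiticity of $B_R$ (which you have already established) rather than on a purely basis-wise check. Both routes are short; yours is arguably the more algebraic of the two.
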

\begin{proof}
	Similarly to lemma \ref{positivedef}, we prove that $B_R\in H_d^+(\setC)$ and $(A_{RR} +B_R)$ is invertible. We now have to prove that $B_R$ is right Schur-invariant, that is
	$$B_R=A_{LL}-A_{LR}(A_{RR}+B_R)^{-1}A_{RL},$$
	see \eqref{rightinv}. This equality is clear for any $v\in\ker(A_{RL})$. For any $w\in\mathcal{S}_{<1}$, we have
	\begin{equation}\label{above}
		(A_{RR}+A_{LL}+A_{LR}W_{<1})u_w=-w^{-1}A_{RL}u_w
	\end{equation}
	by definition of $u_w$. Denote $f:\setC^d\to\setC^d$, $v\mapsto (A_{RR}+B_R)v$. Then, using decomposition \eqref{direct_sum} together with \eqref{above}, we get
	\begin{align*}
		f:\mathrm{Span}((u_w)_{w\in\mathcal{S}_{<1}}) \oplus \ker A_{RL}&\to \mathrm{Im}(A_{RL}) \oplus T(\ker A_{RL})\\
		\sum_{w\in\mathcal{S}_{<1}}x(w)u_w+\sum_{i=1}^kx(i)u_{i}^{A_{RL}}&\mapsto \sum_{w\in\mathcal{S}_{<1}}-w^{-1}x(w)A_{RL}u_w + \sum_{i=1}^k x(i)Tu_{i}^{A_{RL}}.
	\end{align*}
	
	Now, the restriction of $f$ on $\mathrm{Span}((u_w)_{w\in\mathcal{S}_{<1}})$, is clearly invertible with inverse given by $A_{RL}u_w\mapsto -wu_w$ for all $w\in\mathcal{S}_{<1}$. We conclude that for any $w\in\mathcal{S}_{<1}$, 
	$$(A_{LL}-A_{LR}(A_{RR}+B_R)^{-1}A_{RL})u_w=(A_{LL}+A_{LR}W_{<1})u_w=B_R$$
	and $B_R$ is right Schur-invariant. The proof is similar for $B_L$. 
\end{proof}
\begin{rmk}
	We remark that this result is similar to the one obtained under assumption \ref{full_rank}, see \ref{main} and \ref{main2}. Nevertheless, we remark that the matrices $W_{<1}$ and $W_{>1}^{-1}$ are now $0$ on $\ker A_{RL}$ and $\ker A_{LR}$ respectively. We can prove that any other result obtained in sections \ref{eigen-bound} and \ref{lawid} is still true.
\end{rmk}
\section{Application: probabilistic representation of Szegő limit theorem}\label{Szego}
In this section, after recalling the Szegő's limit theorem, we apply theorems \ref{main} and \ref{main2} to get an "eigen" version of this theorem in a simple case. In particular, we will see that there is no limit to take anymore. 
\subsection{Szegő limit theorem for block Toeplitz matrices}
We briefly recall Szegő Limit theorem. Let $\Psi:S^1\to M_d(\setC)$ be a continuous function and denote $(C_{k}(\Psi))_{k\in\setZ}$ its Fourier coefficients, see \eqref{fc}. We define the block-Toeplitz operator $T(\Psi)$ generated by $\Psi$ as the semi-infinite matrix with blocks
$$(T(\Psi))_{k,l}=C_{k-l}(\Psi)$$
for $1\leq k,l <\infty$. For all $P\geq 1$, write $T_P(\Psi)$ the truncation of $T(\Psi)$ at order $P$, that is, 
$$(T_P(\Psi))_{k,l}=C_{k-l}(\Psi)$$
for $1\leq k,l \leq P+1$.
Under some assumptions for $\Psi$, Szegő's limit theorem states
\begin{equation}\label{szeg}
	\det T_P(\Psi)\sim_{P\to\infty}\exp\left(\frac{1}{2\pi}\int_{0}^{2\pi}\log\det \Psi(e^{\ii\theta})d\theta\right)^{P+1}\kappa(\Psi).
\end{equation}
with $\kappa(\Psi)=\det T(\Psi)T(\Psi^{-1})$ for $T(\Psi)$ (resp. $T(\Psi^{-1})$) the Toeplitz operator associated to $\Psi$ (resp. $\Psi^{-1}$). 

This theorem, originally for scalar Toeplitz matrices, is due to Gabor Szegő in \cite{szego1915grenzwertsatz}. Elegant proofs for block Toeplitz using operator theory can be found in \cite{widom1974asymptotic} and, more generally for Toeplitz operator theory, we advise the reader to refer to \cite{bottcher2013analysis}.
\begin{rmk}
	Let $\Psi$ be a trigonometric polynomial of order $N\in\setN$. Take $P\geq N+1$. Up to a little modification of $T_P(\Psi)$ making it block-circulant we recover a similar asymptotic behavior of its determinant using discrete Fourier transform like for Gaussian Markov Processes. 
\end{rmk}
\begin{rmk}
	In the specific case where $\Psi$ is a trigonometric polynomial of order $1$ with values in $H_d^+(\setC)$, a direct computation of $\kappa(\Psi)$ gives,
	\begin{equation}\label{partfuncszego}
		\det T_P(\Psi)\sim_{P\to\infty}\exp\left(\frac{1}{2\pi}\int_{0}^{2\pi}\log\det \Psi(e^{\ii\theta})d\theta\right)^{P}\det(C_0(\Psi^{-1}))^{-1}.
	\end{equation}
	This expression is reminiscent of the free energy computed for periodic Gaussian Markov processes, see \eqref{free_energy_per}. We are therefore interested in an invariant version of this theorem. This is the purpose of the following subsection. 
\end{rmk}

\subsection{"Eigen" version of Szegő limit theorem for matrix valued trigonometric polynomials}
We now give an "eigen" version of Szegő limit. Inspired by Gaussian Markov processes, we introduce the following Toeplitz matrices modified at the boundary. 
\begin{definition}[Toeplitz matrix with boundary conditions]
	Let $G_L$ and $G_R\in M_d(\setC)$. The Toeplitz matrix with boundary conditions $G_L$ and $G_R$ generated by $\Psi$ is the following quasi-Toeplitz matrix
	$$(T_P^{,G_L,G_R}(\Psi))_{k,l}=\begin{cases}
		G_L+C_{0}(\Psi) \text{ for $k=l=1$}\\
		G_R+C_{0}(\Psi) \text{ for $k=l=P+1$}\\
		(T_P(\Psi))_{k,l} \text{ for $(k,l)\notin \set{(1,1), (P+1,P+1)}$}\end{cases}.$$
\end{definition}
\paragraph{Trigonometric polynomials of order $1$.} Let  $\Psi(e^{\ii\theta})=\Psi_{0}+\Psi_{1}e^{\ii\theta}+\Psi_{-1} e^{-\ii \theta}\in H_d^+(\setC)$ for all $\theta\in[0,2\pi]$, any trigonometric polynomial of order $1$. The function $\Psi$ can be treated as the function $\Phi$, in section \ref{phi}. Especially, if $\Psi$ satisfies \ref{multiplicite}, we can define, similarly theorems \ref{main} and \ref{main2}, two matrices $W_{<1}(\Psi)$ and $W_{>1}(\Psi)\in M_d(\setC)$ which are diagonal in the bases indexed by zeros of $\Psi$. Similarly to \eqref{leftinv} and \eqref{rightinv}, these matrices provide invariant boundaries for $\Psi$. For instance,
\begin{equation}\label{inv_psi}
	\Psi_1W_{<1}(\Psi)=-\Psi_1(\Psi_0+\Psi_1W_{<1}(\Psi))^{-1}\Psi_{-1}.
\end{equation}
With this good choice of boundary conditions, we get the following result. 
\begin{theorem}\label{eigen_szego}
	Let $\Psi(e^{\ii\theta})=\Psi_{0}+\Psi_{1}e^{\ii\theta}+\Psi_{-1} e^{-\ii \theta}\in H_d^+(\setC)$ for all $\theta\in[0,2\pi]$ such that $\Psi_1$ is invertible. Suppose that $\Psi$ satisfies assumptions  \ref{multiplicite}. Then, taking $G_L=\Psi_{-1}(W_{>1}(\Psi))^{-1}$ and $G_R=\Psi_1 W_{<1}(\Psi)$ for $W_{>1}(\Psi)$ and $W_{<1}(\Psi)$ as discussed above, for all $P\in\setN^*$,
	\begin{equation}\label{modified_szego}
		\det T_{P}^{G_L,G_R}=\exp\left(\frac{1}{2\pi}\int_{0}^{2\pi}\log\det \Psi(e^{\ii\theta})d\theta\right)^P\kappa(G_L,G_R)
	\end{equation}
	with $\kappa(G_L,G_R)=\det(G_L+G_R+\Psi_0)^{-1}$.
\end{theorem}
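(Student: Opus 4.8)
The plan is to recognise $T_P^{G_L,G_R}(\Psi)$ as the coupling matrix $Q^{(P)}_{(A,B_L,B_R)}$ of a homogeneous Gaussian Markov process of size $P$ under eigen-boundary conditions, and then to read off its determinant from Theorem \ref{propre} and the algebra of Section \ref{eigen-bound}. First I would realise $\Psi$ as some $\Phi_A$: since $\Psi$ is $H_d^+(\setC)$-valued on $S^1$, a trigonometric polynomial of order $1$ with $\Psi_1$ invertible, one takes $A_{LL}=A_{RR}=\tfrac12\Psi_0$ and $\{A_{LR},A_{RL}\}=\{\Psi_1,\Psi_{-1}\}$, choosing the orientation (that is, $\Psi$ versus $z\mapsto\Psi(1/z)$) so that the sub- and super-diagonals of $Q^{(P)}_{(A,B_L,B_R)}$ in \eqref{matrice_couplage} agree with those of $T_P^{G_L,G_R}(\Psi)$ up to a reversal of the block order, which leaves the determinant unchanged and does not affect $\frac1{2\pi}\int_0^{2\pi}\log\det$ on $S^1$. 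Under assumption \ref{multiplicite} for $\Psi$ the whole content of Section \ref{phi} applies to $\Psi$, and relation \eqref{inv_psi} together with its $W_{>1}$-counterpart say precisely that $B_L:=A_{LL}+G_L$ and $B_R:=A_{RR}+G_R$ are respectively left- and right-Schur-invariant for $A$ --- equivalently, $W_{<1}(\Psi)$ and $W_{>1}(\Psi)$ are the matrices of Theorems \ref{main} and \ref{main2}. Comparing the two matrices block by block --- the interior rows of $T_P^{G_L,G_R}(\Psi)$ carry $\Psi_{\pm1}$ and $\Psi_0=A_{LL}+A_{RR}$, and the two corners carry $G_\bullet+\Psi_0=B_\bullet+A_{\bullet\bullet}$ --- one obtains $T_P^{G_L,G_R}(\Psi)=Q^{(P)}_{(A,B_L,B_R)}$.

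Next I would evaluate $\det Q^{(P)}_{(A,B_L,B_R)}$. The most economical route is algebraic: marginalising the leftmost block variable of the associated Gaussian amounts to taking the Schur complement of the $(1,1)$-block $B_L+A_{LL}$ in $Q^{(P)}_{(A,B_L,B_R)}$, and the invariance identity \eqref{leftinv} shows that this complement is again $Q^{(P-1)}_{(A,B_L,B_R)}$, its new top-left block being $A_{LL}+A_{RR}-A_{RL}(B_L+A_{LL})^{-1}A_{LR}=A_{LL}+B_L$; iterating down to the one-block matrix $Q^{(0)}=B_L+B_R$ yields
\[
\det Q^{(P)}_{(A,B_L,B_R)}=\det(B_L+A_{LL})^{P}\,\det(B_L+B_R).
\]
(Equivalently one equates the two expressions for the partition function $Z_P$ in Definition \ref{Gauss_Markov_Proc} and Theorem \ref{propre} and cancels the $\alpha,\beta_L,\beta_R,(2\pi)$ factors.) By \eqref{lambda}, Theorem \ref{propre} and the free-energy computation \eqref{val_propre_residues} (see also \eqref{egalite_energies_libres} and \eqref{free_energy_eigen}) one gets $\det(B_L+A_{LL})=\exp\!\big(\tfrac1{2\pi}\int_0^{2\pi}\log\det\Psi(e^{\ii\theta})\,d\theta\big)$, and by \eqref{0dimweight} one gets $B_L+B_R=C_0(\Psi^{-1})^{-1}$.

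Finally I would identify the constant. Writing $C_k:=C_k(\Psi^{-1})$ and using the analogue of Lemma \ref{FCtoW}, namely $C_1C_0^{-1}=W_{>1}(\Psi)^{-1}$ and $C_{-1}C_0^{-1}=W_{<1}(\Psi)$, we have $G_L=\Psi_{-1}C_1C_0^{-1}$ and $G_R=\Psi_1C_{-1}C_0^{-1}$, hence
\[
(G_L+G_R+\Psi_0)\,C_0=\Psi_{-1}C_1+\Psi_1C_{-1}+\Psi_0C_0=I_d
\]
by the $k=0$ instance of the recursion \eqref{rec} (in its transposed form). Therefore $G_L+G_R+\Psi_0=C_0(\Psi^{-1})^{-1}=B_L+B_R$, so $\det(B_L+B_R)=\det(G_L+G_R+\Psi_0)$ is exactly the constant $\kappa(G_L,G_R)$ of \eqref{modified_szego}, and assembling the three displays gives \eqref{modified_szego}; in particular no thermodynamic limit intervenes, which is the point of the statement. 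The one genuinely delicate step is the block-by-block identification in the first paragraph: one must fix the $L/R$ orientation and the $z\leftrightarrow1/z$ choice so that $T_P^{G_L,G_R}(\Psi)$ is \emph{exactly} $Q^{(P)}_{(A,B_L,B_R)}$ rather than a block-reversed copy of it, after which the rest is a direct appeal to results already proved above. Note that one does not even need $A\in H_{2d}^+(\setC)$: the identity $\det Q^{(P)}=\det(B_L+A_{LL})^{P}\det(B_L+B_R)$ is a formal consequence of Schur-invariance alone.
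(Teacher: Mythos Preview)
Your argument is correct and follows essentially the same route as the paper: iterate Schur complements using the invariance of $G_L$ (or $G_R$), which reproduces $T_{P-1}^{G_L,G_R}$ at each step, and then identify $\det(G_L+\Psi_0)$ with $\exp\big(\tfrac{1}{2\pi}\int_0^{2\pi}\log\det\Psi\big)$ via \eqref{val_propre_residues}/\eqref{egalite_energies_libres}. The only difference is cosmetic: you route the computation through an auxiliary $A$ with $\Phi_A=\Psi$ and the matrix $Q^{(P)}_{(A,B_L,B_R)}$, whereas the paper applies the Schur recursion directly to $T_P^{G_L,G_R}(\Psi)$ using \eqref{inv_psi}; this spares you the orientation bookkeeping and the worry about whether your $A$ actually lies in $H_{2d}^+(\setC)$ (which, as you note, is irrelevant for the determinant identity but is needed if you want to quote Theorems~\ref{main}--\ref{main2} verbatim).
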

\begin{proof}
	The result follows from successive Schur complements formulas and invariant properties of $G_L$ and $G_R$, see for instance \eqref{inv_psi}. We conclude using \ref{egalite_energies_libres}.
\end{proof}
\begin{rmk} Thanks to \eqref{0dimweight}, $\kappa(G_L,G_R)=\det(C_0(\Psi^{-1}))^{-1}$ and we obviously recover \eqref{partfuncszego} computed with Szegő's limit theorem. Compared to Szegő's limit theorem (or to the block-circulant approach), there is no limit to take anymore. Nevertheless, this result only works when $\Psi\in H_d^+(\setC)$, which is the due to the Gaussian framework in which we work, see \eqref{herm_pos_def}.
\end{rmk}

\paragraph{Geometric interpretation of Szegő's limit theorem.} From a statistical physics point of view, in Szegő's limit theorem, the term $$\log\left(\exp\left(\frac{1}{2\pi}\int_{0}^{2\pi}\log\det \Psi(e^{\ii\theta})d\theta\right)/\kappa(\Psi)\right)$$ is the $0$-dimensional free energy attached to a vertex under Gibbs measure of a Gaussian Markov process on $\setZ$.

\paragraph{Trigonometric polynomials of order $N\geq2$.} When $\Psi$ is a trigonometric polynomial of order $N\geq 2$, it is still possible to modify the matrix at the boundary to obtain an "eigen" version of  Szegő's limit theorem. This is the purpose of the following result.
\begin{theorem}\label{szego_polynomial}
	Let $N\geq 2$ and $\Psi(e^{\ii\theta})=\sum_{k=-N}^N\Psi_ke^{\ii k \theta}\in H_d^+(\setC)$ for all $\theta\in[0,2\pi]$. We introduce the function $\Tilde{\Psi}(e^{\ii\theta})=\Tilde{\Psi_0}+\Tilde{\Psi}_1e^{\ii\theta} +\Tilde{\Psi}_{-1}e^{-\ii\theta}$ assumed to be in $\in H_{Nd}^+(\setC)$ with $\Tilde{\Psi_0}=T_{N-1}(\Psi)$, $\Tilde{\Psi}_1=(C_{k-l-N}(\Psi))_{k,l\in\set{0,\dots,N-1}}$ and $\Tilde{\Psi}_{-1}=\Tilde{\Psi}_{1}^*$. Suppose that  $\Tilde{\Psi}_1$ is invertible and that $\Tilde{\Psi}$ satisfies assumption \ref{multiplicite}. Then, taking $\Tilde{G}_L=\Psi_{-1}(W_{>1}(\Psi))^{-1}$ and $\Tilde{G}_R=\Psi_1 W_{<1}(\Psi)$ for $W_{>1}(\Tilde{\Psi})$ and $W_{<1}(\Tilde{\Psi})$ as discussed above, for all $P\in\setN^*$,
	\begin{equation}\label{modified_szego_2}
		\det T_{(N-1)P}^{\Tilde{G}_L,\Tilde{G}_R}=\exp\left(\frac{1}{2\pi}\int_{0}^{2\pi}\log\det \Tilde{\Psi}(e^{\ii\theta})d\theta\right)^P\Tilde{\kappa}(\Tilde{G}_L,\Tilde{G}_R)
	\end{equation}
	with $\Tilde{\kappa}(\Tilde{G}_L,\Tilde{G}_R)=\det(\Tilde{G}_L+\Tilde{G}_R+\Tilde{\Psi}_0)^{-1}$.
\end{theorem}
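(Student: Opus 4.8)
The plan is to reduce Theorem \ref{szego_polynomial} to the already-established order-$1$ case, Theorem \ref{eigen_szego}, via a blocking (or ``coarse-graining'') argument. The key observation is that the trigonometric polynomial $\Psi$ of order $N$, when one groups together blocks of $N-1$ consecutive indices, becomes a trigonometric polynomial $\Tilde\Psi$ of order $1$ with values in $M_{(N-1)d}(\setC)$; this is exactly the content of the definitions of $\Tilde\Psi_0=T_{N-1}(\Psi)$, $\Tilde\Psi_1$, $\Tilde\Psi_{-1}$ given in the statement. First I would check carefully that this blocking is compatible with truncation: namely that $T_{(N-1)P}(\Psi)$, viewed with $(N-1)d\times(N-1)d$ blocks, is precisely $T_P(\Tilde\Psi)$. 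This is a bookkeeping lemma about reindexing Fourier coefficients: $(T_{(N-1)P}(\Psi))_{k,l}=C_{k-l}(\Psi)$, and collecting the indices $k,l$ into blocks of size $N-1$ produces, in block position $(i,j)$, the $(N-1)d$-dimensional matrix whose entries are $C_{(i-j)(N-1)+r-s}(\Psi)$ for $r,s\in\{0,\dots,N-2\}$, which by construction equals $C_{i-j}(\Tilde\Psi)$ where $\Tilde\Psi$ is the order-$1$ polynomial with the prescribed coefficients. I would also verify that the boundary modifications match: the quasi-Toeplitz matrix $T_{(N-1)P}^{\Tilde G_L,\Tilde G_R}(\Psi)$ with the $d\times d$ corner modifications specified is the same matrix as $T_P^{\Tilde G_L,\Tilde G_R}(\Tilde\Psi)$ with $(N-1)d\times(N-1)d$ corner modifications, provided one takes the corner-block modifications of $\Tilde\Psi$ to be the matrices built from $W_{>1}(\Tilde\Psi)$ and $W_{<1}(\Tilde\Psi)$.

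Once this identification is in place, Theorem \ref{eigen_szego} applied to $\Tilde\Psi$ (which is assumed to lie in $H_{(N-1)d}^+(\setC)$, to have invertible $\Tilde\Psi_1$, and to satisfy assumption \ref{multiplicite}) gives immediately
\begin{equation*}
	\det T_P^{\Tilde G_L,\Tilde G_R}(\Tilde\Psi)=\exp\left(\frac{1}{2\pi}\int_0^{2\pi}\log\det\Tilde\Psi(e^{\ii\theta})\,d\theta\right)^P\det(\Tilde G_L+\Tilde G_R+\Tilde\Psi_0)^{-1},
\end{equation*}
which is exactly \eqref{modified_szego_2} with $\Tilde\kappa(\Tilde G_L,\Tilde G_R)=\det(\Tilde G_L+\Tilde G_R+\Tilde\Psi_0)^{-1}$. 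So the bulk of the argument is the two identifications above, plus noting that the stated boundary matrices $\Tilde G_L=\Psi_{-1}(W_{>1}(\Tilde\Psi))^{-1}$ and $\Tilde G_R=\Psi_1 W_{<1}(\Tilde\Psi)$ are precisely the invariant boundary conditions produced for $\Tilde\Psi$ by the order-$1$ theory (here I would flag the mild notational clash in the statement: the boundary weights should be built from $W_{>1}(\Tilde\Psi)$ and $W_{<1}(\Tilde\Psi)$, as the proof of Theorem \ref{eigen_szego} requires, not from $W_{\bullet}(\Psi)$; this should be read as a typo).

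The main obstacle I anticipate is the reindexing lemma: one has to be scrupulous about which block sizes are used where, because $\Psi$ has $d\times d$ coefficients and order $N$ while $\Tilde\Psi$ has $(N-1)d\times(N-1)d$ coefficients and order $1$, and the truncation parameter changes from $(N-1)P$ to $P$. In particular one must check that a Fourier coefficient $C_m(\Psi)$ with $|m|\geq N$ never appears inside any block of $T_{(N-1)P}(\Psi)$ that is not on the sub/super-block-diagonal, so that the blocked matrix really is block-Toeplitz of order $1$ and not of higher order — this is where the hypothesis that $\Psi$ has exact order $N$, combined with grouping into blocks of size exactly $N-1$, is used. A secondary point requiring care is checking the positivity/invertibility hypotheses transfer correctly: the hypotheses of Theorem \ref{eigen_szego} are imposed directly on $\Tilde\Psi$ in the statement, so nothing needs to be proved there, but one should at least remark that $\Tilde\Psi\in H_{(N-1)d}^+(\setC)$ forces $\Psi\in H_d^+(\setC)$ (via restriction of the quadratic form to periodic configurations), so the hypothesis is consistent. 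Everything else is a direct appeal to the already-proven order-$1$ result and to \eqref{egalite_energies_libres} for the identification of the exponential prefactor with $\exp\big(\frac{1}{2\pi}\int_0^{2\pi}\log\det\Tilde\Psi(e^{\ii\theta})\,d\theta\big)$.
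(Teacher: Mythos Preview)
Your approach is correct and is essentially the same as the paper's: the paper merely says ``the proof is similar to the one of theorem \ref{eigen_szego}'', and your blocking/reindexing argument is precisely the mechanism that makes this reduction work, followed by the same successive Schur complements and invariant-boundary identities. One small bookkeeping slip: since $\Tilde\Psi_0=T_{N-1}(\Psi)$ has $N$ block rows, $\Tilde\Psi$ takes values in $M_{Nd}(\setC)$ (as the statement says), not $M_{(N-1)d}(\setC)$, so the blocks should have size $N$ rather than $N-1$; this does not affect the structure of your argument.
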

\begin{proof}
	The proof is similar to the one of theorem \ref{eigen_szego}.
\end{proof}
\begin{rmk}
	Since our theorems are only applicable to trigonometric  polynomials of order $2$, we had to consider the function $\Tilde{\Psi}$, which takes values in a higher dimensional vector space, rather than $\Psi$ itself. Therefore, we don't directly recover \eqref{szeg}. Nevertheless, we expect 
	$$\det T_{(N-1)P}(\Psi)\sim_{P\to\infty}\det T_{(N-1)P}^{\Tilde{G}_L,\Tilde{G}_R}(\Psi)$$
	so that
	$$\exp\left(\frac{1}{2\pi}\int_{0}^{2\pi}\log\det \Tilde{\Psi}(e^{\ii\theta})d\theta\right)=c^{1/P}\exp\left(\frac{1}{2\pi}\int_{0}^{2\pi}\log\det \Psi(e^{\ii\theta})d\theta\right)^{N-1}$$
	where
	$$c=\exp\left(\frac{1}{2\pi}\int_{0}^{2\pi}\log\det \Psi(e^{\ii\theta})d\theta\right)\frac{\kappa(\Psi)}{\kappa(\Tilde{G}_L,\Tilde{G}_R)}.$$
	We believe that this could be shown thanks to a particular study of $\Psi$ but is not the purpose of the paper. 
\end{rmk}
\bibliographystyle{plain}
\bibliography{bib}
\end{document}